\documentclass[12pt]{amsart}

\usepackage{fullpage,url,amssymb,enumerate,colonequals}
\usepackage{mathrsfs}
\usepackage[section]{placeins}
\usepackage{MnSymbol}
\usepackage{extarrows}
\usepackage{lscape}
\usepackage[all,cmtip]{xy}

\usepackage{makecell}

\usepackage[OT2,T1]{fontenc}
\usepackage{color}

\usepackage[
        colorlinks, citecolor=darkgreen,
        backref,
        pdfauthor={Nuno Freitas},
]{hyperref}

\usepackage{comment}
\usepackage{multirow}

\numberwithin{equation}{section}

\newtheorem{lemma}[equation]{Lemma}
\newtheorem{theorem}[equation]{Theorem}
\newtheorem{proposition}[equation]{Proposition}

\newtheorem{corollary}[equation]{Corollary}
\newtheorem{conj}[equation]{Conjecture}

\newtheorem{claim*}{Claim}

\theoremstyle{definition}
\newtheorem{remark}[equation]{Remark}

\newtheorem{question}[equation]{Question}

\definecolor{darkgreen}{rgb}{0,0.5,0}
\definecolor{rem}{rgb}{0.8,0,0}
\definecolor{new}{rgb}{0.3,0.1,0.9}
\definecolor{reply}{rgb}{0,0,0.8}
\definecolor{gray}{gray}{0.7}


\renewcommand{\det}{\text{det}}
\renewcommand{\gcd}{\text{gcd}}
\newcommand{\F}{\mathbb{F}}

\newcommand{\Q}{\mathbb{Q}}
\newcommand{\R}{\mathbb{R}}
\newcommand{\Z}{\mathbb{Z}}
\newcommand{\rhobar}{{\overline{\rho}}}

\newcommand{\calO}{\mathcal{O}}
\newcommand{\fp}{\mathfrak{p}}
\newcommand{\Xp}{X_E^{-}(p)}

\newcommand{\Qbar}{{\overline{\Q}}}
\newcommand{\Gal}{\text{Gal}}
\newcommand{\Frob}{\text{Frob}}
\newcommand{\Qun}{\mathbb Q_\ell^{\text{un}}}
\newcommand{\Knr}{K_{\text{nr}}}


\newcommand{\GL}{\text{GL}}
\newcommand{\SL}{\text{SL}}

\newcommand{\Ebar}{{\overline{E}}}
\newcommand{\Fbar}{{\overline{F}}}
\newcommand{\Id}{\text{Id}}
\newcommand{\Aut}{\text{Aut}}
\newcommand{\id}{\text{id}}
\renewcommand{\ker}{\text{ker}\:}
\newcommand{\loccit}{{\it loc. cit.}}


\DeclareMathOperator{\End}{End}

\DeclareMathOperator{\Norm}{ Norm}

\DeclareMathOperator{\un}{un}

\newcommand{\oE}{\overline{E}}


\newcommand{\cC}{\mathcal{C}}

 \newcommand{\cO}{\mathcal{O}}


\newcommand{\LMFDBE}[1]{\href{https://www.lmfdb.org/EllipticCurve/Q/#1}{\textsf{#1}}}
\newcommand{\LMFDBL}[1]{\href{https://www.lmfdb.org/padicField/#1}{\textsf{#1}}}


\setlength{\parindent}{0mm}
\setlength{\parskip}{1ex plus 0.5ex}
\setcounter{tocdepth}{1}

\begin{document}

\title {Local points on twists of $X(p)$ with applications}
\author{Nuno Freitas}
\address{Instituto de Ciencias Matemáticas (ICMAT),
          Nicolás Cabrera 13-15
         28049 Madrid, Spain}
\email{nunob.freitas@icmat.es}

\author{Diana Mocanu}
\address{Max Planck Institute for Mathematics,
          Vivatsgasse 7, 
         53111 Bonn, Germany
         }
\email{d.mocanu@mpim-bonn.mpg.de}

\thanks{Freitas was partly supported by the PID 2022-136944NB-I00 grant of the MICINN
(Spain)}

\keywords{Weil pairing, symplectic  isomorphisms, modular curves}
\subjclass[2010]{}

\begin{abstract}
Let $E/\mathbb Q$ be an elliptic curve and $p \geq 3$ a prime.
The modular curve~$X_E^-(p)$ parametrizes elliptic curves with $p$-torsion modules anti-symplectically isomorphic to~$E[p]$. We give a complete classification of when $X_E^-(p)(\mathbb Q_\ell)$ is non-empty, for all primes $\ell\neq p$; our result also includes $\ell=p$ in most cases when $E$ is semistable at~$p$.

We give two different applications. First, we classify CM curves $E/\Q$ where the modular curve $X_E^-(p)$ is a counterexample to the Hasse principle for infinitely many~$p$. Assuming the Frey--Mazur conjecture, we prove that for at least $60\%$ of rational elliptic curves $E$, the modular curve $X_E^-(p)$ is a counterexample to the Hasse principle for at least $50\%$ of primes~$p$. Secondly, we introduce a new technique to the elimination stage of the modular method and apply it to show that $x^3+y^3=5^\alpha z^p$ has no non-trivial primitive solutions for various primes $p$ satisfying $(\alpha/p)=-1$.
Moreover, as a by-product of our work, we simplify the assumptions of several local symplectic criteria due to the first author and Alain Kraus.

\end{abstract}

\maketitle

\section{Introduction}

Let $E/\Q$ be an elliptic curve and $p \geq 3$ a prime. We consider the modular curves
$X_E^+(p)$ and~$X^-_E(p)$ whose non-cuspidal $\Q$-points parametrize pairs $(E', \phi)$ where $E'/\Q$ is an elliptic curve and $\phi : E[p] \to E'[p]$ is a symplectic (respectively anti-symplectic) isomorphism of $G_\Q$-modules.
Note that $X_E^+(p)(\Q)$ always contains the canonical point $(E, \id_{E[p]})$, while $\Xp(\Q)$ may be empty. It is well known that $\Xp$ always has rational $\Q$-points for $p=3$ and $p=5$
(see Theorem~\ref{thm:p=3}), so we assume $p \geq 7$.
We are answering the following question in almost full generality.
\begin{question}\label{Q}
Given an elliptic curve $E/\mathbb{Q}$ and a prime $p\geq 7$, when does~$X_E^-(p)$ have points over every completion of $\mathbb{Q}$?
\end{question}

This question fits naturally in the broader quest of studying local points on twists of modular curves. For example, \"Ozman \cite{ekin, ekin2} studied local points on the twists $X^d(N)$ of the modular curve $X_0(N)$ whose $\Q$-points are identified with
the $\Q(\sqrt{d})$-rational points of $X_0(N)$ that are fixed by $\sigma \circ \omega_N$ where $\sigma$ is the non-trivial element of $\Gal(\Q(\sqrt{d})/ \Q)$ and~$\omega_N$ is the Atkin-Lehner involution. The recent work of Lorenzo and Vullers~\cite{LorVul} gives twists
of~$X(7)$ that are counterexamples to the Hasse principle; moreover, assuming Conjecture 5.12 in {\it loc.cit}, they show that there are infinitely many such counterexamples. We will extend their results 
to~$X(p)$ for infinitely many primes $p$, as summarized in Theorem~\ref{thm:conterHasseCM} and Corollary~\ref{cor:bound}.

Another motivation for Question~\ref{Q} is its applications to Diophantine equations.
In Section~\ref{sec:33p}, we explain how our main results allow us to introduce a new technique in the elimination step of the modular method, and derive the following Diophantine statement.
\begin{theorem}\label{thm:33p}
Let $\alpha \in \Z_{>0}$. The Fermat-type equation
$$x^3+y^3=5^\alpha z^p$$
has no non-trivial primitive integer solutions whenever $(\alpha / p ) = -1$ and
    \[
    p\in \{167, 383, 503, 599, 647, 719, 743, 839, 863, 887, 911, 983\}.
    \]
\end{theorem}
Moreover, in \cite{23n} the first author,  together with Naskr{\k e}cki, and Stoll
reduce the study of the Fermat-type equation $x^2 + y^3 = z^p$ to finding rational points on a list of $X_E^{\pm}(p)$ for seven specific elliptic curves~$E$. Existing methods are often impractical for finding $\Q$-points on these curves, and it is natural to first eliminate curves lacking local points.
An application of the methods in this paper to this problem is ongoing work of the authors together with Ignasi S\'anchez-Rodr\'iguez, and the results will be described elsewhere.

Our main results yield a complete answer to Question~\ref{Q} for all completions different from~$\Q_p$ and covers $\Q_p$ in most cases when $E/\Q_p$ is semistable. 
Before stating them we need to introduce some notation. An elliptic curve $E/\Q$ with potentially good reduction at $\ell$ has a semistability defect $e(E/\Q_\ell)$, which is the degree of the minimal field extension $L/\Qun$ over which $E$ acquires good reduction. It is well known that
$e := e(E/\Q_\ell) \in \{1, 2,3,4,6,8,12,24 \}$
and by the work of Kraus~\cite{Kraus} one can easily determine~$e$; note that $e=1$ is equivalent to $E/\Q_\ell$ having good reduction. Given a minimal model for $E/\Q_\ell$ of discriminant~$\Delta_m$, whenever $c_4 \neq 0$ or $c_6 \neq 0$,
we define $\tilde{c}_4$, $\tilde{c}_6$
and $\tilde{\Delta}$~by
\[
 c_4 = \ell^{v_\ell(c_4)} \tilde{c}_4, \qquad c_6 = \ell^{v_\ell(c_6)} \tilde{c}_6, \qquad \Delta_m = \ell^{v_\ell(\Delta_m)}\tilde{\Delta}.
\]
Moreover, for $E/\Q_\ell$ with good reduction, set $b_E:=[\End (E/\F_\ell): \Z[\pi]]$ and $\Delta_\ell := a_\ell(E) - 4\ell$, where~$\pi:=\sqrt{\Delta_\ell}$ and $a_\ell(E)$ is the trace of Frobenius at $\ell$.

\begin{theorem}\label{thm:MAIN}
    Let $E/\Q$ be an elliptic curve, $p\geq 7$ a prime and $K \neq \Q_p$ a completion of $\Q$.
    Then $X_E^-(p)(K)\neq \emptyset$ \textbf{unless} $K = \Q_\ell$, $E/\Q_\ell$ has potentially good reduction and 
    we are in one of the cases in Table~\ref{table:main}.
\begin{table}[h!]
    \centering 
    \begin{tabular}{|c||c|p{8cm}|}
        \hline
        $e$ & $\ell$ & Additional conditions \\ \hline
        $1$  & $\ell< p^2/16$ &
        $p \equiv 3 \pmod{4}$,
        $-p\Delta_\ell$ is a square in $\Z$,\\
        & & $p\mid \Delta_\ell$ and $p \nmid b_E$, and \\
        & & if $q\neq \ell$ is prime and $q \mid \Delta_\ell$ then $(q/p)\neq -1$ \\
        \hline
        $2$ & $\ell< p^2/16$ & \makecell[l]{
            Reduce to case $e=1$, by replacing $E$ with $E^u$, \\
            where $u$ is given by Lemmas~3, 4 in~\cite{symplectic}
        }  \\ \hline
        $3,\:6$ &
        \makecell[l]{
            $\ell \equiv 2 \pmod{3}$\\
            $\ell = 3$
        } &
         \makecell[l]{
           $(3/p)=1$ and $(\ell/p)=1$\\
            $(3/p)=1$ and $\tilde{\Delta} \equiv 2 \pmod 3$
        }  \\ \hline
        $4$ &  \makecell[l]{
            $\ell \equiv 3 \pmod{4}$\\
            $\ell = 2$
        } &   \makecell[l]{
           $(2/p)=1$ and $(\ell/p)=1$\\
            $(2/p)=1$ and $\tilde{c}_4 \equiv 5\tilde{\Delta}\pmod{8}$
        } \\ \hline
        $8,\:24$ & $\ell=2$ & $(2/p)=1$\\ \hline
        $12$ & $\ell=3$ & $(3/p)=1$ \\
        \hline
    \end{tabular}
    \vspace{3mm}
    \caption{All cases of $\Xp(K)=\emptyset$, with $K \neq \Q_p$ a completion of $\Q$.}\label{table:main}
\end{table}
\newpage
\end{theorem}
\begin{theorem}\label{thm:l=p}
    Suppose that $E/\Q$ is an elliptic curve and $p$ is a prime of 
    \begin{enumerate}
        \item[(i)] potentially multiplicative reduction; or
        \item[(ii)] good reduction with $a_p(E)\neq 0$ if $p \equiv 7 \pmod 8$.
    \end{enumerate}
    Then $\Xp(\Q_p)\neq \emptyset$.
\end{theorem}
There are two main ingredients to our proofs. On the one hand, by Corollary~\ref{cor:isogeny}, a natural source of points on~$\Xp(\Q_\ell)$ is isogenies of $E$ with degree that is a non-square modulo~$p$. When we are unable to find such isogenies, we perform a detailed study of the $p$-torsion
module of $E/\Q_\ell$. Indeed, when $\Q_\ell(E[p])/\Q_\ell$ is a non-abelian extension we find that for $e \in \{3,4\} $ there is only one isomorphism class of $G_{\Q_\ell}$-modules~$E[p]$
unless $(\ell,e)= (2,4)$ in which case there are two possibilities; for $e \in \{8,12,24\}$, we show that, up to a quadratic twist, $E[p]$ is determined by the action of (the non-abelian) inertia, for which work of the first author together with Demb\'el\'e and Voight~\cite{inertial} gives a full set of possibilities. To complete the proof, we give examples of elliptic curves that give a $p$-torsion isomorphism in each case and prove this isomorphism is anti-symplectic using the criteria given in~\cite{symplectic} by the first author and Kraus.
Whenever $\Q_\ell(E[p])/\Q_\ell$ is an abelian extension, it follows from results in {\it loc. cit.} that one can always find an anti-symplectic automorphism of $E[p]$ in all the cases of interest.

We highlight that, as a by-product of our approach, we 
are able to remove unnecessary assumptions in some of the local symplectic criteria in~\cite{symplectic}; see Section~\ref{sec:revisiting} for details.

The following is an immediate consequence of our main theorem.

\begin{corollary}\label{cor:semistable}
Let $E/\Q$ be a semistable elliptic curve and $p \equiv 1 \pmod{4}$ a prime. Then $\Xp$ has local points everywhere.
\end{corollary}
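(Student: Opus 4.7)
The plan is to check, place by place, that $\Xp(\Q_v) \neq \emptyset$ for every completion $v$ of $\Q$, each time invoking one of the preceding theorems. At the archimedean place the claim is immediate from Theorem~\ref{thm:mainNotConditional}, which asserts $\Xp(\R) \neq \emptyset$ unconditionally.

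For a finite prime $\ell$, I would split into the two cases allowed by semistability. If the reduction is multiplicative at $\ell$ (in particular at $\ell = p$, should multiplicative reduction occur there), then Theorem~\ref{thm:mainNotConditional}(1) directly yields $\Xp(\Q_\ell) \neq \emptyset$; note that no restriction of the form $\ell \neq p$ appears in that clause, and the main theorems remain valid at $\ell = p$ precisely because $E$ is semistable at $p$. If instead the reduction is good at $\ell$, I would apply Theorem~\ref{thm:mainConditional}(1), which is the one clause that explicitly permits $\ell = p$, so the same invocation handles both $\ell \neq p$ and $\ell = p$ uniformly.

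The only scenario in which Theorem~\ref{thm:mainConditional}(1) fails to guarantee a local point requires $p \equiv 3 \pmod{4}$. Our hypothesis $p \equiv 1 \pmod{4}$ is tailored to rule this out, so the problematic sub-case never occurs. Consequently, the corollary is a short bookkeeping consequence of the two main theorems; there is no genuine obstacle, and the only subtlety worth flagging is that the semistability assumption at $p$ is exactly what licenses applying those theorems when $\ell = p$.
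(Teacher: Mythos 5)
Your proof is correct and follows essentially the same route as the paper: bad primes are multiplicative (by semistability) and handled by Theorem~\ref{thm:mainNotConditional}(1), good primes by Theorem~\ref{thm:mainConditional}(1) with $p \equiv 1 \pmod 4$ ruling out the exceptional sub-case, and both clauses permit $\ell = p$. No gaps.
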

This immediately raises the question of when is $\Xp$   a counterexample to the Hasse principle. In this direction, we will prove the following in Section~\ref{sec:Hasse}.
\begin{theorem}\label{thm:conterHasseCM}
Let $K=\Q(\sqrt{D})$ where $D \in \{-11,-19,-43,-67,-163 \}$. Let also $E/\Q$ be an elliptic curve with CM by~$K$. If $p > 7$ is a prime such that $p \equiv 5 \pmod{8}$ and $(D / p) = 1$, then $\Xp$ is a counterexample to the Hasse principle.
\end{theorem}
This theorem is optimal in the sense that $\Xp$ is not a counterexample to the Hasse principle for all CM curves $E/\Q$ not covered by the theorem (see Remark~\ref{rem:CM}).

We will also show, conditional to the Frey--Mazur conjecture (see Conjecture~\ref{conj:FM}),
that there are infinitely many elliptic curves $E$ such that $\Xp$ is a
counterexample to the Hasse principle for infinitely many~$p$. In particular, we will prove.
\begin{corollary} \label{cor:Frey-MazurSimple} Assume the Frey--Mazur Conjecture.
   Let $p \equiv 1 \pmod{4}$ be a prime greater than 17.
   Then, for all semistable elliptic curves $E/\Q$ without $\Q$-isogenies, the curve $X_E^-(p)$ is a counterexample to the Hasse principle.
\end{corollary}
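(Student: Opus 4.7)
The proof plan is to combine Corollary~\ref{cor:semistable} with the Frey--Mazur Conjecture (Conjecture~\ref{conj:FM}). Since $E/\Q$ is semistable and $p \equiv 1 \pmod 4$, Corollary~\ref{cor:semistable} immediately gives $\Xp(\Q_v) \neq \emptyset$ for every completion $v$ of $\Q$, so it suffices to prove $\Xp(\Q) = \emptyset$.

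Suppose for contradiction that $\Xp$ has a non-cuspidal $\Q$-rational point, corresponding to a pair $(E',\phi)$ where $\phi\colon E[p] \to E'[p]$ is an anti-symplectic isomorphism of $G_\Q$-modules. Then $\rhobar_{E,p} \cong \rhobar_{E',p}$ as $G_\Q$-representations, so the Frey--Mazur Conjecture (applicable because $p > 17$) forces $E$ and $E'$ to be $\Q$-isogenous. Because $E$ has no nontrivial $\Q$-isogenies by hypothesis, $E' \cong_\Q E$, and composing $\phi$ with the $p$-torsion isomorphism induced by a $\Q$-isomorphism $E' \cong E$ yields an anti-symplectic, $G_\Q$-equivariant automorphism $\psi$ of $E[p]$.

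I would then observe that the hypotheses rule out such a $\psi$. The absence of $\Q$-isogenies implies $E$ is non-CM and $\rhobar_{E,p}$ is irreducible; combined with semistability and $p > 17$, standard results on the image of Galois (notably Mazur's classification of rational isogenies, together with the elimination of Cartan normalizer images in this range) ensure $\rhobar_{E,p}$ is absolutely irreducible. By Schur's lemma, $\End_{G_\Q}(E[p]) = \F_p \cdot \Id$, so $\psi = \lambda \cdot \Id$ for some $\lambda \in \F_p^*$. The Weil pairing transforms as $e_p(\lambda P, \lambda Q) = e_p(P,Q)^{\lambda^2}$, so the multiplier of $\psi$ is a square in $\F_p^*$; that is, $\psi$ is symplectic, contradicting the anti-symplectic property of $\phi$. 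The most delicate step is the absolute irreducibility claim for $\rhobar_{E,p}$ under the stated hypotheses; once granted, the remainder is a short formal deduction.
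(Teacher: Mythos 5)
Your overall strategy is exactly the paper's: local solvability from Corollary~\ref{cor:semistable}, then global non-solvability via Frey--Mazur, the absence of isogenies, and Schur's lemma applied to an anti-symplectic automorphism of $E[p]$ (the paper packages this last step as Lemma~\ref{lem:irred} and deduces the corollary from Theorem~\ref{thm:Frey-Mazur}(1)). However, the step you yourself flag as delicate is justified incorrectly. You propose to get absolute irreducibility of $\rhobar_{E,p}$ from ``the elimination of Cartan normalizer images in this range.'' That elimination is not a theorem: for the normalizer of a \emph{non-split} Cartan it is precisely the open part of Serre's uniformity conjecture, which this very paper records as Conjecture~\ref{conj:nonsplit} and is careful \emph{not} to assume in the unconditional statements. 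Moreover, even the known split-Cartan results would be irrelevant here, since having image inside a Cartan \emph{normalizer} does not obstruct absolute irreducibility. As written, your justification of the key step rests on an unproved conjecture that is not among the corollary's hypotheses.

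Fortunately the step is true for a much softer reason, and this is what Lemma~\ref{lem:irred} uses: $\rhobar_{E,p}$ is irreducible (a nontrivial $G_\Q$-stable line would give a $p$-isogeny, excluded by hypothesis) and \emph{odd}, and an odd irreducible two-dimensional mod~$p$ representation with $p$ odd is automatically absolutely irreducible --- if it were irreducible but not absolutely irreducible its image would lie in a non-split Cartan subgroup, forcing complex conjugation to be $\pm\Id$, contradicting $\det\rhobar_{E,p}(c)=-1$. With that repair, Schur's lemma gives $\End_{G_\Q}(E[p])=\F_p\cdot\Id$, scalars are symplectic, and your contradiction goes through. One further small omission: before assuming your rational point is non-cuspidal you should rule out rational cusps; this is Proposition~\ref{prop:cuspsQ}, which applies since $p>7$.
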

In view of the above, it is natural to wonder how often $\Xp$ is a counterexample to the Hasse principle.
Recall that for an elliptic curve $E$ given by a Weierstrass equation with integer coefficients $\mathbf{a} = (a_1, a_2, a_3, a_4, a_6) \in \Z^5$ one defines the na\"ive height of $E$ by 
\[
\text{ht}(E):= \text{ht}(\mathbf{a})=\max_i|a_i|^{1/i}.
\]
The work of Cremona and Sadek~\cite[Theorem 1.1]{Cresad} shows that when ordered by height, just over $60\%$ of elliptic curves are semistable. Moreover, an immediate consequence of Duke~\cite[Theorem 1]{Duke} shows that the same ordering gives that $0\%$ of all elliptic curves possess $\Q$-isogenies. Together with Corollary~\ref{cor:Frey-MazurSimple}, this yields the following.

\begin{corollary}\label{cor:bound} 
Assume the Frey--Mazur conjecture.
When ordered by height, 
for at least $60\%$ of elliptic curves $E$, the modular curve $\Xp$ is a counterexample to the Hasse principle for at least $50\%$ of primes~$p$.
\end{corollary}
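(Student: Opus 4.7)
The plan is to combine Corollary~\ref{cor:Frey-MazurSimple} with the two density statements of Cremona--Sadek~\cite{Cresad} and Duke~\cite{Duke} already quoted just above the statement. The argument is essentially a routine accounting of two densities in different variables (elliptic curves ordered by na\"ive height on the one hand, primes on the other), so the main task is simply to separate these two and verify that neither is diluted by the other.

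First I would isolate the good set of curves: let $\mathcal{E}_{\mathrm{good}}$ denote the set of elliptic curves $E/\Q$ that are semistable and admit no nontrivial $\Q$-isogeny. By \cite[Theorem 1.1]{Cresad} the density of semistable curves ordered by na\"ive height is strictly greater than $60\%$, and by \cite[Theorem 1]{Duke} the density of elliptic curves admitting a $\Q$-isogeny is $0\%$. Subtracting, the set $\mathcal{E}_{\mathrm{good}}$ still has lower density at least $60\%$ among all elliptic curves ordered by height.

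Next, fix an arbitrary $E \in \mathcal{E}_{\mathrm{good}}$. By Corollary~\ref{cor:Frey-MazurSimple}, conditional on the Frey--Mazur conjecture, $X_E^-(p)$ is a counterexample to the Hasse principle for every prime $p > 17$ with $p \equiv 1 \pmod{4}$. By Dirichlet's theorem, the natural density of primes $\equiv 1 \pmod 4$ inside the set of all primes equals $1/2$, and removing the finitely many $p \le 17$ does not affect this density. Hence for every $E \in \mathcal{E}_{\mathrm{good}}$, at least $50\%$ of primes $p$ yield an $X_E^-(p)$ that violates the Hasse principle, which is exactly the claim of the corollary.

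There is really no obstacle in this argument; it is purely a packaging of results previously established in the paper and in the literature. The one point where one must be slightly careful is to note that Corollary~\ref{cor:Frey-MazurSimple} applies \emph{uniformly} to every curve in $\mathcal{E}_{\mathrm{good}}$, with no further compatibility hypothesis between $E$ and $p$; this is what allows the quantifiers over curves and over primes to decouple and yield the ``at least $60\%$ of $E$, for at least $50\%$ of $p$'' conclusion.
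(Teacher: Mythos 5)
Your argument is correct and is exactly the paper's (implicit) proof: the corollary is deduced by intersecting the Cremona--Sadek density of semistable curves with the complement of Duke's density-zero set of curves with $\Q$-isogenies, and then applying Corollary~\ref{cor:Frey-MazurSimple} uniformly to each such curve together with the density $1/2$ of primes $p\equiv 1\pmod 4$. No further comment is needed.
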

\subsection{Computational software}
For the computations needed in this paper, we used {\tt Magma} computer algebra system \cite{magma}, version V2.28-20.
All our
code is available at \cite{git}, and the repository instructions explain how it is used.
\subsection{Acknowledgements} 
We are grateful to Samir Siksek for useful comments at
the beginning of this work. We thank Maarten Derickx and Filip Najmann for telling us about Proposition~\ref{prop:cuspsQ} and Elie Studina for Proposition~\ref{prop:elie}. We are thankful to Ignasi S\'anchez-Rodr\'igues for helpful discussions related to Proposition~\ref{prop:good2}.
We also thank John Cremona, Alain Kraus, Ariel Pacetti for helpful discussions and remarks. 
The authors completed this work at the Max Planck Institute for Mathematics in Bonn and appreciate its hospitality and financial support.
\section{Notation} \label{s:notation}

In this work, $\ell$ and $p \geq 3$ are always primes.

Let $\F_\ell$ be the finite field with $\ell$ elements.

For a field $F$, we write $\overline{F}$ for an algebraic
closure and $G_F := \Gal(\overline{F}/F)$ for its absolute Galois group. For an extension $F/\Q_\ell$, we write $I_F$ for the inertia subgroup of $G_F$. We denote by $F^{\un} \subset \overline{F}$
the maximal unramified extension of $F$. Note that $F^{\un}$ is the field fixed by~$I_F$. Let $\Frob_F \in G_F$ denote a lift of Frobenius. 
Moreover, we denote by $F_{\text{nr}}$ the maximal unramified subextension of $F$. 
When $F= \Q_\ell$ we simplify the previous notations to $I_\ell$ and $\Frob_\ell$, respectively.

For $E$ an elliptic curve defined over any field~$F$, we write $E[p]$ for its $p$-torsion
$G_F$-module and $\rhobar_{E,p} : G_F \to \Aut(E[p])$ for the corresponding Galois representation. We have $\det\: \rhobar_{E,p} = \chi_p$ the mod~$p$ cyclotomic character.
We denote the conductor of $E$ by~$N_E$.

For an elliptic curve $E/\Q_\ell$,
let $\Delta_m: = \Delta_m(E)$ denote the discriminant of a minimal Weierstrass model of $E$.
Given a minimal model for $E/\Q_\ell$, whenever $c_4 \neq 0$ or $c_6 \neq 0$,
we define the quantities $\tilde{c}_4$, $\tilde{c}_6$
and $\tilde{\Delta}$~by
\[
 c_4 = \ell^{v_\ell(c_4)} \tilde{c}_4, \qquad c_6 = \ell^{v_\ell(c_6)} \tilde{c}_6, \qquad \Delta_m = \ell^{v_\ell(\Delta_m)}\tilde{\Delta}.
\]
Given an elliptic curve $E/\Q_\ell$ with potentially good reduction,
and an odd prime $q\neq \ell$, we let $L:=\Qun(E[q])$ and $\Phi:=\Gal(L/\Qun)$.
We call $L$ the {\it inertial field} of $E$ and we note that it is independent of the choice of $q\neq \ell$.
The field $L$ is the minimal extension of $\Qun$ where
$E$ obtains good reduction.
We denote by $e = e(E)$ the order of $\Phi$ which is called the {\it semistability defect of} $E$. 
The curve $E/\Q_\ell$ has good reduction if and only if $e=1$. When $e \neq 1$, it is well known (see \cite{Kraus}) that
either $\Phi$ is cyclic of order $2,3,4,6$; or $\ell=3$ and $\Phi\cong C_3 \rtimes C_4$ is of order $12$; or $\ell=2$ and $\Phi \cong \SL_2(\F_3)$ or $\Phi \cong Q_8$,  where $Q_8$ is the quaternion group.
In the presence of two elliptic curves
$E/\Q_\ell$ and $E'/\Q_\ell$ we adapt all the
notation accordingly, namely, we will write $E'[p]$, $e'$, $c_4'$, $\tilde{c}_4'$, $c_6'$, $\tilde{c}_6'$
and $\Delta_m'$, $\tilde{\Delta}'$.

Let $v_\ell$ denote the $\ell$-adic valuation in $\Q_\ell$ satisfying $v_\ell(\ell)=1$.

For any $a \in \Z$ we will write $(a/p)$ for the Legendre symbol.

For all $p$ we will denote by $\Id$ the identity element in $\GL_2(\F_p)$ and by $\# A$ the order of an element $A$ in $\GL_2(\F_p)$. For any matrix $M\in M_{n,m}(K)$, for $m,n\geq 1$ and $K$ a field, we denote by $M^T$ the transpose of $M$.

For $A$ and $B$ subgroups of a group $G$ we write $A \cdot B$ for the smallest subgroup of $G$ containing both $A$ and $B$.

\section{Background on \texorpdfstring{$X(p)$}{} and \texorpdfstring{$X_E^\pm (p)$}{}}
\label{sec:background}

\subsection{Symplectic isomorphims}
Let $p \geq 3$ be a prime. Let $K$ be a field of characteristic different from $p$.
Fix a primitive $p$-th root of unity $\zeta_p \in \overline{K}$.
For $E/K$ an elliptic curve, we write~$e_{E,p}$ for the Weil pairing on~$E[p]$.
We say that an $\F_p$-basis $(P,Q)$ of $E[p]$ is \emph{symplectic} if
$e_{E,p}(P,Q) = \zeta_p$.

Now let $E /K$ and $E'/K$ be two elliptic curves and
let $\phi : E[p] \to E'[p]$ be an isomorphism of $G_K$-modules.
Then there is an element $r(\phi) \in \F_p^\times$ such that
\[ e_{E',p}(\phi(P), \phi(Q)) = e_{E,p}(P, Q)^{r(\phi)} \quad \text{for all $P, Q \in E[p]$.} \]
Note that for any $a \in \F_p^\times$ we have $r(a\phi) = a^2 r(\phi)$.
So up to scaling~$\phi$, only the class of~$r(\phi)$ modulo squares matters.
We say that $\phi$ is a \textit{symplectic isomorphism} if
$r(\phi)$ is a square in~$\F_p^\times$, and an \textit{anti-symplectic isomorphism}
if $r(\phi)$ is a non-square.
Fix a non-square~$r_p \in \F_p^\times$.
We say that $\phi$ is \emph{strictly symplectic}, if $r(\phi) = 1$, and
\emph{strictly anti-symplectic}, if $r(\phi) = r_p$.
Finally, we say that $E[p]$ and~$E'[p]$ are
\emph{symplectically} (or \emph{anti-symplectically}) \emph{isomorphic},
if there exists a symplectic (or anti-symplectic) isomorphism of~$G_K$-modules between them.
Note that $E[p]$ and~$E'[p]$ may be both symplectically
and anti-symplectically isomorphic; this will be the case if and only if
$E[p]$ admits an anti-symplectic automorphism.

Isogenies are a natural source of symplectic and anti-symplectic isomorphisms, as described in the following.

\begin{lemma}\label{lem:isogeny}
		Suppose there is a $K$-isogeny $\phi: E \to E'$ of degree $n$ coprime to $p$, so that $\phi$ restricts to an isomorphism $\phi|_{E[p]} : E[p] \to E'[p]$ of $G_K$-modules. Then, $\phi|_{E[p]}$ is symplectic if $(n/p) = 1$ and anti-symplectic otherwise.
	\end{lemma}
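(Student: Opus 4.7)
The plan is to compute $r(\phi|_{E[p]})$ directly from the behavior of the Weil pairing under isogenies, and then appeal to the definition of (anti-)symplectic.

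Recall the fundamental compatibility of the Weil pairing with isogenies: for any isogeny $\phi : E \to E'$ defined over $K$ with dual isogeny $\hat{\phi} : E' \to E$, and for any $P \in E[p]$, $Q' \in E'[p]$, one has
\[
e_{E,p}(P, \hat{\phi}(Q')) \;=\; e_{E',p}(\phi(P), Q').
\]
The key input I will use is $\hat{\phi} \circ \phi = [n]$, where $n = \deg \phi$. Since $\gcd(n,p) = 1$, multiplication-by-$n$ on $E[p]$ is an automorphism of $E[p]$, which is exactly what ensures that $\phi|_{E[p]}$ is an isomorphism of $G_K$-modules, as asserted in the statement.

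Taking $Q' = \phi(Q)$ for $Q \in E[p]$ in the compatibility formula above, and using that the Weil pairing satisfies $e_{E,p}(P, [n]Q) = e_{E,p}(P, Q)^n$, I obtain
\[
e_{E',p}(\phi(P), \phi(Q)) \;=\; e_{E,p}(P, \hat{\phi}(\phi(Q))) \;=\; e_{E,p}(P,[n]Q) \;=\; e_{E,p}(P,Q)^n.
\]
Comparing with the defining equation $e_{E',p}(\phi(P), \phi(Q)) = e_{E,p}(P,Q)^{r(\phi|_{E[p]})}$ and recalling that the Weil pairing on $E[p]$ takes values in $\mu_p$ (a cyclic group of order $p$), I conclude $r(\phi|_{E[p]}) \equiv n \pmod{p}$.

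By the definition recalled just before the lemma, $\phi|_{E[p]}$ is symplectic precisely when $r(\phi|_{E[p]})$ is a square in $\F_p^\times$, and anti-symplectic when it is a non-square. Since $r(\phi|_{E[p]}) \equiv n \pmod p$, this is equivalent to $(n/p) = 1$ versus $(n/p) = -1$, which is exactly the dichotomy stated. There is no real obstacle here: the whole content of the lemma is the standard identity $\hat\phi\circ\phi = [\deg\phi]$ combined with the bilinearity of the Weil pairing.
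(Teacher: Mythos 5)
Your argument is correct and complete: the compatibility $e_{E,p}(P,\hat{\phi}(Q')) = e_{E',p}(\phi(P),Q')$ together with $\hat{\phi}\circ\phi = [n]$ gives $r(\phi|_{E[p]}) \equiv n \pmod{p}$, which is exactly the dichotomy claimed. The paper itself only cites \cite[Corollary~1]{symplectic} for this lemma, and your computation is precisely the standard proof underlying that reference, so there is nothing to add.
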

  \begin{proof} This is \cite[Corollary~1]{symplectic}.
  \end{proof}

\subsection{The curve \texorpdfstring{$X(p)$}{}}\label{sec:Xp}
Let $K$ be a field of characteristic $0$.

The modular curve $X(p)$ is a smooth projective, geometrically irreducible curve defined
over~$\Q$ with the following property. It is the compactification of the
modular curve~$Y(p)$ whose $K$-points classify
(equivalence classes of) pairs $(E', \phi)$ such that $E'/K$ is an elliptic curve and
$\phi : \Z/p\Z \times \mu_p \to E'[p]$ is a strictly symplectic isomorphism,
where $\Z/p\Z \times \mu_p$ is viewed as a $K$-group scheme with the standard pairing
$((a,\zeta),(c,\xi)) \to \xi^a \zeta^{-c}$; see~\cite[\S 1.5.3.4]{Halberstadt} for details. 

The set $\cC$ of cusps of $X(p)$ has $\frac{1}{2}(p^2-1)$ elements and can be identified with 
\[\cC \cong \left\{\begin{pmatrix} u \\ v \end{pmatrix} \in (\Z/p\Z)^2 : (u, v) \neq (0,0) \right\} / \{ \pm 1 \};\]
see for example \cite[\S2.8.2]{Halberstadt}.
Moreover, Th\'eor\`eme 13 in \cite[p. 302]{Halberstadt} describes the Galois action of an element $\sigma \in G_\Q$ on $\cC$  as 
\begin{equation}\label{E:actionOnC}
  \begin{pmatrix} u \\ v \end{pmatrix}^\sigma = \begin{pmatrix}
				1 & 0 \\
				0 & \chi_p(\sigma)^{-1}
			\end{pmatrix}\begin{pmatrix} u \\ v \end{pmatrix}.
\end{equation}

\subsection{The twists \texorpdfstring{$X_E^\pm(p)$}{}}
Let $E/\Q$ be an elliptic curve, and $K$ a field of characteristic 0.

There is a smooth projective curve $X_E^+(p)$ defined over $\Q$ which is the twist of $X(p)$ with the following property. Let $\cC^+$ be the set of cups of~$X_E^+(p)$. The $K$-points
of $Y_E^+(p) := X_E^+(p) \setminus \cC^+$
parametrize (equivalence classes of) pairs $(E', \phi)$ consisting of an elliptic curve~$E'/K$
and a strictly symplectic isomorphism $\phi : E[p] \to E'[p]$ of $G_K$-modules;
see~\cite[\S 5.3]{Halberstadt} for details.

There is also a smooth projective curve $X_E^-(p)$ over $\Q$ which is a twist of $X(p)$ with the following properties. Let $t  : X_E^-(p) \to X(p)$ be the $\Qbar$-isomorphism defined
in \cite[p.459]{Halberstadt}. The set of cups of~$X_E^-(p)$ is
$\cC^- = t^{-1}(\cC)$ and the $K$-points of $Y_E^-(p) := X_E^-(p) \setminus \cC^-$
parametrize (equivalence classes of) pairs $(E', \phi)$ consisting of an elliptic curve~$E'/K$
and a strictly anti-symplectic isomorphism $\phi : E[p] \to E'[p]$ of $G_K$-modules;
see~\cite[\S 5.4]{Halberstadt} for details. Moreover, in \cite[p. 459]{Halberstadt} 
the Galois action on the cusps is described as 
\begin{equation}\label{E:cuspAction}
  t(P^{\sigma^{-1}}) = \tilde{g}_\sigma(t(P))^{\sigma^{-1}},
\end{equation}
for all $\sigma \in G_\Q$ and $P \in \cC^-$. Here $\tilde{g}_\sigma \in \Aut(X(p))$ is induced by
 $g_\sigma \in \SL_2(\F_p)$ given by 
\[
 g_\sigma = \begin{pmatrix}
				1 & 0 \\
				0 & \chi_p(\sigma)^{-1} r_p^{-1}
			\end{pmatrix} \rhobar_{E,p}^T(\sigma)
			 \begin{pmatrix}
				1 & 0 \\
				0 &  r_p
			\end{pmatrix},
\]
where $r_p$ is a fixed non-square in $\F_p^\times$ and
$\rhobar_{E,p}(\sigma) \in \GL_2(\F_p) \simeq \GL(E[p])$ after fixing a symplectic basis of $E[p]$; see~\cite[p. 458]{Halberstadt} for details.

\begin{proposition}\label{prop:cuspsQ}
Let $E/\Q$ be an elliptic curve and $p \geq 3$ a prime.
Then $X_E^\pm(p)$ has a $\Q$-rational cusp if and only if $p\leq 7$ and $\rhobar_{E,p} \simeq
			\left(\begin{smallmatrix}
				\epsilon \chi_p & * \\
				0 & \epsilon
			\end{smallmatrix} \right) \subset \GL_2(\F_p)$, for some character $\epsilon: G_\Q \to \{\pm 1\}$.
\end{proposition}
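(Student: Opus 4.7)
The plan is to translate $\Q$-rationality of a cusp into an eigenvector condition on $\rhobar_{E,p}^T$ via \eqref{E:actionOnC} and \eqref{E:cuspAction}, read off the stated reducible form of $\rhobar_{E,p}$ from this condition, and then invoke Mazur's theorem on rational torsion to extract the bound $p\leq 7$.

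For $X_E^-(p)$, a cusp $P \in \cC^-$ is $\Q$-rational iff $t(P^{\sigma^{-1}}) = t(P)$ for all $\sigma \in G_\Q$. Writing $C := t(P) \in (\F_p)^2/\{\pm 1\}$ and combining \eqref{E:cuspAction} with \eqref{E:actionOnC}, this reads $g_\sigma C \equiv \text{diag}(1, \chi_p(\sigma)^{-1}) C \pmod{\pm 1}$. Substituting the given formula for $g_\sigma$ and left-multiplying by $\text{diag}(1, \chi_p(\sigma) r_p)$, the condition simplifies to $\rhobar_{E,p}^T(\sigma) C' = \epsilon(\sigma) C'$ for all $\sigma$, where $C' := \text{diag}(1, r_p) C$ and $\epsilon(\sigma) \in \{\pm 1\}$; a short check (using that $\{\pm 1\}$ is abelian) shows $\epsilon : G_\Q \to \{\pm 1\}$ is a character.

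From this eigenvector condition I get a $G_\Q$-equivariant surjection $E[p] \twoheadrightarrow \F_p(\epsilon)$ via $v \mapsto C'^T v$. Its kernel is a $G_\Q$-stable line with character $\chi_p/\epsilon = \epsilon \chi_p$ (using $\det \rhobar_{E,p} = \chi_p$ and $\epsilon^2 = 1$), yielding the claimed form of $\rhobar_{E,p}$. Quadratically twisting by $\epsilon$ gives $\rhobar_{E_\epsilon,p} \simeq \bigl(\begin{smallmatrix}\chi_p & * \\ 0 & 1\end{smallmatrix}\bigr)$, so $E_\epsilon$ has a $\Q$-rational point of order $p$, whence $p \leq 7$ by Mazur's theorem. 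Conversely, if $\rhobar_{E,p}$ has the stated form then in the adapted basis $(0,1)^T$ is an $\epsilon$-eigenvector of $\rhobar_{E,p}^T$, and reversing the above computation produces an explicit $\Q$-rational cusp.

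For $X_E^+(p)$ the argument is formally identical: by the construction in \cite[\S 5.3]{Halberstadt} the relevant Galois cocycle on cusps is the analogue of $g_\sigma$ with $r_p$ replaced by $1$, so the condition on $C$ becomes $\rhobar_{E,p}^T(\sigma) C = \epsilon(\sigma) C$ directly, and the rest applies verbatim. The main obstacle I anticipate is pinning down the precise shape of this cocycle for $X_E^+(p)$ against \cite{Halberstadt} since the present excerpt only makes \eqref{E:cuspAction} explicit in the $-$ case; once that is in place, the matrix manipulation and the appeal to Mazur are routine.
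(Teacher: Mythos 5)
Your proposal is correct and follows essentially the same route as the paper: translate rationality of a cusp through \eqref{E:cuspAction} and \eqref{E:actionOnC} into the eigenvector condition $\rhobar_{E,p}^T(\sigma)C'=\epsilon(\sigma)C'$, deduce the reducible shape of $\rhobar_{E,p}$ from $\det\rhobar_{E,p}=\chi_p$, and invoke Mazur to get $p\leq 7$, with the $+$ case handled by the analogous cocycle from \cite[\S 5.3]{Halberstadt}. The only nit is that the twisted form $\left(\begin{smallmatrix}\chi_p & *\\ 0 & 1\end{smallmatrix}\right)$ puts the trivial character on the quotient, so it is the curve $p$-isogenous to $E_\epsilon$ (the quotient by the stable line), rather than $E_\epsilon$ itself, that acquires a rational point of order $p$ --- exactly as the paper phrases it --- but Mazur's classification applies to that isogenous curve and the bound $p\leq 7$ follows all the same.
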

\begin{proof}
    We give a proof for $\Xp$. Suppose $P \in \cC^-$ is defined over $\Q$, that is $P^\sigma = P$ for all $\sigma \in G_\Q$. Equivalently, for all $\sigma \in G_\Q$, the equation \eqref{E:cuspAction} becomes
$$t(P) = \tilde{g}_\sigma(t(P))^{\sigma^{-1}}.$$
We denote $t(P) := (u,v)^T \in \cC$. From~\eqref{E:actionOnC} and the formula for $g_\sigma$, we rewrite the above as 
\[ \begin{pmatrix} u \\ v \end{pmatrix} =
 \left(\begin{matrix}
				1 & 0 \\
				0 & \chi_p(\sigma)
			\end{matrix}\right)
  \left(\begin{matrix}
				1 & 0 \\
				0 & \chi_p(\sigma)^{-1} r_p^{-1}
			\end{matrix}\right) \rhobar_{E,p}^T(\sigma)
			 \left(\begin{matrix}
				1 & 0 \\
				0 &  r_p
			\end{matrix}\right)\begin{pmatrix} u \\ v \end{pmatrix},
\]
which simplifies to
\[
			\begin{pmatrix} u \\ r_p v \end{pmatrix} =
             \rhobar_{E,p}^T(\sigma)
			  \begin{pmatrix} u \\ r_p v \end{pmatrix}.
\]
Since $(u,v)^T$ is defined modulo $\{ \pm 1 \}$, the previous equality means that there is a (possibly trivial) character $\epsilon : G_\Q \to \{\pm 1\}$ such that $\rhobar_{E,p}^T\otimes \epsilon$ has an eigenvector with eigenvalue $1$. Equivalently, there is a basis of $\F_p \times \F_p$ such that
$$\rhobar_{E,p}^T \otimes \epsilon \simeq
			\begin{pmatrix}
				1 & * \\
				0 & \chi_p
			\end{pmatrix} \subset \GL_2(\F_p),$$
because $\det\: \rhobar_{E,p} = \chi_p$. Now transposing and swapping the basis vectors yields $\rhobar_{E,p} \simeq
			\left(\begin{smallmatrix}
				\epsilon \chi_p & * \\
				0 & \epsilon
			\end{smallmatrix} \right)$, as desired.
This description for $\rhobar_{E,p}$ implies that $E$ has a quadratic twist that is $p$-isogenous to an elliptic curve
having a $p$-torsion point. Thus $p\leq 7$ by Mazur's classification of torsion subgroups for rational elliptic curves.

The proof for $X_E^+(p)$ is identical, where $t$, \eqref{E:cuspAction} and~$g_\sigma$ are replaced by the analogous quantities and relations defined in \cite[\S 5.3.2]{Halberstadt}.
\end{proof}

\begin{remark}
    We note that for any $\ell$-adic field $K$, one has that $X_E^\pm(p)(K)=\emptyset$ if and only if $Y_E^\pm(p)(K)=\emptyset$. Indeed, for $K$ an $\ell$-adic field and $X$ a smooth curve with a $K$-point $P$, then $X$ contains an $\ell$-adic analytic disk around $P$, so, in particular, it will have uncountably many $K$-points. The conclusion now follows from the fact that the set of cusps $\cC^\pm$ is finite.
\end{remark}

Note that taking quadratic twists by~$u \in K$ of the pairs $(E', \phi)$
induces canonical isomorphisms $X^+_{E^{u}}(p) \simeq X^+_E(p)$ and
$X^-_{E^{u}}(p) \simeq X^-_E(p)$. In particular, the following lemma allows us to study Question~\ref{Q} up to twisting $E$.

\begin{lemma}\label{lem:qtwists}
Let $u \in K$ be a non-square and $E^{u} / K$ be the quadratic twist of~$E$ by~$u$.
Then $X_E^{-}(p)(K)\neq \emptyset$ if and only if $X_{E^{u}}^{-}(p)(K) \neq \emptyset$.
 \end{lemma}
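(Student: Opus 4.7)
The cleanest route is to argue directly by transporting a $K$-rational point from one twist to the other, since the paper has already announced that the two twists are canonically isomorphic over~$\Q$.

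First I would set up the twisting isomorphism. Let $\psi : E \to E^{u}$ denote the canonical isomorphism defined over $L := K(\sqrt{u})$, and let $\chi_u : G_K \to \{\pm 1\}$ be the quadratic character cutting out $L/K$. The defining property of the twist is the cocycle relation
\[
 \psi^{\sigma} = \chi_u(\sigma)\,\psi \qquad \text{for all } \sigma \in G_K,
\]
and $\psi$ (being an isomorphism of elliptic curves) preserves the Weil pairing on $p$-torsion, i.e.\ $e_{E^{u},p}(\psi P,\psi Q) = e_{E,p}(P,Q)$.

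Next, assume $X_E^{-}(p)(K) \neq \emptyset$, represented by a pair $(E',\phi)$ with $\phi : E[p] \to E'[p]$ a strictly anti-symplectic isomorphism of $G_K$-modules. I would then propose the candidate point on $X_{E^{u}}^{-}(p)$ given by
\[
 \bigl(E'^{\,u},\; \psi' \circ \phi \circ \psi^{-1}\bigr),
\]
where $\psi' : E' \to E'^{\,u}$ is the analogous twisting isomorphism. Two things must be checked. For $G_K$-equivariance, the cocycle relation gives $(\psi')^{\sigma} = \chi_u(\sigma)\psi'$ and $(\psi^{-1})^{\sigma} = \chi_u(\sigma)\psi^{-1}$, while $\phi^{\sigma} = \phi$ because $\phi$ is already $G_K$-equivariant; the two factors of $\chi_u(\sigma)$ cancel since $\chi_u$ is quadratic, so the composition is $G_K$-equivariant. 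For the pairing, $\psi$ and $\psi'$ preserve Weil pairings and $\phi$ scales it by $r_p$, so the composition scales it by $r_p$, hence is strictly anti-symplectic. This produces the desired $K$-point on $X_{E^{u}}^{-}(p)$.

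Finally I would note that the converse is formally identical: apply the same construction starting from a pair $(E'',\phi') \in X_{E^{u}}^{-}(p)(K)$ with the twist by~$u$ of~$E^{u}$, using the canonical identification $(E^{u})^{u} \cong E$ (as $u^{2}$ is a square in $K^{\times}$). There is no real obstacle here; the only minor bookkeeping is keeping track of the quadratic cocycle so that the two factors of $\chi_u(\sigma)$ cancel, and keeping track of whether the intermediate isomorphisms preserve the Weil pairing strictly or only up to a square, which they do strictly since they come from isomorphisms of elliptic curves.
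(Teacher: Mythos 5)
Your argument is correct and is essentially the intended one: the paper itself only cites \cite[Lemma~11]{symplectic} here, and the construction you give --- conjugating $\phi$ by the twisting isomorphisms $\psi,\psi'$, with the two factors of $\chi_u(\sigma)$ cancelling and the degree-one isomorphisms preserving the Weil pairing --- is exactly the ``canonical isomorphism $X^-_{E^u}(p)\simeq X^-_E(p)$ induced by twisting pairs'' that the paper alludes to in the sentence immediately preceding the lemma.

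One small loose end: your map transports points of $Y_E^-(p)(K)$, but the statement concerns $X_E^-(p)(K)$, which could in principle be nonempty only because of a $K$-rational cusp (this can genuinely happen for $p\le 7$ over number fields, cf.\ Proposition~\ref{prop:cuspsQ}; for $\ell$-adic $K$ the paper notes $X=\emptyset$ iff $Y=\emptyset$, so there it is harmless). The fix is immediate: your assignment $(E',\phi)\mapsto (E'^{\,u},\psi'\circ\phi\circ\psi^{-1})$ is precisely the canonical $K$-isomorphism of the open modular curves, which extends to a $K$-isomorphism of the smooth projective models and hence carries cusps to cusps as well; with that remark added, all $K$-points transfer and the proof is complete.
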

 \begin{proof} This is \cite[Lemma~11]{symplectic}.
  \end{proof}

Note that $X_E^+(p)(\Q)$ always contains the canonical point $(E, \id_{E[p]})$, while $\Xp(\Q)$ may be empty. More generally, from Lemma~\ref{lem:isogeny}, if $E'$ is isogenous to~$E$ by an isogeny~$\phi$ of degree~$n$ coprime to~$p$, then $(E', \phi|_{E'[p]})$ gives rise to a rational point on~$X^+_E(p)$ when $(n/p) = 1$ and on~$X^-_E(p)$ when $(n/p) = -1$.
We highlight the following case for easy future reference.

\begin{corollary}\label{cor:isogeny}
    Suppose that $E/K$ has a $K$-isogeny of prime degree $q$ such that $(q/p)=-1$. Then $\Xp(K)\neq \emptyset$.
\end{corollary}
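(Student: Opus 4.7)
The plan is to deduce this directly from Lemma~\ref{lem:isogeny}, which is the main ingredient already assembled. First I would note that the hypothesis $(q/p)=-1$ forces $q\neq p$ (since the Legendre symbol $(p/p)=0$), so in particular $q$ is coprime to $p$. Let $\phi : E \to E'$ denote the given $K$-isogeny of degree $q$. Because $\deg \phi = q$ is coprime to~$p$, the restriction $\phi|_{E[p]} : E[p] \to E'[p]$ is an isomorphism of $G_K$-modules (the kernel of $\phi$ has order prime to $p$, so it meets $E[p]$ trivially).

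Next I would apply Lemma~\ref{lem:isogeny}: since $(q/p)=-1$, the isomorphism $\phi|_{E[p]}$ is anti-symplectic, meaning $r(\phi|_{E[p]})$ is a non-square in $\F_p^\times$. To produce an actual $K$-rational point of $Y_E^-(p)$, I need a \emph{strictly} anti-symplectic isomorphism, i.e.\ one with $r$-invariant equal to the fixed non-square $r_p$. Using $r(a\phi|_{E[p]}) = a^2\,r(\phi|_{E[p]})$, I can choose $a \in \F_p^\times$ with $a^2 = r_p/r(\phi|_{E[p]})$; such an $a$ exists because the quotient of two non-squares is a square. Scaling by $a$ is $\F_p$-linear and hence commutes with the Galois action, so $a\cdot \phi|_{E[p]}$ is still a $G_K$-equivariant isomorphism, now strictly anti-symplectic.

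Finally, the pair $\bigl(E', a\cdot \phi|_{E[p]}\bigr)$, being defined over $K$, provides a $K$-rational point on the moduli problem underlying $Y_E^-(p) \subset X_E^-(p)$, so $X_E^-(p)(K)\neq \emptyset$. There is no real obstacle in the argument; the only step that requires any care is the scaling to pass from ``anti-symplectic'' to ``strictly anti-symplectic,'' and that is purely a fact about non-squares in $\F_p^\times$.
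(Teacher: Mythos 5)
Your argument is correct and is essentially the paper's own: the corollary is stated as an immediate consequence of Lemma~\ref{lem:isogeny}, since an isogeny of degree $q$ coprime to $p$ restricts to a $G_K$-isomorphism on $p$-torsion which is anti-symplectic when $(q/p)=-1$. Your extra step of rescaling by $a$ with $a^2 = r_p/r(\phi|_{E[p]})$ to obtain a \emph{strictly} anti-symplectic isomorphism is a valid and careful point that the paper leaves implicit in its moduli description.
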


\begin{proposition}\label{prop:elie}
Let $E/\Q$ be an elliptic curve, and $p$ a prime.
The curves $X(p)$ and $\Xp$ have good reduction at all primes $\ell \neq p$ and $\ell \nmid pN_E$, respectively, and are absolutely irreducible over $\F_\ell$. Moreover, if $\oE$ denotes the special fiber $E/\F_\ell$, then the special fiber $X_E^-(p)/\F_\ell$ is $\F_\ell$-isomorphic to $X_{\oE}^-(p)/\F_\ell$.    
\end{proposition}
\begin{proof}
These are consequences of \cite[Corollary 1 and Theorem 3]{elie} with $G$ taken to be the group scheme $E[p]$.
\end{proof}

\begin{corollary}\label{rmk}
Let $E/\Q$ be an elliptic curve and $\ell \nmid pN_E$ a prime. Then $X_E^-(p)(\Q_\ell) = \emptyset$ if and only if
for all pairs $(E',\phi)$, where $E'/\F_\ell$ is an elliptic curve and $\phi: \oE[p] \to
E'[p]$ is a $G_{\F_\ell}$-isomorphism, we have that $\phi$ is symplectic.

In particular, $X_{{E}}^-(p)(\Q_\ell) \neq \emptyset$ if and only if $X_{\oE}^-(p)(\F_\ell) \neq \emptyset$. 
\end{corollary}
\begin{proof}
Using Proposition~\ref{prop:elie} and $\ell \nmid pN_E$, one gets that the curve $X_E^-(p)$ has good reduction at~$\ell$; furthermore, by Hensel's lifting (e.g. \cite[Lemma 1.1.]{Jordan}), if $X_{\overline{E}}^-(p)(\F_\ell) \neq \emptyset$ then $X_E^-(p)(\Q_\ell) \neq \emptyset$.
The reduction morphism $(E/\Q_\ell)[p] \to \oE[p]$ at primes~$\ell$ of good reduction is a Galois equivariant isomorphism and preserves the Weil pairing, hence
$X_E^-(p)(\Q_\ell) \neq \emptyset$ implies $X_{\overline{E}}^-(p)(\F_\ell) \neq \emptyset$.
We conclude that $X_E^-(p)(\Q_\ell) = \emptyset$ if and only if $X_{\overline{E}}^-(p)(\F_\ell) = \emptyset$ and
the latter holds if and only if for all pairs $(E',\phi)$, where $E'/\F_\ell$ is an elliptic curve and $\phi: \oE[p] \to
E'[p]$ is a $G_{\F_\ell}$-isomorphism, we have that $\phi$ is symplectic.
\end{proof}

\section{Preliminary results}

\subsection{The case \texorpdfstring{$p=3,5$}{}}\label{sec:p=3,5}
Recall that $X(p)$ has genus zero for $p=3,5$.

\begin{theorem}\label{thm:p=3}
Let $K$ be any field of characteristic $0$ and $E/K$ be an elliptic curve.
If $p=3$ or $p=5$, then $X_E^{-}(p)(K)\neq \emptyset$.
\end{theorem}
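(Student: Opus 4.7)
The aim is to exhibit a non-cuspidal $K$-rational point of $X_E^-(p)$ by producing an elliptic curve $E'/K$ together with a strictly anti-symplectic $G_K$-isomorphism $\phi:E[p]\to E'[p]$. Because $X(p)$ has genus zero for $p\in\{3,5\}$, the twist $X_E^-(p)$ is a smooth conic over $K$, so a single $K$-point suffices to conclude (and in fact forces $X_E^-(p)\cong_K\mathbb{P}^1$). One therefore does not have to understand the whole moduli space: any one specialisation of a suitable parametric family of pairs $(E',\phi)$ will do.

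For both primes, such pairs $(E',\phi)$ are supplied by classical one-parameter families of elliptic curves with prescribed mod-$p$ Galois representation. For $p=3$ this is essentially the Hesse pencil, and for $p=5$ one uses the Rubin--Silverberg family descending from the icosahedral realisation of $X(5)\cong\mathbb{P}^1$. In each case the literature gives an explicit rational map $\mathbb{P}^1_K\dashrightarrow X_E^-(p)$ defined over $K$; choosing any $K$-rational value of the parameter outside a finite exceptional locus yields the desired $E'$ and $\phi$, and hence the $K$-point of $X_E^-(p)$.

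The only delicate verification is that the chosen parametrisation targets the \emph{anti}-symplectic twist $X_E^-(p)$ rather than the symplectic one $X_E^+(p)$, since the invariant $r(\phi_t)\in\F_p^\times/(\F_p^\times)^2$ is locally constant on a connected family. This can be settled by a direct Weil-pairing computation on the explicit family; equivalently, by Lemma~\ref{lem:isogeny} it suffices to exhibit a $K$-isogeny of degree a non-square modulo $p$ (i.e.\ of degree $\equiv 2\pmod 3$ when $p=3$, or of degree $\equiv 2,3\pmod 5$ when $p=5$) between two members of the family. Both routes are worked out in the references cited in the introduction, so the result indeed \emph{follows easily from previous literature}, as announced.
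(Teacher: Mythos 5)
Your proposal is correct and matches the paper's argument: the paper likewise observes that $X_E^-(p)$ has genus $0$ for $p\in\{3,5\}$ and concludes by citing explicit parametrizations $\mathbb{P}^1(K)\to X_E^-(p)(K)$ from the literature (Fisher's work on the Hessian for $p=3$ and on twists of $X(5)$ for $p=5$, which are exactly the modern anti-symplectic versions of the Hesse-pencil and Rubin--Silverberg families you invoke). The one point worth noting is that the delicate symplectic-versus-anti-symplectic verification you defer to the references is indeed already carried out there, since Fisher parametrizes both twists separately.
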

\begin{proof}
Since $\Xp$ is a twist of $X(p)$, its genus is 0 for $p=3,5$.
An explicit parametrization $\mathbb P^1(K)\to   X_E^{-}(p)(K)$ for $p=3$ can be found in \cite[\S 13]{fisher} and for $p=5$ in \cite[Theorem 5.8]{fisherp5}. In particular, $X_E^{-}(p)(K)\neq \emptyset$.
\end{proof}

\subsection{Real Points} \label{sectionreal}
In this section, we use the uniformization of real elliptic curves to guarantee the existence of real degree~$q$ isogenies for any prime $q$.
	\begin{theorem}\label{thm:real}
		Let $E/\R$ be an elliptic curve and $p \geq 3$ a prime. Then
		$\Xp(\R)\neq \emptyset$.
	\end{theorem}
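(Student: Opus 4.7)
The plan is to apply Corollary~\ref{cor:isogeny} by exhibiting, for every prime $p \geq 3$, a real isogeny of $E$ of some prime degree $q \neq p$ with $(q/p) = -1$. The key input is the real uniformization: as a real Lie group, $E(\R)$ is isomorphic to either $\R/\Z$ or $\R/\Z \times \Z/2\Z$, according to whether $E(\R)$ is connected or has two connected components.

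The first step is to observe that for every prime $q$, $E(\R)$ contains a subgroup isomorphic to $\Z/q\Z$. Indeed, the factor $\R/\Z$ already has $q$-torsion equal to $\tfrac{1}{q}\Z/\Z \cong \Z/q\Z$ for every prime $q$. Any such subgroup $H \subset E(\R) \subset E(\C)$ consists of $\R$-rational points, so $H$ is stable under $\Gal(\C/\R)$, and the quotient isogeny $\phi\colon E \to E/H$ is therefore defined over $\R$ and has degree $q$.

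The second step is to choose a prime $q \neq p$ with $(q/p) = -1$. Exactly half of the non-zero residues modulo $p$ are non-squares, and by Dirichlet's theorem on primes in arithmetic progressions each such residue class contains infinitely many primes; picking any such prime gives the desired $q$ (and $q \neq p$ is automatic, since $p$ is not coprime to $p$). With $H \subset E(\R)[q]$ of order $q$ as above, the isogeny $\phi\colon E \to E/H$ is a real $q$-isogeny and Corollary~\ref{cor:isogeny} yields $\Xp(\R) \neq \emptyset$.

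There is no real obstacle in this argument once the real uniformization is in hand; the only point that requires a moment's thought is that the quadratic non-residue classes modulo $p$ contain primes, and this follows from either Dirichlet's theorem or an elementary search among small primes.
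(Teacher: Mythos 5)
Your proof is correct and follows essentially the same route as the paper: use the real uniformization $E(\R)\cong \R/\Z$ or $\R/\Z\times\Z/2\Z$ to produce a $\Gal(\C/\R)$-stable subgroup of order $q$ for a prime $q$ with $(q/p)=-1$, and conclude via Corollary~\ref{cor:isogeny}. The only difference is that you explicitly justify the existence of such a $q$ (via Dirichlet), which the paper leaves implicit.
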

	\begin{proof}
	From ~\cite[Ch V, Cor~2.3.1]{silverman}, there is an isomorphism of  real Lie groups
		\begin{equation*}
			E(\R)\cong
			\begin{cases}
				\R/\Z, \text{ if }\Delta(E)<0,\\
				\R/\Z \times \Z/2\Z, \text{ if } \Delta(E)>0.
			\end{cases}
		\end{equation*}
		where $\Delta(E)$ is the discriminant of some model of~$E$.
Let $q>2$ be a prime such that $(q/p)=-1$. It follows that $\frac{1}{q}\Z/\Z$ is a group of order $q$ in $E(\R)$ invariant under the action of $\Gal(\mathbb{C}/\R)$
so it gives rise to a degree~$q$ isogeny defined over~$\R$. Hence $\Xp(\R) \neq \emptyset$ by Corollary~\ref{cor:isogeny}.
	\end{proof}

\subsection{Potentially multiplicative reduction}
Let $\ell$ be a prime and $E/\Q_\ell$ an elliptic curve with potentially multiplicative reduction.
Similar to the real case, such elliptic curves have an $\ell$-adic uniformization, ensuring the existence $\Q_\ell$-isogenies of any prime degree~$q$.
\begin{theorem}\label{thm:multiplicative}
	Let $\ell$ and $p \geq 3$ be two (not necessarily different) primes.
	Let $E/\mathbb{Q}_\ell$ be an elliptic curve with potentially multiplicative reduction. Then $X_E^{-}(p)(\Q_\ell)\neq \emptyset$.
	\end{theorem}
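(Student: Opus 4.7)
The plan is to mimic the proof of Theorem~\ref{thm:real}: exhibit a $\Q_\ell$-rational isogeny from $E$ of prime degree $q$ with $(q/p)=-1$ and then apply Corollary~\ref{cor:isogeny}. The role of the real uniformization in Theorem~\ref{thm:real} will be played here by Tate's $\ell$-adic uniformization.

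First, I would reduce to the case of split multiplicative reduction. Since $E/\Q_\ell$ has potentially multiplicative reduction, its $j$-invariant satisfies $v_\ell(j(E))<0$, so there is a (possibly ramified) quadratic twist $E^{u}$ of $E$ having split multiplicative reduction. By Lemma~\ref{lem:qtwists}, $\Xp(\Q_\ell)\neq\emptyset$ if and only if $X_{E^{u}}^{-}(p)(\Q_\ell)\neq\emptyset$, so I may assume $E$ itself has split multiplicative reduction.

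Next, Tate's theorem provides a parameter $q_E\in\Q_\ell^\times$ with $v_\ell(q_E)>0$ together with a $G_{\Q_\ell}$-equivariant isomorphism
\[
E(\overline{\Q_\ell}) \;\cong\; \overline{\Q_\ell}^{\,\times} / q_E^{\Z}.
\]
Since $q_E$ has positive valuation, no nontrivial power of $q_E$ is a root of unity, hence $\mu_q \cap q_E^{\Z} = \{1\}$ for every prime $q$, and $\mu_q$ embeds into the right-hand side. Its image is a $G_{\Q_\ell}$-stable cyclic subgroup $C\subset E[q]$ of order~$q$, and the quotient isogeny $E \to E/C$ is therefore defined over $\Q_\ell$ and has degree~$q$.

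Finally, I would choose a prime $q\neq p$ with $(q/p)=-1$; such primes exist in abundance (e.g.\ by Dirichlet's theorem applied to any non-residue class modulo $p$). Applying Corollary~\ref{cor:isogeny} to the isogeny from the previous step then yields $\Xp(\Q_\ell)\neq\emptyset$. I do not anticipate any significant obstacle: once the reduction to split multiplicative reduction is in place, Tate's uniformization makes the proof a direct $\ell$-adic analogue of Theorem~\ref{thm:real}.
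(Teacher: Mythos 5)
Your proposal is correct and follows essentially the same route as the paper: both use the theory of the Tate curve to produce a $\Q_\ell$-rational isogeny of any prime degree $q$, then pick $q\neq p$ with $(q/p)=-1$ and invoke Corollary~\ref{cor:isogeny}. The only cosmetic difference is that you first reduce to split multiplicative reduction via Lemma~\ref{lem:qtwists}, whereas the paper skips this step by noting that the mod-$q$ representation of the potentially multiplicative curve is already reducible (being a quadratic twist of the Tate-curve representation).
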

	\begin{proof}
	By the theory of the Tate curve, we know that for all primes $q$ we have
	\begin{equation*}
			\rhobar_{E,q} \simeq
			\begin{pmatrix}
				\epsilon \chi_q & * \\
				0 & \epsilon
			\end{pmatrix} \subset \GL_2(\F_q),
		\end{equation*}
where $\epsilon$ is a quadratic character and $\chi_q$ is the mod~$q$ cyclotomic character. The image of $\rhobar_{E,q}$ can be conjugated to fit in the Borel subgroup of~$\GL_2(\F_q)$, thus there is a degree~$q$ isogeny
 $\phi : E/\Q_\ell \to E'/ \Q_\ell$. By taking a prime $q \neq p$ such that $(q/p) = -1$ the result follows from Corollary~\ref{cor:isogeny}.
\end{proof}

\subsection{Locally abelian Galois image}\label{sec:ab}
In~\cite{symplectic}, it is shown that if the Galois image of $\rhobar_{E,p}$ is abelian, any matrix within this image with a non-square determinant gives an anti-symplectic automorphism $\phi$ of $E[p]$, and hence a point $(E,\phi)\in\Xp(\Q_\ell)$.
\begin{theorem}
\label{thm:abelian}
Let $\ell$ and~$p \geq 7$ be different primes. Let $E/\Q_\ell$ an elliptic curve such that $G:=\rhobar_{E,p}(G_{\Q_\ell})$ is abelian. If $E/\Q_\ell$ or a quadratic twist of~$E/\Q_\ell$ has good reduction, assume additionally that
$p \nmid \# G$.
Then $\Xp(\Q_\ell) \neq \emptyset$.
\end{theorem}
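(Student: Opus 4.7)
The plan is to construct a $\Q_\ell$-point on $Y_E^-(p) \subseteq \Xp$ by exhibiting a $G_{\Q_\ell}$-equivariant anti-symplectic automorphism $\phi$ of $E[p]$, so that $(E,\phi)$ is the desired point. Fix a symplectic basis, identifying $\Aut(E[p]) \cong \GL_2(\F_p)$. A standard bilinearity computation with the Weil pairing gives $e_{E,p}(\phi(P),\phi(Q)) = e_{E,p}(P,Q)^{\det\phi}$, so $\phi$ is anti-symplectic precisely when $\det\phi \notin (\F_p^\times)^2$, while $G_{\Q_\ell}$-equivariance is the condition $\phi \in C := C_{\GL_2(\F_p)}(G)$. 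Everything thus reduces to producing an element of $C$ with non-square determinant.

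The key step is the trichotomy for abelian subgroups $G \subseteq \GL_2(\F_p)$. Up to conjugation, either (a) $G$ lies in a split torus $T_s$; (b) $G$ lies in a non-split torus $T_n \cong \F_{p^2}^\times$; or (c) $G$ contains a non-scalar element conjugate to $\bigl(\begin{smallmatrix} a & 1 \\ 0 & a \end{smallmatrix}\bigr)$. In cases (a) and (b) the centralizer $C$ contains the ambient torus, and $\det$ restricted to $T_s$, or to $T_n$ (the latter being the surjective norm map $\F_{p^2}^\times \to \F_p^\times$), hits every element of $\F_p^\times$; so an element of $C$ with non-square determinant exists. In case (c), however, $C$ is contained in $\bigl\{\bigl(\begin{smallmatrix} x & y \\ 0 & x \end{smallmatrix}\bigr) : x \in \F_p^\times, y \in \F_p\bigr\}$, whose determinants are all squares, and the construction fails.

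It then remains to exclude case (c). Any element of the form $\bigl(\begin{smallmatrix} a & 1 \\ 0 & a \end{smallmatrix}\bigr)$ with $a \in \F_p^\times$ has order divisible by $p$, so case (c) forces $p \mid \#G$. When $E/\Q_\ell$ (or a quadratic twist) has good reduction, the stated hypothesis $p \nmid \#G$ directly rules this out. Otherwise, $E/\Q_\ell$ has potentially good additive reduction with $e := e(E/\Q_\ell) \geq 3$ (potentially multiplicative reduction being already handled by Theorem~\ref{thm:multiplicative}), so inertia acts through the semistability defect group $\Phi$. The abelianity of $G$ forces $\Phi$ to be cyclic of order in $\{3,4,6\}$, and for $p \geq 5$ this order is coprime to $p$; since $e \geq 3$ also prevents a scalar inertia action, $\rhobar_{E,p}(I_\ell)$ is a non-scalar semisimple element. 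Its centralizer is a torus, and $G$ is contained therein, placing us in case (a) or (b).

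The principal obstacle is the small prime $p = 3$ together with $|\Phi| \in \{3,6\}$, where $p$ can divide the order of the inertia image and a unipotent inertia action could a priori arise. This must be handled by directly inspecting the tame inertia action on $E[p]$ recorded in \cite{Kraus} (and the inertial types in \cite{inertial}) and checking that under the abelianity of $G$ the unipotent type never occurs; once this small check is in place, case (c) is fully excluded and the proof is complete.
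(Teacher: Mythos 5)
Your overall strategy is the same as the paper's: realize a point of $Y_E^-(p)(\Q_\ell)$ as $(E,\phi)$ with $\phi$ an anti-symplectic $G_{\Q_\ell}$-automorphism of $E[p]$, i.e.\ an element of $C_{\GL_2(\F_p)}(G)$ with non-square determinant, and show such an element exists unless $G$ contains a non-semisimple element (forcing $p \mid \#G$). The paper quotes Lemmas~6 and~8 of \cite{symplectic} for this; you rederive them correctly. Where you diverge is the exclusion of the bad case: the paper cites \cite[Proposition~3]{symplectic} (valid for $p \geq 7$) to conclude that a cyclic image of order divisible by $p$ forces good reduction up to quadratic twist, whereas you argue directly that for additive potentially good reduction the inertia image is cyclic of order $e \in \{3,4,6\}$, hence (being non-scalar, since its elements have determinant $\chi_p(I_\ell)=1$, and of order prime to $p$ for $p \geq 5$) semisimple, so $G$ lies in a torus. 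This is a clean, self-contained replacement that covers $p=5$ as well.

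The genuine gap is $p=3$, and your proposed repair cannot work. For $p=3$, $e\in\{3,6\}$ and $\ell \equiv 1 \pmod 3$ (so that $K/\Q_\ell$ is abelian by Theorem~\ref{thm:F}(1)), the inertia image is cyclic of order divisible by $3$ inside $\GL_2(\F_3)$; since $x^3-1=(x-1)^3$ in $\F_3[x]$, \emph{every} element of order $3$ in $\GL_2(\F_3)$ is non-semisimple, i.e.\ conjugate to $\left(\begin{smallmatrix}1&1\\0&1\end{smallmatrix}\right)$. So the unipotent type you hope to rule out by inspecting \cite{Kraus} genuinely occurs with $G$ abelian, $C_{\GL_2(\F_3)}(G)$ then has only square determinants, and no automorphism of $E[3]$ is anti-symplectic. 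The theorem is nevertheless true for $p=3$ (and $p=5$), but only because $X_E^-(p)$ has genus $0$ and is covered by Theorem~\ref{thm:p=3}; this is also how the paper's own proof should be read, since its final step explicitly assumes $p\geq 7$. Replace your closing paragraph by an appeal to Theorem~\ref{thm:p=3} for $p\in\{3,5\}$ (or restrict the centralizer argument to $p\geq 5$ and invoke Theorem~\ref{thm:p=3} for $p=3$) and the proof is complete.
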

\begin{proof}
By~\cite[Lemma~6]{symplectic}, if there exists $M \in C_{\GL_2(\F_p)}(G)$ whose determinant is not a square modulo~$p$, then $M$ gives rise to an anti-symplectic automorphism $\phi_M: E[p] \to E[p]$ of $G_\Q$-modules,
giving a point $(E,\phi_M) \in X_E^{-}(p)(\Q_\ell)$, as desired.

Assume by a contradiction that all matrices in $C_{\GL_2(\F_p)}(G)$ have square determinant, then $p \mid \#G$ by~\cite[Lemma~8]{symplectic}.
Now, since $p \geq 7$, it follows from \cite[Proposition 3]{symplectic} that either~$E/\Q_\ell$ or a quadratic twist of it has good reduction, contradicting the hypothesis.
\end{proof}

\subsection{Large primes of good reduction}
Recall from Proposition~\ref{prop:elie} that 
the curves $X_E^- (p)$ and $X(p)$ 
are absolutely irreducible over $\F_\ell$ when $\ell$ is a prime of good reduction; moreover, they 
have good reduction at 
all primes $\ell \nmid p N_E$ and $\ell \neq p$, respectively.
\begin{theorem}\label{thm:Hensel} Let $E/\Q$ be an elliptic curve.
    Let $p\geq 3$ be a prime and $g$ be the genus of $X(p)$.
    Suppose that $\ell>4g^2$ is a prime of good reduction for $X_E^{-}(p)$.
    Then $X_E^{-}(p)(\Q_\ell)\neq \emptyset$.
\end{theorem}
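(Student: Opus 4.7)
The plan is to apply the Weil bounds on the reduction of $X_E^-(p)$ modulo $\ell$, combined with Hensel lifting. Since by hypothesis $\ell$ is a prime of good reduction for $X_E^-(p)$, this modular curve extends to a smooth projective scheme $\mathcal{X}$ over $\Z_\ell$, whose special fibre $\mathcal{X}_{\F_\ell}$ is a smooth projective curve over $\F_\ell$. Moreover, as noted in the paragraph preceding the statement (citing \cite{elie}), $\mathcal{X}_{\F_\ell}$ is geometrically irreducible, and since $X_E^-(p)$ is a twist of $X(p)$, it has the same genus $g$.

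The first step is to invoke the Hasse--Weil bound for smooth projective geometrically irreducible curves of genus~$g$ over~$\F_\ell$, which yields
\[
  \#\mathcal{X}(\F_\ell) \;\geq\; \ell + 1 - 2g\sqrt{\ell}.
\]
The hypothesis $\ell > 4g^2$ gives $\sqrt{\ell} > 2g$, and therefore $2g\sqrt{\ell} < \ell$, so $\#\mathcal{X}(\F_\ell) \geq 2 > 0$. Hence there exists an $\F_\ell$-rational point $\bar{P} \in \mathcal{X}(\F_\ell)$.

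The second step is Hensel lifting: because $\mathcal{X}$ is smooth over $\Z_\ell$, every closed point of its special fibre is a smooth point of $\mathcal{X}$, so by the usual Hensel's lemma for smooth schemes over a complete discrete valuation ring, the point $\bar{P}$ lifts to a $\Z_\ell$-point of~$\mathcal{X}$. This $\Z_\ell$-point restricts to a $\Q_\ell$-point of the generic fibre $X_E^-(p)$, which proves $X_E^-(p)(\Q_\ell) \neq \emptyset$.

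There is no real obstacle here, as this is a standard application of the Weil bounds together with Hensel's lemma; the only substantive ingredient being imported from outside is the good-reduction and geometric irreducibility statement of \cite{elie} that was already recalled just above the theorem.
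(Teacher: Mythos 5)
Your proposal is correct and follows essentially the same route as the paper: the Hasse--Weil bound applied to the (geometrically irreducible, smooth) special fibre shows $\#X_E^-(p)(\F_\ell)>0$ because $\ell>4g^2$ forces $2g\sqrt{\ell}<\ell$, and Hensel's lemma (smoothness over $\Z_\ell$) lifts an $\F_\ell$-point to a $\Q_\ell$-point. No issues.
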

\begin{proof} For an absolutely irreducible smooth projective curve $C / \F_\ell$ of genus~$g$, recall the Hasse--Weil bound $|\#C(\F_\ell) - (\ell+1) | \leq 2g \sqrt{\ell}$. Let $C := X_E^{-}(p) / \F_\ell$ for $\ell$ a prime of good reduction for $X_E^{-}(p)$. If $\# C(\F_\ell) = 0$, then $\ell + 1 \leq 2g\sqrt{\ell}$ by the Hasse--Weil bound, a contradiction with $\ell > 4g^2$.
Thus $\# X_E^{-}(p)(\F_\ell) > 0$ and
we conclude by Hensel's Lemma (e.g. \cite[Lemma 1.1.]{Jordan})
that  $X_E^{-}(p)(\Q_\ell) \neq \emptyset$.
\end{proof}
\begin{remark}\label{rmk:genus}
    The genus $g$ of $X_E^{-}(p)$ is the same as that of $X(p)$ as they are twists, and it is given by $g=1+\frac{1}{24}(p^2-1)(p-6)$ (see~\cite[p. 77]{Halberstadt}).
\end{remark}
\section{The case of good reduction}\label{sec:goodred}

Let $\ell$ be a fixed prime and $k/\F_\ell$ be a finite field with $\#k=\ell^f$. Let $E/k$ be an elliptic curve. We denote by 
\[
a_k(E):=\ell^f+1-|E(k)|, \qquad \Delta_k:=a_k(E)^2-4\ell^f.
\]
The Hasse--Weil bound $|a_k(E)|\leq2\sqrt{\ell^f}$ gives~$\Delta_k \leq 0$.  
Whenever $k=\F_\ell$, and abbreviate notation to $a_{\ell}$, and $\Delta_{\ell}$. 

Suppose that $E/k$ is ordinary, that is, $a_k(E) \not\equiv 0 \pmod{\ell}$. Let $\pi:=\sqrt{\Delta_\ell}$ and $K :=\Q(\pi)$. The field $K$ is imaginary quadratic, and it is well known that $\End(E) \simeq \calO_{E}$ is an order in the ring of integers $\cO_K$ containing $\Z[\pi]$. Moreover, we set $b_E:=[\cO_E: \Z[\pi]]$ and hence
\begin{equation}\label{discriminats}
     4 \Delta_k = \Delta(\Z[\pi])=\Delta(\cO_E)b_E^2.
\end{equation}
Suppose now that $F/\Q_\ell$ is a finite extension with residue field $k$.
Let $E/F$ be an elliptic curve with good reduction and $q \neq \ell$ a prime. By the criterion of N\'eron–Ogg–Shafarevich, the inertia subgroup $I_F \subset G_F$ acts trivially on~$E[q]$. In particular, the representation
$\rhobar_{E,q}$
satisfies $$\rhobar_{E,q}(G_{F})= \langle \rhobar_{E,q}(\Frob_F) \rangle\subset \GL_2(\F_q).$$ Moreover, it is well known (e.g. \cite[IV,1.3.]{Serre1}) that the characteristic equation for $\rhobar_{E,q}(\Frob_F)$ is given by
\begin{equation}\label{Frobeq}
		t^2- a_F(E)t+\ell^f=0, \quad\text{where } a_F(E):=a_k(\Ebar/k),
\end{equation}
where $\Ebar/k$ is the reduction of a minimal model for $E/F$.
Let also $\Delta_F:=\Delta_k$ be the discriminant of this quadratic equation.
The main result of this section is the following theorem.
\begin{theorem}\label{thm:maingood}
Let $\ell$ and $p \geq 3$ be different primes.
Let $E/\Q_\ell$ be an elliptic curve with good reduction.
Then $X_E^-(p)(\Q_\ell)=\emptyset$ if and only if all of the following hold
\begin{enumerate}
\item $p \equiv 3 \pmod{4}$; \label{g1}
 \item $-p\Delta_\ell=s^2$ for some $s \in \Z$;\label{g2}
\item $p\mid \#\rhobar_{E,p}(\Frob_\ell)$ or, equivalently, $p\mid \Delta_\ell$ and $p\nmid b_E$;\label{g3}
\item  for all primes $q\neq \ell, \:q \mid \Delta_\ell \Rightarrow (q/p)\neq -1$; \label{g4}
\item $\ell< p^2/16$. \label{g5}
\end{enumerate} 
\end{theorem}
\begin{remark} \label{rmk:good}

 Condition \eqref{g2} implies that $\oE$ is ordinary. Indeed, $\ell\mid a_\ell$ if and only if $\ell \mid \Delta_\ell=a_\ell^2-4\ell$, in which \eqref{g2} gives $\ell^2\mid \Delta_\ell$. This in turn implies that $\ell^2\mid 4\ell$, a contradiction for $\ell \geq 3$, and so $a_\ell \not\equiv 0 \pmod{\ell}$, as desired.
    Moreover, if $\ell=2$ and $2\mid a_2$ then $a_2\in \{-2,0,2\}$ by the Hasse--Weil bound. In particular $\Delta_2 \in \{-4,-8\}$, contradicting \eqref{g2}.

\end{remark}
We start by describing when we can construct points on $\Xp(\Q_\ell)$
using isogenies (i.e. via Lemma~\ref{lem:isogeny}).
The following is a necessary and sufficient condition for $E$ to have a $\Q_\ell$-isogeny of degree $q$.
\begin{lemma}\label{good}
		Let $E/\Q_\ell$ be an elliptic curve with good reduction. Let $q \neq \ell$ be a prime.
		Then $E/\Q_\ell$ admits a $\Q_\ell$-isogeny of degree~$q$ if and only if $(\Delta_{\ell}/q)\in \{0, 1\}$.
	\end{lemma}
	\begin{proof}
		Suppose that $(\Delta_{\ell}/q)\in \{0, 1\}$. Then the (not necessarily distinct) eigenvalues $\lambda_1, \lambda_2$ of $\rhobar_{E,q}(\Frob_{\ell})$ belong to $\F_q$. Let $v_1$ be an eigenvector corresponding to $\lambda_1$ and consider a basis $\{v_1,v_2\}$ of $E[q]$.
		With respect to this basis, we have
		$$\rhobar_{E,q}(\Frob_{\ell})=\begin{pmatrix}
			\lambda_1 & * \\
			0 & *
		\end{pmatrix}$$
		and since $\rhobar_{E,q}(G_{\Q_\ell})= \langle \rhobar_{E,q}(\Frob_{\ell}) \rangle$, we conclude that $\rhobar_{E,q}(G_{\Q_\ell})$ is contained in the Borel subgroup of $\GL_2(\F_q)$,
		so $E$ admits a $\Q_\ell$-isogeny of degree~$q$. Conversely, if $E$ admits a $\Q_\ell$-isogeny of degree~$q \neq \ell$, then $\rhobar_{E,q}(G_{\Q_\ell})$ can be conjugated into a subgroup of the Borel group, in particular, the characteristic polynomial of
		$\rhobar_{E,q}(\Frob_{\ell})$ factors, that is $(\Delta_{\ell}/q)\in \{0, 1\}$.
\end{proof}
\begin{proposition}\label{prop:good1}
Let $\ell$ and $p \geq 3$ be two (not necessarily different) primes and $E/\Q_\ell$ an elliptic curve with good reduction.
Assume that at least one of the following holds
\begin{enumerate}
\item $p\equiv 1 \pmod 4$;
 \item $-p\Delta_\ell$ is not a square in $\Z$;
\item  there exists a prime $q\neq \ell$ such that $q\mid \Delta_\ell$ and $(q/p)=-1$.
    
\end{enumerate}
Then $X_E^{-}(p)(\Q_\ell)\neq \emptyset$.
\end{proposition}
\begin{proof}
Let $E/\Q_\ell$ be as in the statement.
Recall that $\Delta_{\ell}\leq 0$.
If $\Delta_\ell = 0$ then $(\Delta_\ell / q) = 0$ for all $q$ and any~$q \neq \ell$ satisfying $(q/p)=-1$ yields the desired conclusion via Corollary~\ref{cor:isogeny}. In this case, assumption (3) holds trivially.

Suppose that $\Delta_{\ell} < 0$.
We claim that under our assumptions, one can always find a prime $q \neq \ell$ such that $(\Delta_{\ell}/q) \in \{0,1\}$ and $(q/p)=-1$. Thus, by
Lemma~\ref{good}, we conclude that $E/\Q_\ell$ admits
a $\Q_\ell$-isogeny of degree~$q$ with $(q/p)=-1$ and the conclusion follows again from Corollary~\ref{cor:isogeny}.

For $p \neq q$ odd primes, we
recall the quadratic reciprocity laws
\[
 \left(\frac{p}{q}\right)=(-1)^{\frac{(p-1)(q-1)}{4}} \left(\frac{q}{p}\right) \quad \text{ and } \quad \left(\frac{-1}{q}\right) = (-1)^{\frac{q-1}{2}}.
\]

We will now prove the claim.

Suppose (1) holds, that is  $p \equiv 1 \pmod{4}$. We consider the Galois field $F=\Q(\sqrt{\Delta_{\ell}},\sqrt{p})$.

Since $\Delta_{\ell}/p<0$ the extension $F/\Q$ is of degree 4  and there is $\sigma \in \Gal(F/\Q)$ satisfying
\[ \sigma(\sqrt{\Delta_\ell}) = \sqrt{\Delta_\ell}
 \quad \text{ and } \quad \sigma(\sqrt{p}) = - \sqrt{p}.
\] 
By Chebotar\"ev's Density Theorem, there is a positive density of primes $q \nmid p\ell \Delta_\ell$ such that $\Frob_q$ acts on~$F/\Q$ as~$\sigma$, that is, $(\Delta_{\ell}/q)=1$ and $(p/q)=-1$; by quadratic reciprocity we also have $(q/p)= -1$, as desired.

Suppose now that $p \equiv 3 \pmod{4}$, that is, assumption (1) does not hold.

Consider the Galois field $F:=\Q(\sqrt{\Delta_{\ell}},\sqrt{p}, \sqrt{-1})$.
There are three possibilities:

(i) $F$ is of degree 8;

(ii) $F=\Q(\sqrt{p}, \sqrt{-1})$ and
$\Delta_\ell = - s^2$ with $s \in \Z$;

(iii) $F=\Q(\sqrt{p}, \sqrt{-1})$
and $p\Delta_\ell  = - s^2$ with $s \in \Z$.

Assume we are in case (i). As above, there is $\sigma \in \Gal(F/\Q)$ satisfying
\[ \sigma(\sqrt{\Delta_\ell}) = \sqrt{\Delta_\ell},
\quad \sigma(\sqrt{p}) = - \sqrt{p}  \quad \text{ and } \quad  \sigma(\sqrt{-1}) =   \sqrt{-1}.
\]
By Chebotar\"ev's Density Theorem, there is a positive density of primes $q \nmid p \ell \Delta_\ell$ such that $\Frob_q$ acts on~$F/\Q$ as~$\sigma$, that is, $(\Delta_{\ell}/q)=1$, $(p/q)=-1$ and $(-1/q) = 1$; by the quadratic reciprocity laws this yields $q \equiv 1 \pmod 4$ and $(p/q)=(q/p) = -1$ as desired.
Note that we did not use assumptions (2)--(4) here.

Assume we are in case (ii). There is $\sigma \in \Gal(F/\Q)$ satisfying
\[
\quad \sigma(\sqrt{p}) = - \sqrt{p}  \quad \text{ and } \quad  \sigma(\sqrt{-1}) =   \sqrt{-1}.
\]
As above, there is a positive density of primes $q \nmid p \ell \Delta_\ell$ such that $\Frob_q$ acts on~$F/\Q$ as~$\sigma$, that is, $(-1/q)=1$, $(p/q)=-1$; by quadratic reciprocity this yields $q \equiv 1 \pmod 4$ and $(p/q)=(q/p) = -1$; finally,
since $\Delta_\ell = - s^2$, we have $(\Delta_\ell/q) = (-1/q) = 1$ as desired.
Again, we did not use assumptions (2)--(4) here.

If assumption (2) holds, then case (iii) does not occur, and the result follows, so we can assume that both (1) and (2) do not hold. If
(3) holds the result follows from Lemma~\ref{good} and Corollary~\ref{cor:isogeny}.
\end{proof}
Let $\ell$ be a prime of good reduction and recall that we denote by $\oE$ the reduction of a minimal model of $E/\Q_\ell$.
We next show that $\Xp(\Q_\ell)$ is empty if and only if $X^-_{\oE}(p)(\F_\ell)$ is empty. Then we classify the points on $\Xp(\F_\ell)$ using the properties of $\End(E/\F_\ell)$.

\begin{lemma}\label{lem:samesym} Let $\ell$ and $p \geq 3$ be different primes.
    Let $E / \F_\ell$ and $E' / \F_\ell$ be elliptic curves with isomorphic $p$-torsion.
    Suppose that~$p \mid \#\rhobar_{E,p}(\Frob_\ell)$.
    Then
    all $G_{\F_\ell}$-modules isomorphisms $\phi : E[p] \to E'[p]$ have the same symplectic type.
\end{lemma}
\begin{proof}
    This follows from Corollary 4, Proposition 5, and Lemma 6 in \cite{symplectic}.
\end{proof}
\begin{lemma}\label{lem:Centeleghe}
    Let $E / \F_\ell$ an ordinary elliptic curve with $p \mid \Delta_\ell$. Then, there exists a suitable choice of basis for $E[p]$ such that
    \begin{equation}\label{eq:cent}
    \rhobar_{E,p}(\Frob_\ell)=\left(\begin{smallmatrix}
				 a_\ell/2 & 0 \\
				b_E & a_\ell/2\end{smallmatrix}\right) \in \GL_2(\F_p). \end{equation}
\end{lemma}
\begin{proof}
    This follows by reducing modulo~$p$ the matrix given by~\cite[Theorem 2]{Centeleghe} and the fact that~$\beta_E=b_E$ as explained in part (2) of Remark~\ref{good}.
\end{proof}

\begin{lemma}\label{lem:eqEnd}
     Let $\ell$ and $p \geq 3$ be different primes. Let $E/\F_\ell$ and $E'/\F_\ell$ be two ordinary elliptic curves
     with endomorphism rings isomorphic to the orders $\cO_E$ and
     $\calO_{E'}$, respectively.
     Suppose there exists $\varphi : E \to E'$ an isogeny of degree $p$ and $E[p] \cong E'[p]$ as $G_{\F_\ell}$-modules.
    Assume further that $p \nmid b_E$. Then $\cO_E=\cO_{E'}$.
\end{lemma}
\begin{proof} Suppose for a contradiction that $\cO_E \ne \cO_{E'}$.
From \cite[Proposition 21]{Kohel}, either $\cO_E \subset \cO_{E'}$ or $\cO_{E'} \subset \cO_E$ and the index of one order into the other
    is~$p$. Since $p \nmid b_E$, we must have $\cO_{E} \subset \cO_{E'}$ and $b_{E'}=p \cdot b_E$.
 As $E$ and~$E'$ are isogenous, we have $a_\ell:=a_\ell(E)=a_\ell(E')$ and $\Delta_\ell:=\Delta_\ell(E)=\Delta_\ell(E')$, therefore
    $p \mid \Delta_\ell$ by~\eqref{discriminats} and so $a_\ell \not\equiv 0 \pmod{p}$.

    Since $E$ and $E'$ are ordinary, Lemma~\ref{lem:Centeleghe} implies that under some choices of bases 
   \begin{equation}\label{E:FrobE}
    \rhobar_{E,p}(\Frob_\ell)=\left(\begin{smallmatrix}
				 a_\ell/2 & 0 \\
				b_E & a_\ell/2\end{smallmatrix}\right),\qquad
                \rhobar_{E',p}(\Frob_\ell)=\left(\begin{smallmatrix}
				 a_\ell/2 & 0 \\
				0 & a_\ell/2\end{smallmatrix} \right) . 
\end{equation}

   Since $b_E \not\equiv 0 \pmod p$, these matrices are not  conjugated, so $E[p] \not\cong E'[p]$, a contradiction.
\end{proof}
\begin{lemma} \label{lem:diagfrob}
    Let $\ell$ and $p \geq 3$ be different primes. Let $a$ be an integer such that $|a|\leq 2\sqrt{\ell}$, $\ell \nmid a$ and $a^2\equiv4\ell \pmod p$. Then, there exists an elliptic curve $E/\F_\ell$ and suitable choice of basis for $E[p]$ such that
    \begin{equation}\label{eq:Frob}
    \rhobar_{E,p}(\Frob_\ell)=\left(\begin{smallmatrix}
				 a/2 & 0 \\
				b & a/2\end{smallmatrix}\right) \in \GL_2(\F_p)\qquad \text{ with } \qquad b \neq 0.
    \end{equation}
\end{lemma}
\begin{proof}
    By \cite[Theorem 4.1]{Waterhouse}, there is an elliptic curve $E/\F_\ell$ with $a_\ell(E):=a$, which is ordinary as $\ell \nmid a$. By assumption we have $\Delta_\ell(E):=a^2-4\ell \equiv 0 \pmod p$, and hence $\rhobar_{E,p}(\Frob_\ell)$ has eigenvalues
    $\lambda_1=\lambda_2\equiv\frac{a}{2} \pmod{p}$.
If $p \nmid b_E$, then Lemma~\ref{lem:Centeleghe} gives that $E$ satisfies \eqref{eq:Frob}.
            Suppose now that $p\mid b_E$.
            By \cite[Theorem 7]{Drew}, there is a chain of $k:=v_p(b_E)$ descending isogenies of degree $p$ from $E$ to an elliptic curve $E'$ satisfying $p \nmid b_{E'}$; i.e. $E'$ belongs to the level $V_d$ in the notation of \textit{loc. cit.} Moreover, $a_\ell(E')=a_\ell(E)=a$ since the two curves are isogenous, and thus Lemma~\ref{lem:Centeleghe} gives that $E'$ satisfies \eqref{eq:Frob}.
\end{proof}
\begin{lemma}\label{lem:square}
    Let $\ell$ and $p \geq 3$ be different primes.
    Let $E/\F_\ell$ be an elliptic curve having an isogeny of prime degree $q\neq p, \ell$. Assume that $E$ satisfies the first four conditions in Theorem~\ref{thm:maingood}. Then, $(q/p)=1$.
\end{lemma}
\begin{proof} 
    An easy adaptation of Lemma~\ref{good} over finite fields gives that the existence of a $q$-isogeny implies $(\Delta_\ell/q)\in \{0,1\}$.
    If $q\mid \Delta_\ell$, then \eqref{g4} implies that
    $(q/p)=1$. If $(\Delta_\ell/q)=1$, then \eqref{g2} implies $(-p/q)=1$. Now, using quadratic reciprocity and \eqref{g1}, one gets
    \[
    \left( \frac{q}{p}\right)=(-1)^{(q-1)/2}\left( \frac{p}{q}\right)=\left( \frac{-p}{q}\right)=1.
    \]
\end{proof}
\begin{lemma}
    Let $\ell$ and $p>3$ be different primes such that $p \equiv 3 \pmod 4$. 
    The equation 
    \begin{equation}\label{eq:sq}
    a^2+pt^2=(a\pm p)^2+pu^2 = 4 \ell
    \end{equation}
    has no solutions $(a,t,u)\in \Z^3$.
\end{lemma}
\begin{proof}
Firstly, we show that $\ell \nmid a$. Indeed $\ell \mid a$ if and only if $\ell \mid t$ if and only if $\ell^2 \mid 4\ell$, a contradiction. 

Consider the quadratic field $K:=\Q(\sqrt{-p})$. Since $-p\equiv 1 \pmod 4$, we have $\cO_K:=\Z[\frac{1+\sqrt{-p}}{2}]$.
Reducing~\eqref{eq:sq} modulo $\ell$, and taking Legendre symbols, gives $\left(\frac{-p}{\ell}\right)=\left( \frac{(a/t)^2}{\ell} \right)=1$. Thus $\ell$ splits in $\cO_K$, and the two primes above it are swapped by $\Gal(K/\Q)=\langle \sigma \rangle$.

Suppose $(a,t,u)\in \Z^3$ satisfies~\eqref{eq:sq} and view it in $\cO_K$. It is easy to see that the elements
    \[
    \alpha:=\frac{a+t\sqrt{-p}}{2}, \qquad \beta:=\frac{(a\pm p)+u\sqrt{-p}}{2}
    \]
belong to $\cO_K$, because \eqref{eq:sq} assures that $a, t$ and $a+p, u$ have the same parity. 
In particular, \eqref{eq:sq} implies that $\Norm(\alpha)=\Norm(\beta)=\ell$. Therefore  $$(\alpha)\cdot\cO_K =:\mathfrak{P}, \text{ and }(\sigma(\alpha))\cdot\cO_K=\sigma(\mathfrak{P})=:\mathfrak{Q},$$
where $\mathfrak{P},\mathfrak{Q}$ must be the primes above $\ell$. Same holds for $\beta,\: \sigma (\beta)$ up to permuting them. Therefore, without loss of generality, $\alpha$ and $\beta$ are both uniformizers of the prime ideal $\mathfrak{P}$. Therefore, $\alpha/\beta\in \cO_K^*=\{\pm 1\}$. This implies that $a=a\pm p$ or $a=-a\mp p$, contradicting the assumption that $p$ is an odd prime.   
\end{proof}

\begin{proposition}\label{prop:good2}
    Let $\ell$ and $p \geq 3$ be two different primes and $E/\Q_\ell$ an elliptic curve with good reduction. 
    Assume the first four conditions in Theorem~\ref{thm:maingood} hold. Then
    \begin{enumerate}
        \item if $\ell < {p^2}/{16}$, then $\Xp(\Q_\ell)=\emptyset$;
        \item otherwise, $\Xp(\Q_\ell)\neq\emptyset$.
    \end{enumerate}
\end{proposition}
\begin{proof}

Suppose first that $\ell < {p^2}/{16}.$ 
Let $E' / \F_\ell$ be an elliptic curve and $\psi : \oE[p] \to E'[p]$ a $G_{\F_\ell}$-isomorphism. We will show that 
$\psi$ is symplectic and the result
follows from Corollary~\ref{rmk}. Indeed,
we claim there is an isogeny $\varphi :\oE \to E'$ of degree $n$ coprime to $\ell$~and~$p$ such that $(n /p)=1$.
The isogeny $\varphi$ induces a symplectic isomorphism
$\varphi|_{\oE[p]} : \oE[p] \to E'[p]$ by Lemma~\ref{lem:isogeny}.
Therefore, by Lemma~\ref{lem:samesym}, the isomorphism
$\psi$ is also symplectic.

We will now prove the claim.

The existence of~$\psi$ implies that $a_\ell(\oE)\equiv a_\ell(E') \pmod p$.
From the assumption $p > 4\sqrt{\ell}$ together with the Hasse--Weil bound, it follows that $a_\ell:=a_\ell(\oE) = a_\ell(E')$. Therefore, there exists an isogeny $\varphi:\oE \to E'$ and we denote its degree by~$n$. By replacing $\varphi$ by its separable part if necessary, we can assume~$\ell \nmid n$.

By Remark~\ref{rmk:good} we have that $\Ebar$ is ordinary, hence $\ell \nmid a_\ell$ and so $E'$ is also ordinary.
Therefore, the endomorphism rings $\End(\oE) \simeq \calO_{\oE}$ and
$\End(E') \simeq \calO_{E'}$ are orders in $\cO_K$, where $K=\Q(\sqrt{\Delta_\ell})$ is an imaginary quadratic field. Moreover, since $p\nmid b_E$ by assumption, we have $\cO_{\oE} = \cO_{E'}$
by Lemma~\ref{lem:eqEnd}. Now, \cite[Proposition 22]{Kohel} assures the existence of two isogenies of coprime degrees
between $\oE$ and~$E'$, so we can assume~$p\nmid n$.

Lastly, we will show that $(n/p)=1$.
Suppose for a contradiction that $(n/p)=-1$; then there is a prime $q \mid n$ such that $q \not= \ell, p $ and $(q/p)=-1$. Recall that $\varphi$ factors into isogenies of prime degree and, without loss of generality, we can assume that $\varphi = \psi \circ \varphi'$ where $\varphi'$ has degree~$q$ and domain~$\oE$, that is, $\oE$ admits a $\F_\ell$-isogeny of degree $q$. However, this contradicts Lemma~\ref{lem:square}, and thus $(n/p)=1$, completing the proof for $\ell < p^2/16$.

Suppose now that $\ell \geq p^2/16 $. Write $a_\ell:=a_\ell(\oE)$.
Since $p \leq 4\sqrt{\ell}$, it follows that $a := a_\ell + p$ or $a:=a_\ell-p$ satisfies the Weil bound $|a|\leq 2\sqrt{\ell}$ and $a^2-4\ell \equiv \Delta_\ell \equiv 0 \pmod p$. 
Therefore, by Lemma~\ref{lem:diagfrob}, there exists an elliptic curve $E'/\F_\ell$ satisfying $a_\ell(E')=a$ and with $\rhobar_{E',p}(\Frob_\ell)$ given by~\eqref{eq:Frob}. Moreover, Lemma~\ref{lem:Centeleghe} implies that $\rhobar_{\oE,p}(\Frob_\ell)$ is given by~\eqref{eq:cent}.

As the matrices describing $\rhobar_{E',p}(\Frob_\ell)$ and $\rhobar_{\oE,p}(\Frob_\ell)$ are conjugated inside $\GL_2(\F_p)$, it follows
that there is a $G_{\F_\ell}$-isomorphism $\psi: \oE[p] \to E'[p]$. If $\psi$ is anti-symplectic, then $(E',\psi)$ gives a point on $X_{\oE}^-(p)(\F_\ell)$. If $\psi$ is symplectic, then we claim that $E'$ admits an anti-symplectic isomorphism $\varphi :E'[p] \to E''[p]$, and therefore $(E'', \varphi \psi)$ gives a point on $X_{\oE }^-(p)(\F_\ell)$, and the result follows from the last statement of Corollary~\ref{rmk}.

We will now prove the claim.

Note first that \eqref{g2} implies that $\Delta_\ell:=a_\ell^2-4\ell=-p t^2$, for some $t\in \Z_{>0}$.
Suppose that $\Delta_\ell(E'):=a^2-4\ell =-pu^2$, some $u\in \Z_{>0}$. By rearranging, one gets that 
\[
a_\ell^2+p t^2 = 4\ell = (a_\ell\pm p)^2+pu^2,
\]
contradicting Lemma~\ref{lem:square}. Therefore, $E'$ does not satisfy \eqref{g2}, hence $X_{E'}^-(p)(\F_\ell) \neq \emptyset$
by Corollary~\ref{rmk} and Proposition~\ref{prop:good1}, proving the claim.
\end{proof}
\begin{proof}[Proof of Theorem~\ref{thm:maingood}]
    We first note that $G:=\rhobar_{E,p}(G_{\Q_\ell})=\langle\rhobar_{E,p}(\Frob_\ell) \rangle$ satisfies $p\mid \#G$ if and only if $p \mid \# \rhobar_{E,p}(\Frob_\ell) $ and this case is covered by Theorem~\ref{thm:abelian}.
    On the other hand, when $p \nmid \# \rhobar_{E,p}(\Frob_\ell) $, the conclusion follows from Proposition~\ref{prop:good1} and Proposition~\ref{prop:good2}. Lastly, since $\oE$ is ordinary by Remark~\ref{rmk:good}, it is not special in the sense of \cite[Definition~1]{Centeleghe}, therefore $b_E = \beta_\ell$ where~$\beta_\ell$ is defined in Theorem~2 of {\it loc. cit.}, and the equivalence in condition~\eqref{g3} follows from~\cite[Corollary 4]{symplectic}.

    We note that the case of $\ell \geq 4g^2$ is also covered by Theorem~\ref{thm:Hensel}. 
\end{proof}
We finish this section with the case of twist of good reduction. Let $E/\Q_\ell$ an elliptic curve with $e=2$. From \cite[Lemmas 3 and 4]{symplectic} there is $u$ such that the quadratic twist $E^u/\Q_\ell$ of~$E/\Q_\ell$ has good reduction.
\begin{corollary}\label{cor:e2}
Let $\ell$ and $p \geq 3$ be two different
primes and $E/\Q_\ell$ an elliptic curve with $e=2$. Let $E^u/\Q_\ell$ be its quadratic twist with good reduction. 
Then $X_E^{-}(p)(\Q_\ell)= \emptyset$ if and only if all of the following hold
\begin{enumerate}
\item $p \equiv 3 \pmod{4}$; 
 \item $-p\Delta_\ell(E^u)=s^2$ for some $s \in \Z$;
\item $p\mid \#\rhobar_{E^u,p}(\Frob_\ell)$;
\item  for all primes $q\neq \ell, \:q \mid \Delta_\ell(E^u) \Rightarrow (q/p)\neq -1;$ 
\item $\ell< p^2/16$.
\end{enumerate} 

\end{corollary}

\begin{proof} 
The conclusion follows from Theorem~\ref{thm:maingood} applied to $E^u/\Q_\ell$ and Lemma~\ref{lem:qtwists}.
\end{proof}
\section{The case of semistability defect \texorpdfstring{$e=3$ or $4$}{}}
\label{sec:non-ab}

In view of Theorem~\ref{thm:p=3}, we could assume $p \geq 7$. However, all the proofs below hold 
for $p \geq 5$ and many arguments also hold for $p=3$, but for simplicity we will always assume $p \geq 5$.

Let $\ell \neq p$ be a fixed prime and $E/\Q_\ell$ be an elliptic curve with potentially good reduction with semistability defect~$e \in \{3,4\} $. Let $F \subset \Qbar_\ell$ be a field such that $F/\Q_\ell$ is totally ramified of degree~$e$ and $E/F$ has good reduction.
Let $\Ebar / \F_\ell$ be the elliptic curve
obtained by reduction of a model of $E / F$ with good reduction.
Let $E_F[p]$ denote $E[p]$ as a $G_F$-module.
We write $\varphi : E_F[p] \rightarrow \overline{E}[p]$ for
the reduction morphism, which is a  $G_F$-isomorphism.
Let $\Aut(\Ebar)$ denote the group of $\overline{\F_\ell}$-automorphisms of $\Ebar$.
Recall $L = \Qun(E[p])$ and $\Phi = \Gal(L/\Qun)$, the inertial field and the inertial group.
The action of $\Phi$ on $L$ induces an injective morphism
$\gamma_E : \Phi \rightarrow \Aut(\overline{E})$ satisfying,
for all $\sigma \in \Phi$,
\begin{equation}
\label{E:phi2}
 \varphi \circ \rhobar_{E,p}(\sigma) = \psi(\gamma_E(\sigma)) \circ \varphi,
\end{equation}
where $\psi : \Aut(\Ebar) \rightarrow \GL(\Ebar[p])$ is the natural
injective morphism (see~\cite[\S 16]{symplectic}).

We will write $K:=\Q_\ell(E[p])$ for the $p$-torsion field over $E$ and $G:=\Gal(K/\Q_\ell)$ for its Galois group, and we note that $G \simeq \rhobar_{E,p}(G_{\Q_\ell})$. We say that $K/\Q_\ell$ is abelian/cyclic if $G$ is.
In the presence of a second curve $E'/\Q_\ell$ we will use the notations $e'$, $K'$ and $\varphi'$.

\subsection{The field of good reduction when \texorpdfstring{$e=3$}{} or \texorpdfstring{$4$}{}}

Recall that Theorem~\ref{thm:abelian} provides a point in~$\Xp(\Q_\ell)$ when $G $ is abelian.
Therefore, we will focus on the case of non-abelian~$G$ or equivalently non-abelian $K/\Q_\ell$.
We recall the following result from~\cite{symplectic}.

\begin{theorem}[Freitas-Kraus]\label{thm:F}
Let $E/\Q_\ell$ be an elliptic curve with $e \in \{3,4\} $.
Then,
\begin{enumerate}
    \item If $\gcd(\ell,e)=1$, then $K/\Q_\ell$ is non-abelian if and only if $\ell \equiv -1 \pmod{e}$.
    \item If $(\ell,e)=(3,3)$, then $K/\Q_\ell$ is non-abelian if and only if $\tilde{\Delta} \equiv 2 \pmod 3$.
    \item If $(\ell,e)=(2,4)$, then $K/\Q_\ell$ is non-abelian if and only if $\tilde{c}_4\equiv 5\tilde{\Delta} \pmod{8}$.
\end{enumerate}

Moreover, in all cases,
there is a degree~$e$, non-Galois, totally ramified extension $F/\Q_\ell$ such that $E$ has good reduction over~$F$. In correspondence with each of the above cases, the field $F$ is given by
\begin{enumerate}
    \item $F=\Q_\ell(\ell^{1/e})$;
    \item $F$ is defined by the polynomial $x^3+3x^2+3$;
    \item $F = F_i$ is defined by $f_1:=x^4 + 12x^2 + 6$ or $f_2:= x^4 + 4x^2 + 6$;
    furthermore, $E$ has good reduction over exactly one of the $F_i$ and if $E/F_2$ has good reduction then the quadratic twist of~$E$ by~$-1$ has good reduction over~$F_1$.
\end{enumerate}
\end{theorem}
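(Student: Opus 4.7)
The plan is to prove both parts of the theorem together via explicit construction of $F$, since the field of good reduction directly controls when $K/\Q_\ell$ is abelian. The core observation is that if $E/F$ has good reduction with $F/\Q_\ell$ totally ramified of degree $e$, then by Neron--Ogg--Shafarevich the inertia $I_F$ acts trivially on $E[p]$, so the Galois closure $\tilde{F}/\Q_\ell$ sits inside $K = \Q_\ell(E[p])$. Consequently $K/\Q_\ell$ is non-abelian whenever $\tilde{F} \neq F$; conversely, when $F/\Q_\ell$ is already Galois I would show $G = \Gal(K/\Q_\ell)$ is abelian because the inertial group $\Phi$ is cyclic and is centralized by Frobenius, which acts through the mod $p$ cyclotomic character and therefore commutes with a tame action by $e$-th roots of unity once those lie in $\Q_\ell$.

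For $\gcd(\ell,e) = 1$, I would start from a minimal Weierstrass model of $E/\Q_\ell$ whose Kodaira type (IV or IV$^*$ when $e = 3$, III or III$^*$ when $e = 4$) pins down the valuations of $c_4, c_6, \Delta_m$. A substitution $x \mapsto \ell^{2/e} u$, $y \mapsto \ell^{3/e} v$, possibly preceded by a quadratic twist, transforms the equation into an integral model over $F = \Q_\ell(\ell^{1/e})$ with good reduction. This field is Galois over $\Q_\ell$ if and only if $\Q_\ell$ contains a primitive $e$-th root of unity, which for $e \in \{3,4\}$ is equivalent to $\ell \equiv 1 \pmod{e}$, giving case (1).

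For the wild cases $(\ell,e) = (3,3)$ and $(\ell,e) = (2,4)$, the naive field $\Q_\ell(\ell^{1/e})$ no longer works and a different totally ramified extension must be exhibited. Here I would follow Kraus's treatment of the Tate algorithm under wild ramification: write the general minimal Weierstrass equation, apply the substitution suggested by a root of $x^3 + 3x^2 + 3$ (respectively $x^4 + 12x^2 + 6$ or $x^4 + 4x^2 + 6$), and verify via congruences modulo small powers of $\ell$ precisely when the transformed model acquires good reduction. The hypotheses $\tilde{\Delta} \equiv 2 \pmod{3}$ and $\tilde{c}_4 \equiv 5\tilde{\Delta} \pmod{8}$ emerge as the exact congruences ensuring that the relevant substitution succeeds while leaving $F$ non-Galois; the Galois closures of the two polynomials have groups $S_3$ and $D_4$ respectively, both non-abelian, which forces $K/\Q_\ell$ to be non-abelian. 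In the complementary congruence classes, the substitution produces good reduction over an abelian extension, and the centralizer argument of the first paragraph gives $K/\Q_\ell$ abelian. The bifurcation $F_1$ versus $F_2$ in case (3) tracks a quadratic twist by $-1$: twisting $E$ by $-1$ interchanges the models on which the two substitutions produce good reduction, as can be checked by examining how $c_4, c_6, \Delta_m$ transform under the twist.

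The main obstacle is the wild case, where one must carefully track the interaction between the ramification of $\ell \in \{2,3\}$ and the ramification introduced by the substitution, with no slack in the valuations of $c_4, c_6, \Delta_m$; verifying that the Galois closures of $f_1$ and $f_2$ over $\Q_2$ really are $D_4$ (rather than cyclic) requires a discriminant computation modulo $\Q_2^{\times 2}$ that is essentially the source of the condition $\tilde{c}_4 \equiv 5\tilde{\Delta} \pmod{8}$. The tame case is comparatively routine, resting on the elementary fact that $\F_\ell^\times$ contains $e$-th roots of unity precisely when $e \mid \ell - 1$.
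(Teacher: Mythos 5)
Your proposal has a genuine gap at its logical core, and the wild cases are left as an uncarried-out program. (For context: the paper does not reprove this statement at all; its proof is a citation to Theorem~17, Corollary~5 and Propositions~7--8 of \cite{symplectic}, so any from-scratch argument must actually supply the computations those results contain.) The central flaw is your claim that if $E$ acquires good reduction over a totally ramified degree-$e$ extension $F/\Q_\ell$, then the Galois closure $\tilde F$ embeds into $K=\Q_\ell(E[p])$. The containment goes the other way: triviality of $I_F$ on $E[p]$ says $I_F\subseteq G_K$, i.e.\ $K\subseteq F\cdot \Q_\ell^{\un}$, and it is simply false in general that $F\subseteq K$. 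For instance, with $e=4$, $\ell=3$, $p=5$ one has $r=\mathrm{ord}_5(3)=4\not\equiv 2\pmod 4$, so by Proposition~\ref{prop:G}(2)(b) $K\cap F=\Q_3(\sqrt{3})\subsetneq F=\Q_3(3^{1/4})$. The implication you want ($F$ non-Galois $\Rightarrow$ $K/\Q_\ell$ non-abelian) is nevertheless true, but must be routed through the inertial field $L=\Q_\ell^{\un}(E[p])=\Q_\ell^{\un}\cdot F$: if $\Gal(K/\Q_\ell)$ were abelian, a Frobenius lift would centralize the inertia subgroup $\Phi\cong\Gal(K/\Knr)$, hence (as $\Phi$ is cyclic for $e\in\{3,4\}$) $\Gal(L/\Q_\ell)$ would be abelian and every subfield of $L$, in particular $F$, would be Galois over $\Q_\ell$. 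With that repair, your tame-case argument (Frobenius acts on tame inertia by $\sigma\mapsto\sigma^{\ell}$, so it centralizes $\Phi$ iff $\ell\equiv 1\pmod e$) does give case (1).

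For cases (2) and (3), which carry essentially all the content, you do not prove anything: you assert that the congruences $\tilde{\Delta}\equiv 2\pmod 3$ and $\tilde{c}_4\equiv 5\tilde{\Delta}\pmod 8$ ``emerge as the exact congruences ensuring that the relevant substitution succeeds,'' but no substitution is written down, no valuation or congruence is verified, and in particular the ``only if'' directions --- that in the complementary congruence classes $E$ acquires good reduction over a totally ramified \emph{cyclic} quartic (resp.\ cubic) extension, whence $K/\Q_\ell$ is abelian --- are pure assertion. The same goes for the $F_1$/$F_2$ dichotomy and its interaction with the twist by $-1$, which you say ``can be checked'' without checking it. These are exactly the multi-page Tate-algorithm computations of Propositions~7 and~8 of \cite{symplectic}; deferring them leaves the theorem unproved. (Your side observations that the splitting fields of $x^3+3x^2+3$ over $\Q_3$ and of $f_1,f_2$ over $\Q_2$ have groups $S_3$ and $D_4$ are correct, but they only feed the direction already compromised by the false containment above.)
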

\begin{proof}
This is a summary of the relevant entries in Theorem~17 together with Corollary~5, Proposition~7, and Proposition~8 in \cite{symplectic}.
\end{proof}

\subsection{The non-abelian \texorpdfstring{$p$}{}-torsion field extension}

The following result refines~\cite[Theorem~3.2]{doks} under additional hypotheses.

\begin{lemma}\label{lem:al0}
    Let $\ell$ and $p \geq 5$ be two different primes and $E/\Q_\ell$ an elliptic curve with $e \in \{3,4\} $. Suppose that $K/\Q_\ell$ is
    non-abelian, so that $E$ has good reduction over~$F$ given as in Theorem~\ref{thm:F}.
    Then  $a_F(E)=0$.
\end{lemma}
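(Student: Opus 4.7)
The plan is to prove $a_F(E) = 0$ by showing $a_F(E) \equiv 0 \pmod q$ for an auxiliary prime $q > 2\sqrt{\ell}$, after which the Hasse--Weil bound $|a_F(E)| \le 2\sqrt{\ell}$ (the residue field of~$F$ is~$\F_\ell$ because $F/\Q_\ell$ is totally ramified) forces $a_F(E)=0$. To compute $a_F(E) \bmod q$ I would choose a Frobenius lift $\phi \in G_F \subseteq G_{\Q_\ell}$: because $F/\Q_\ell$ is totally ramified, the composition $G_F \hookrightarrow G_{\Q_\ell} \twoheadrightarrow G_{\Q_\ell}/I_\ell \cong \hat{\Z}$ is surjective, so some $\phi \in G_F$ maps to a generator of $G_{\Q_\ell}/I_\ell$; such $\phi$ is simultaneously a Frobenius lift for~$F$, whence $\tr \rhobar_{E,q}(\phi) \equiv a_F(E) \pmod{q}$ for every prime $q \neq \ell$.

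My next step is to observe that the non-abelian hypothesis propagates to every other torsion prime: Theorem~\ref{thm:F} characterizes non-abelianness of $K=\Q_\ell(E[p])$ by conditions on $E/\Q_\ell$ alone (such as $\ell \equiv -1 \pmod{e}$, $\tilde{\Delta} \equiv 2 \pmod{3}$, or $\tilde{c}_4 \equiv 5\tilde{\Delta}\pmod{8}$), with no reference to the torsion prime. Hence $\Q_\ell(E[q])/\Q_\ell$ will also be non-abelian for every prime $q \neq \ell$. I would then fix a prime $q \geq 5$ with $q \neq \ell$ and $q > 2\sqrt{\ell}$. The inertia image $\rhobar_{E,q}(I_\ell) = \langle \sigma_q \rangle$ is cyclic of order~$e$ (isomorphic to $\Phi$, which acts faithfully on $E[q]$ by the definition of~$L$) and normal in the full image $\rhobar_{E,q}(G_{\Q_\ell})$; since $q \nmid e$, $\det \sigma_q = \chi_q|_{I_\ell} = 1$, and $\sigma_q \neq \pm\Id$, the matrix $\sigma_q$ is semisimple over $\overline{\F_q}$ with two distinct eigenvalues $\zeta,\zeta^{-1}$, primitive $e$-th roots of unity.

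The final step is the conjugation argument. We have $\rhobar_{E,q}(\phi)\,\sigma_q\,\rhobar_{E,q}(\phi)^{-1} = \sigma_q^a$ for some $a \in (\Z/e\Z)^\times$, and non-abelianness forces $a \neq 1$. For $e \in \{3,4\}$, $(\Z/e\Z)^\times = \{\pm 1\}$, so $a = -1$, i.e.\ $\rhobar_{E,q}(\phi)$ swaps the two eigenlines of~$\sigma_q$ over~$\overline{\F_q}$; in that eigenbasis it is anti-diagonal, so $\tr \rhobar_{E,q}(\phi) = 0$. This yields $a_F(E) \equiv 0 \pmod{q}$, and the choice $q > 2\sqrt{\ell}$ forces $a_F(E) = 0$.

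The main subtlety I expect is that in the wild cases $(\ell,e) \in \{(3,3),(2,4)\}$ the order-$e$ image of inertia arises from wild rather than tame inertia, so the usual relation $\Frob \cdot \sigma \cdot \Frob^{-1} = \sigma^\ell$ is not available. However, the conjugation step above is purely group-theoretic, using only that $\rhobar_{E,q}(I_\ell)$ is cyclic of order~$e$ and normal in the full image, both of which hold whether the order-$e$ inertia is tame or wild; tame and wild cases should therefore be handled by a single argument.
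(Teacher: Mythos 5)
Your proof is correct, and it takes a genuinely different route from the paper's. The paper argues by cases on $\ell$: for $\ell\geq 5$ it invokes Dokchitser--Dokchitser to show the reduction is potentially supersingular, so $\ell\mid a_F(E)$ and the Weil bound gives $a_F(E)=0$; for $(\ell,e)\in\{(2,3),(2,4),(3,3),(3,4)\}$, where supersingularity alone does not force the trace to vanish, it pulls explicit good-reduction models with residual curves $Y^2+Y=X^3$ or $Y^2=X^3\pm X$ from the Freitas--Kraus memoir and counts points directly. Your argument is uniform: you exploit that the non-abelianness criteria of Theorem~\ref{thm:F} depend only on $E/\Q_\ell$ and not on the torsion prime, pick an auxiliary prime $q\geq 5$ with $q\neq\ell$ and $q>2\sqrt{\ell}$, and observe that since $\rhobar_{E,q}(I_\ell)\cong\Phi$ is cyclic of order $e\in\{3,4\}$, normal in the image, with determinant $1$ and hence two distinct eigenvalues $\zeta,\zeta^{-1}$, any Frobenius lift $\phi\in G_F$ must invert the inertia generator (else the image would be abelian), so $\rhobar_{E,q}(\phi)$ swaps the eigenlines and has trace $0$; since $\phi$ is simultaneously a Frobenius for $F$ (as $F/\Q_\ell$ is totally ramified) and $E/F$ has good reduction, this gives $a_F(E)\equiv 0\pmod q$ and the Weil bound finishes. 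All the ingredients check out: the surjectivity of $G_F\to G_{\Q_\ell}/I_\ell$, the normality and faithfulness of the inertia image, the semisimplicity for $q\nmid e$, and crucially the fact that the argument is insensitive to whether the order-$e$ inertia action is tame or wild. What your approach buys is the elimination of both the appeal to \cite{doks} and the case-by-case residual-curve computations; what it costs is the reliance on the $p$-independence of Theorem~\ref{thm:F} and the introduction of the auxiliary prime $q$, neither of which is problematic. The only point worth spelling out slightly more is that $\rhobar_{E,q}(G_{\Q_\ell})$ is generated by $\rhobar_{E,q}(I_\ell)$ and $\rhobar_{E,q}(\phi)$ (because $G_{\Q_\ell}$ is topologically generated by $I_\ell$ and any Frobenius lift and the image is finite), which is what makes ``$a=1$ implies abelian image'' legitimate.
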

\begin{proof}
Assume first that $\ell \geq 5$. We claim that \cite[Theorem 3.2]{doks}
implies that $E$ has potentially good supersingular reduction.
Thus $E/F$ has good supersingular reduction, hence $\ell \mid a_F(E)$ and the Weil bound $|a_F(E)|\leq 2 \sqrt{\ell}$ implies $a_F(E)=0$.

We now prove the claim.
For $\ell \equiv -1 \pmod{12}$ it follows from the first case of \cite[Thm~3.2]{doks}, so we can assume that $\ell \not\equiv -1 \pmod{12}$.

Suppose $e=3$ so that $\ell \equiv 5 \pmod{12}$ by Theorem~\ref{thm:F} since $\Q_\ell(E[p])/\Q_\ell$ is non-abelian.
Recall from \cite[Proposition~1]{Kraus}
that $e$ equals the denominator of $v_\ell(\Delta_m)/12$, thus $4 \mid v_\ell(\Delta_m)$
and \cite[Tableau~1]{Papa} shows that the Kodaira type is IV
or IV* and the claim also follows from~\cite[Thm~3.2]{doks}.
If $e=4$ then we have $\ell \equiv 7 \pmod{12}$ and the claim follows by an analogous argument.

To finish, we have to deal with the cases $(\ell,e) \in \{ (3,4), (3,3), (2,3), (2,4) \}$. Observe that for $\ell=2$ and $\ell=3$ it is possible for an elliptic curve $E/F$ to have good supersingular reduction
and $a_F(E) \neq 0$. We will now show this does not occur in our setting.

Suppose $(\ell,e) = (3,4)$. From \cite[Lemma~21]{symplectic}, after replacing $E$ by a 2-isogenous curve if needed, there is a minimal model
of $E/F$ with good reduction and $\Ebar : Y^2 = X^3 - X$ over $\F_3$. Note that taking a $2$-isogeny preserves $a_F(E)$.

Suppose $(\ell,e) = (3,3)$. From \cite[Lemma~17]{symplectic} there is a minimal model of $E/F$ with good reduction and residual curve $\Ebar : Y^2 = X^3 + X$ over $\F_3$.

Suppose $(\ell,e) = (2,3)$. From \cite[Proposition~10]{symplectic}, after replacing $E$ by the unramified quadratic twist if needed, we can assume that $E$ has a $3$-torsion point over $\Q_2$.
From \cite[Lemma~15]{symplectic} there is a model of $E/F$ with good reduction and $\Ebar : Y^2 + Y = X^3$ over~$\F_2$.

Suppose $(\ell,e) = (2,4)$. From Theorem~\ref{thm:F}, after replacing $E$ by its quadratic twist by -1 if needed, we can assume that
$F$ is given by the polynomial $f_1$.
From \cite[Lemma~22]{symplectic} there is a minimal model of $E/F$ with good reduction and $\Ebar : Y^2 + Y = X^3$ over $\F_2$.

We conclude that $a_F(E) = (\ell + 1) - \# \Ebar(\F_\ell)= 0$ in all cases, noting that the quadratic twist does not affect the value of the trace of Frobenius.
\end{proof}

The following is a refinement of~\cite[Proposition~9]{symplectic} and gives the presentation of $G$ in terms of $\ell$ and $p$ only, whenever it is non-abelian.

\begin{proposition}\label{prop:G} 
    Let $\ell$ and $p \geq 5$ be two different primes and $E/\Q_\ell$ an elliptic curve with $e \in \{3,4\} $.
    Let $r$ be the order of $\ell \pmod p$.
     Suppose that $K/\Q_\ell$ is non-abelian. Let $F$ be the field of good reduction of~$E$ given as in Theorem~\ref{thm:F}. We have the following cases:

(1) Suppose $e=3$. Then  $G = \langle \tau, \sigma  : \tau ^f=1, \sigma ^3=1, \tau \sigma \tau ^{-1} = \sigma ^{-1} \rangle$ and $K\cap F= F$.

(2) Suppose $e=4$. Set $K_2 := \Q_\ell(\sqrt{\ell})$ if $\ell \geq 3$ and $K_2 := \Q_2(\sqrt{-2})$ if $\ell=2$.
	\begin{itemize}
        \item[(a)] If $r\equiv 2 \pmod 4$, then
        $G = \langle \tau, \sigma  : \tau ^f=1, \sigma ^4=1, \tau \sigma \tau ^{-1} = \sigma ^{-1} \rangle$ and $K\cap F= F$;
        \item[(b)] If $r\not\equiv 2 \pmod 4$, then $G  =\langle \tau , \sigma  : \tau ^{2f}=1, \sigma ^4=1, \tau \sigma \tau ^{-1} = \sigma ^{-1} \rangle$ and $K\cap F= K_2$.
    \end{itemize}
where $f=[\Knr:\Q_\ell]$.

Furthermore, in all cases, $\tau$ can be any generator of the cyclic group $\Gal(K/K\cap F)$ and we can identify $\sigma$ with any generator of the inertia group $\Phi$. Moreover, we can assume that $\tau$ acts on the residue field extension as the Frobenius automorphism and on~$K$ as~$\Frob_F$.
\end{proposition}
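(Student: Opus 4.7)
The plan is to construct $G$ from its inertia subgroup (identified with $\Phi$) and its Frobenius quotient, and then to pin down the conjugation relation and the intersection $K\cap F$ using the supersingular reduction afforded by Lemma~\ref{lem:al0}.

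I would first set up the coarse structure. Since $L = K\cdot\Qun$, restriction gives an isomorphism $\Phi = \Gal(L/\Qun) \cong \Gal(K/\Knr)$, so the inertia subgroup of $G$ is cyclic of order $e$ with a chosen generator $\sigma$, and the quotient $G/\langle\sigma\rangle = \Gal(\Knr/\Q_\ell)$ is cyclic. Because $F/\Q_\ell$ is totally ramified, $\Frob_F$ acts trivially on $F$ but induces the Frobenius on the residue field, so $\tau := \Frob_F|_K$ is a valid lift of Frobenius lying in $\Gal(K/K\cap F)$. For the conjugation relation, via~\eqref{E:phi2}, conjugation of $\sigma$ by $\tau$ on $E[p]$ amounts to conjugating $\gamma_E(\sigma) \in \Aut(\Ebar)$ by the Frobenius endomorphism of $\Ebar/\F_\ell$. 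In the tame regime $\ell\nmid e$, this is the classical fact that Frobenius acts on tame inertia by $\ell$-th power, and the non-abelian condition of Theorem~\ref{thm:F}(1) is precisely $\ell\equiv -1\pmod e$, yielding $\tau\sigma\tau^{-1}=\sigma^{-1}$. For the wild cases $(\ell,e)\in\{(3,3),(2,4)\}$, Lemma~\ref{lem:al0} forces $\Ebar$ to be a specific supersingular curve, and a direct computation inside its automorphism ring yields the same inversion.

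To determine $K\cap F$ I would argue as follows. For $e=3$, the non-Galois cubic $F$ has no proper intermediate field, so $K\cap F\in\{\Q_\ell,F\}$; the option $K\cap F=\Q_\ell$ is ruled out because the totally ramified subextension of $K/\Q_\ell$ has degree $3$ and, being $\sigma$-invariant inside the Galois closure of $F$, must coincide with $F$. For $e=4$, the non-Galois quartic $F$ has a unique quadratic subfield, which one reads off from the explicit polynomials in Theorem~\ref{thm:F} to be $K_2$. Since $\Frob_F^2 = -\ell$ in $\GL_2(\F_p)$ by Lemma~\ref{lem:al0}, the multiplicative order of $\Frob_F$ on $E[p]$ equals $f$ when $r\equiv 2\pmod 4$ and $2f$ otherwise, and an index count against $[K:\Q_\ell]=4f$ gives cases (a) and (b) respectively.

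The main obstacle I foresee is case~(2)(b): one must reconcile $|G|=4f$ with the stated dihedral-like presentation of a priori order $8f$, which forces the hidden relation $\tau^f = \sigma^2$ and hence $f$ even; this in turn must match the arithmetic of $r\pmod 4$. Once these are settled, the freedom to take $\tau$ as any generator of $\Gal(K/K\cap F)$ and $\sigma$ as any generator of $\Phi$ is immediate from the cyclicity of each group and the fact that any two lifts of Frobenius differ by an element of $\langle\sigma\rangle$.
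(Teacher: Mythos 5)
Your architecture is sound and, at its core, parallels the paper's: the paper imports the dichotomy of group structures, the relation $\tau\sigma\tau^{-1}=\sigma^{-1}$, and the generation of $\Gal(K/K\cap F)$ by $\Frob_F|_K$ from \cite[Lemma 9 and Proposition 9]{symplectic}, and then its only genuinely new steps are (a) adjusting $\tau$ so that it restricts to Frobenius on the residue field, and (b) deciding between cases (2)(a) and (2)(b) by combining $a_F(E)=0$ (so $N^2=-\ell\,\Id$ for $N=\rhobar_{E,p}(\Frob_F)$) with the degree formula $[K:\Q_\ell]=4r$ or $8r$ from \cite[Theorem 8]{torsf}. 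You instead re-derive the imported facts from first principles (tame inertia conjugated by Frobenius, the residual automorphism group in the wild cases, subfield structure of the non-Galois $F$), and your choice $\tau:=\Frob_F|_K$ actually handles the ``moreover'' clause more directly than the paper's generator-adjustment argument. That is a legitimate alternative route, but two steps need repair or completion.

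First, your justification of $K\cap F=F$ for $e=3$ does not work as written: there is no canonical ``totally ramified subextension of $K/\Q_\ell$'' (only $K/\Knr$ is canonically totally ramified), and a cubic subfield of $K$ is not $\sigma$-invariant --- that is exactly why $F$ is non-Galois --- so the phrase ``being $\sigma$-invariant inside the Galois closure of $F$'' proves nothing. The correct and much simpler argument, available to you once you know $\tau=\Frob_F|_K$ generates $\Gal(K/K\cap F)$ (because $KF/F$ is unramified by good reduction over $F$): if $K\cap F=\Q_\ell$ then $G=\langle\tau\rangle$ is cyclic, contradicting non-abelianness; since $F$ is a non-Galois cubic its only subfields are $\Q_\ell$ and $F$, so $K\cap F=F$. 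Second, the step ``the multiplicative order of $\Frob_F$ on $E[p]$ equals $f$ when $r\equiv 2\pmod 4$ and $2f$ otherwise'' does not follow from $N^2=-\ell\,\Id$ and the equality $[K:\Q_\ell]=4f$ alone --- that count only says $\mathrm{ord}(N)\in\{f,2f\}$ and is circular without an independent handle on $f$. You must either invoke \cite[Theorem 8]{torsf} as the paper does, or argue intrinsically: $N$ is non-scalar, so $\mathrm{ord}(N)=2\,\mathrm{ord}_{\F_p^\times}(-\ell)$, and since $\rhobar_{E,p}(\sigma)^2=-\Id$, one has $\mathrm{ord}(N)=2f$ exactly when $-\Id\in\langle N\rangle$, i.e.\ when $\mathrm{ord}_{\F_p^\times}(-\ell)$ is even, which a short computation shows happens precisely for $r\not\equiv 2\pmod 4$. (Your observation that case (2)(b) hides the relation $\tau^f=\sigma^2$ and forces $f$ even is correct and is essentially this same computation.) With these two repairs the proposal goes through.
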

\begin{proof}
From \cite[Lemma 9 and Proposition 9]{symplectic} we have that either

(i) $G = \langle \tau, \sigma  : \tau ^f=1, \sigma ^e=1, \tau \sigma \tau ^{-1} = \sigma ^{-1} \rangle$ and $K\cap F= F$, or

(ii) $e=4$, $G  =\langle \tau , \sigma  : \tau ^{2f}=1, \sigma ^4=1, \tau \sigma \tau ^{-1} = \sigma ^{-1} \rangle$ and $K\cap F= K_2$.

Moreover, the claim regarding $\sigma$, the fact that $\tau$ can be any generator of $\Gal(K/K\cap F)$ also follows from {\it loc. cit.}
The assertion that we can choose $\tau$ to act on the residue field extension as a Frobenius automorphism is stated in {\it loc. cit.} without proof. We include it here for completeness.

Let $\tau$ be a generator $\Gal(K/K\cap F)$ and
$\varphi$ be the unique lift of the Frobenius automorphism in
$\Gal(\Knr / \Q_\ell)$. Let $M := \Knr \cdot (K \cap F)$.
The groups $\Gal(M/K\cap F)$ and $\Gal(\Knr / \Q_\ell)$ are isomorphic of  order~$f = [\Knr : \Q_\ell]$ and generated, respectively, by the restrictions $\tau|_M$ and
$(\tau|_M)|_{\Knr} = \tau|_{\Knr} = \varphi^k$ where~$k$ is coprime to~$f$. In particular, replacing $\tau$ by $\tau^s$ where $s$ is odd and $ks\equiv 1 \pmod{f}$, we can assume that $\tau|_{\Knr} = \varphi$ as desired.

Since $E/F$ has good reduction, the extension~$KF/F$ is unramified, so the restriction $\Frob_F|_{KF}$ generates $\Gal(KF/F)\simeq \Gal(K/K\cap F)$ and so $\Frob_F|_{L}$ generates $\Gal(M/K\cap F)$. Also, the restriction $\Frob_F|_{\Knr F}$ generates $$\Gal(\Knr F/ F)\simeq \Gal(\Knr/\Knr \cap F) = \Gal(\Knr / \Q_\ell)$$
and since $\Frob_F|_{\Knr} = \varphi$, we have that $\Frob_F$ and~$\tau$ coincide as elements of $\Gal(M/K\cap F)$.

Now, if we are in case (i), we have $M = K$
and $\tau$ acts as $\Frob_F$ on~$K$.
So, assume that we are in case (ii) and
$\Frob_F \neq \tau$ as elements of $\Gal(K/K\cap F)$. Then they differ on $K/M$, which is the quadratic ramified extension fixed by the inertia element $\sigma^2$. Thus, replacing $\tau$ by $\sigma^2 \tau$ yields the desired conclusion.

We will now establish the relation between $r$ and the two cases in (ii).

Suppose that $e=4$.
After a choice of basis, we denote by $N\in \GL_2(\F_p)$ the matrix corresponding to  $\rhobar_{E,p}(\Frob_F)$ (or equivalently, to $\tau$).
 By \cite[Theorem 8]{torsf}, we have
    \begin{equation}\label{eq:d}
d:=[K:\Q_\ell]=\begin{cases}
    4r, & \text{ if }r \text{ is even},\\
    8r, & \text{ if }r \text{ is odd},
\end{cases}
    \end{equation}
    where $r$ is the order of $\ell \pmod p$. As
    $[K:\Q_\ell]=4f$, we get that 
     \[
f=\begin{cases}
    r, & \text{ if }r \text{ is even},\\
    2r, & \text{ if }r \text{ is odd}.
\end{cases}
    \]
    Recall that $E/F$ has good reduction and $a_F(E)=0$ by Lemma~\ref{lem:al0}. Therefore, the characteristic
    polynomial of $N$ is $t^2+\ell$.
By the Cayley--Hamilton theorem, we have $N^2=-\ell \:\Id$.
We aim to describe the order of $N$ in terms of $f$, as this determines the case for $G$.

Suppose first that $r$ is odd, then $f=2r$. One gets that 
\[
N^f=(N^2)^r=(-\ell)^r \:\Id=-\ell^r\:\Id= -\:\Id,
\]
showing that $f$ is not the order of $N$, and hence we must be
in case~(ii).

Suppose now that $r$ is even. We have $\ell^{r/2}=-1$.
If $r=4k$, then $f=r=4k$, and hence
\[
N^f= (N^2)^{2k}=(-\ell)^{2k}\:\Id= \ell^{r/2} \:\Id =- \:\Id,
\]
and again, we are in case (ii).
Finally, if $r=4k+2$, then $f=r=4k+2$, and hence 
\[
N^f= (N^2)^{2k+1}=(-\ell)^{2k+1}\:\Id= -\ell^{r/2} \:\Id = \:\Id,
\]
showing that $N$ is of order~$f$ and hence we are in case (i).
\end{proof}

We derive the following consequence.

\begin{corollary} \label{cor:intersection}
Let $\ell$ and~$p \geq 5$ be different primes.
    Let $E/\Q_\ell$ and~$E'/\Q_\ell$ be elliptic curves with 
    $e=e'\in \{3,4\}$.
    Suppose that $K/\Q_\ell$ and $K'/\Q_\ell$ are non-abelian.
    Let $F$ be the field of good reduction of~$E$ given by Theorem~\ref{thm:F}. If $(\ell,e) = (2,4)$ assume also that $E'/F$ has good reduction.
    Then $K \cap F = K' \cap F$.
\end{corollary}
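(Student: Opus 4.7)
The plan is to apply Proposition~\ref{prop:G} to both $E$ and $E'$ with a \emph{common} choice of the field $F$, and then read off $K\cap F$ and $K'\cap F$ from the proposition, observing that the resulting description depends only on the quantities $\ell$, $p$, and $e$, which are shared by the two curves.

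The first step is to verify that the field $F$ furnished by Theorem~\ref{thm:F} for $E$ is also (allowed to be) the field of good reduction that Theorem~\ref{thm:F} assigns to $E'$. In cases (1) and (2) of that theorem this is automatic, since $F$ is determined explicitly by $\ell$ and $e$ alone: $F=\Q_\ell(\ell^{1/e})$ when $\gcd(\ell,e)=1$ (and $\ell\equiv -1\pmod e$), and $F$ is the field defined by $x^3+3x^2+3$ when $(\ell,e)=(3,3)$. In case (3), where $(\ell,e)=(2,4)$ and Theorem~\ref{thm:F} offers the two possibilities $F_1,F_2$, the ambiguity is exactly resolved by the extra hypothesis of the corollary, which stipulates that $E'/F$ has good reduction.

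Having pinned down a common $F$, I apply Proposition~\ref{prop:G} separately to $E$ and to $E'$. When $e=3$, the proposition gives $K\cap F=F$ on the one hand and $K'\cap F=F$ on the other, yielding the equality directly. When $e=4$, the split between subcases (a) and (b) is dictated solely by the residue of $r:=\mathrm{ord}(\ell\bmod p)$ modulo $4$, and $r$ depends only on $\ell$ and $p$, not on the elliptic curve involved. Hence $E$ and $E'$ land in the same subcase: in (a) both intersections equal $F$, and in (b) both equal $K_2$, a field defined purely in terms of $\ell$. In every possibility the two intersections coincide.

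I do not expect any real obstacle here: the corollary is essentially a bookkeeping consequence of the uniform description provided by Proposition~\ref{prop:G}. The only subtlety worth emphasising is that, in the $(\ell,e)=(2,4)$ setting, Theorem~\ref{thm:F} does not uniquely select $F$ from $E$ alone, which is precisely why the corollary includes the hypothesis on $E'/F$ to ensure that Proposition~\ref{prop:G} can be invoked with the same $F$ for both curves.
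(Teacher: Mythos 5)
Your proposal is correct and follows essentially the same route as the paper: both arguments note that $F$ is uniquely determined by $(\ell,e)$ except when $(\ell,e)=(2,4)$ (where the extra hypothesis supplies a common $F$), and then invoke Proposition~\ref{prop:G}, whose output for $K\cap F$ depends only on $\ell$, $p$, and $e$. Your write-up is, if anything, slightly more explicit than the paper's in spelling out the $e=4$ subcases.
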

\begin{proof} From Theorem~\ref{thm:F}, the field~$F$ is unique when $(\ell,e) \neq (2,4)$, therefore our statement ensures that $E'/F$ has good reduction for all $\ell$. The quantity $r$ in the statement of Proposition~\ref{prop:G} depends only on~$\ell$ and~$p$, so applying it
to both curves yields
$K \cap F = K' \cap F$ in all cases.
\end{proof}

The following shows that the $p$-torsion field of $E/\Q_\ell$ is essentially uniquely determined by the semistability defect if it is non-abelian.

\begin{proposition}\label{prop:samefields}
Let $\ell$ and~$p \geq 5$ be different primes.
    Let $E/\Q_\ell$ and $E'/\Q_\ell $ be elliptic curves with 
    $e=e'\in \{3,4\}$. 
    Suppose that $K/\Q_\ell$ and $K'/\Q_\ell$ are non-abelian, and that $E$ and~$E'$ obtain good reduction over the same~$F$ given by Theorem~\ref{thm:F}.  Then $K=K'$.
\end{proposition}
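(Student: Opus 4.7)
The plan is to show that $K$ and $K'$ are both unramified extensions of the common base field $K \cap F = K' \cap F$ of the same degree, and then invoke the uniqueness of unramified extensions of a local field inside a fixed algebraic closure.

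The first step is to apply Corollary~\ref{cor:intersection} to deduce $K \cap F = K' \cap F$; the extra hypothesis in that corollary for the case $(\ell,e) = (2,4)$ is subsumed by our assumption that both $E$ and $E'$ acquire good reduction over the same~$F$. The second step is to observe that, since $E/F$ has good reduction and $p \neq \ell$, the criterion of N\'eron--Ogg--Shafarevich gives $KF = F(E[p])$ unramified over~$F$. Because $K/\Q_\ell$ is Galois, so is $K/(K \cap F)$, and standard Galois theory yields $[K : K\cap F] = [KF : F]$; transporting the unramified condition from $KF/F$ down to $K/(K\cap F)$ shows the latter is itself unramified. The same reasoning applies to $K'/(K' \cap F)$.

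The third step uses Proposition~\ref{prop:G}: the cyclic group $\Gal(K/K\cap F) = \langle \tau \rangle$ has order $f$ or $2f$ according to the case, with $f = [\Knr : \Q_\ell]$ determined by the order~$r$ of~$\ell$ modulo~$p$ (and by~$e$), so this order is completely determined by the triple $(\ell, e, p)$; the identical formula governs $[K' : K' \cap F]$, giving $[K : K\cap F] = [K' : K' \cap F]$. Combined with Step~1, both $K$ and $K'$ are unramified extensions of the common local field $K \cap F$ inside $\overline{\Q_\ell}$ of the same degree, hence equal.

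There is no real technical obstacle: the two key ingredients — that $K \cap F = K' \cap F$ and that $[K : K\cap F]$ depends only on $(\ell, e, p)$ — are already supplied by Corollary~\ref{cor:intersection} and Proposition~\ref{prop:G}. The only background fact worth flagging is that the degree formula in Proposition~\ref{prop:G} ultimately rests on the vanishing $a_F(E) = 0$ established in Lemma~\ref{lem:al0}; once that is invoked, the remainder of the argument reduces to the uniqueness of unramified extensions of a local field of a given degree.
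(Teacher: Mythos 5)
Your argument works in the cases where $K\cap F=F$, but it breaks down exactly in the one case where the proposition requires real work, namely $e=4$ with $r\not\equiv 2\pmod 4$ (case (2)(b) of Proposition~\ref{prop:G}). The flaw is in your second step: unramifiedness of $KF/F$ does \emph{not} transport to $K/(K\cap F)$. Galois theory gives the isomorphism $\Gal(KF/F)\simeq\Gal(K/K\cap F)$, hence equality of degrees, but ramification is not preserved under this descent. (A toy example: $K=\Q_\ell(\sqrt{\ell})$ and $F=\Q_\ell(\sqrt{u\ell})$ with $u$ a non-square unit have $K\cap F=\Q_\ell$ and $KF/F$ unramified, yet $K/\Q_\ell$ is ramified.) Concretely, in case (2)(b) the ramification index of $K/\Q_\ell$ is $e=4$ while $K\cap F=K_2$ is a ramified quadratic, so $K/(K\cap F)$ has ramification index $2$ and is certainly not unramified. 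Your conclusion would force $K=K_2\cdot(\text{unramified extension})=K_3$, which is false since $[K:K_3]=2$ and $K/K_3$ is totally ramified.

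When $F\subseteq K$ (i.e.\ $e=3$, or $e=4$ with $r\equiv 2\pmod 4$), the extension $K/F$ is genuinely unramified (here $KF=K$ and N\'eron--Ogg--Shafarevich applies directly), and your argument coincides with the paper's: $K=\Knr\cdot F$ with $\Knr$ the unique unramified extension of the prescribed degree. But in case (2)(b) one must identify the ramified quadratic step $K/K_3$, and for this the paper uses an input your proof never touches: the minimality of the inertial field, $L=\Qun K=\Qun F$. Writing $K=K_3(\sqrt{t})$ and $F=\Q_\ell(\sqrt{s},\sqrt{t'})$, the equality $\Qun K_3(\sqrt{t})=\Qun K_3(\sqrt{t'})$ forces $t/t'$ to be a square in $\Qun K_3$, hence $K_3(\sqrt{t/t'})$ is the unramified quadratic extension of $K_3$, which pins down $t$ uniquely. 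Without some argument of this kind your proof cannot distinguish $K$ from the other ramified quadratic extensions of $K_3$ containing the same $K\cap F$, so the gap is essential rather than cosmetic.
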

\begin{proof}
	Let $E/\Q_\ell$ be an elliptic curve as in the statement, that is,
    $E$ has semistability defect $e \in \{3,4\} $, non-abelian
    $p$-torsion field $K=\Q_\ell(E[p])/\Q_\ell$ and obtains good reduction over a field~$F$ prescribed by Theorem~\ref{thm:F}.
We will show there is a unique possibility for~$K$, determined by~$F$, $\ell$, and~$p$.
The result then follows because of the assumption that both curves have good reduction over the same~$F$.
    Observe that, from Theorem~\ref{thm:F}, this assumption is automatically satisfied  except when $(\ell,e) = (2,4)$.

   Recall that $e=[K : \Knr]$ where $\Knr/\Q_\ell$ is the maximal unramified subextension of~$K/\Q_\ell$.

    Suppose that $e=3$. By \cite[Theorem 8]{torsf}, we have $[K:\Q_\ell]=6\delta,$ where $\delta$ is the order of
    $-\ell$ in $\F_p^*$. By Proposition~\ref{prop:G}, we have $F \subseteq K$ and by Theorem~\ref{thm:F}, $F$ is totally ramified of degree~$3$. As $e=3$ is the ramification degree of $K$, it follows that
    $K = \Knr\cdot F$ where $\Knr/\Q_\ell$ is the unique unramified extension of degree~$2\delta$.

   Suppose that $e=4$. Let $r$ be the order of~$\ell$ mod~$p$.

   If $r \equiv 2 \pmod 4$, then $K \cap F = F$ by Proposition~\ref{prop:G} and the conclusion follows exactly as for $e=3$. So assume $r \not\equiv 2 \pmod 4$. Then $K \cap F = K_2 = \Q_\ell(\sqrt{s})$ is ramified and given by $s=\ell$ for $\ell > 2$ and $s=-2$ for $\ell=2$.
Let $K_3:=\Knr \cdot K_2= \Knr(\sqrt{s})$. We have that $K/K_3$ is totally ramified of degree $e/2=2$, so we can write
$$K=K_3(\sqrt{t})=\Knr(\sqrt{s},\sqrt{t}) \quad \text{for some } \; t \in K_3^*/(K_3^*)^2. $$
Similarly, by Theorem~\ref{thm:F}, $F$ is totally ramified of degree $4$, and thus $F/K_2$ is quadratic. Hence,
we can write $F=\Q_\ell(\sqrt{s}, \sqrt{t'})$ for a uniquely determined $t' \in K_2^*/(K_2^*)^2$.

Recall from~\eqref{eq:d} that if $e=4$, then $d=[K:\Q_\ell]$ depends only on~$r$. Therefore, $\Knr/\Q_\ell$ is the unique unramified extension of degree~$d/4$, hence $K_3=\Knr\cdot K_2$ is uniquely determined. Thus, it suffices to prove that $t\in K_3^*/(K_3^*)^2$ is unique to conclude the proof.
Let $u:=\frac{t}{t'}\in K_3^*$.
If $u\in (K_3^*)^2$, then $F \subseteq K$, a contradiction. Thus $u \not\in (K_3^*)^2$, and hence $K_3(\sqrt{u})/K_3$ is a quadratic extension.
By minimality of the inertial field of~$E$, we have $L=\Qun K= \Qun F$, giving that $$L=\Qun K_3 (\sqrt{t})=\Qun K_3 (\sqrt{t'}).$$
This shows that $u=m^2$ for some $m\in \Qun K_3$. In particular, $\sqrt{u} \in \Qun K_3 = K_3^{\text{un}}$ and
so $K_3(\sqrt{u})$ is the unique unramified quadratic extension of $K_3$. Thus $u \in K_3^*/(K_3^*)^2$ is uniquely determined, which implies that $t=ut'\in K_3^*/(K_3^*)^2$ is uniquely determined.
\end{proof}
\begin{remark}
    For \(\ell \neq 2\), the previous proof can be shortened by observing that \(K_3\) has exactly two quadratic ramified extensions. Since \(F \not\subseteq K\) eliminates one of these possibilities, there remains a unique choice for \(K\). However, when \(\ell = 2\), there are more than two quadratic ramified extensions of \(K_3\).
\end{remark}

\subsection{The \texorpdfstring{$p$}{}-torsion module \texorpdfstring{$E[p]$}{}}

In this section, we show that for an elliptic curve $E/\Q_\ell$ with $e \in \{3,4\}$, non-abelian $p$-torsion and satisfying an additional torsion-related condition,
the $G_{\Q_\ell}$-module $E[p]$ is unique, except when $(\ell,e) = (2,4)$ in which case there are two possibilities. In the next section, we show that the torsion-related condition is not necessary, completing the proof of the uniqueness of $E[p]$ as a $G_{\Q_\ell}$-module when $(\ell,e) \neq (2,4)$.

\begin{lemma} \label{lem:e3}
Let $\ell$ and~$p \geq 5$ be different primes such that $\ell \equiv 2 \pmod{3}$.
Let $E/\Q_\ell$  and $E'/\Q_\ell $ be elliptic curves with $e=e'=3$ and a $3$-torsion point over $\Q_\ell$.
Then $E[p]$ and $E'[p]$ are isomorphic as $G_{\Q_\ell}$-modules.
\end{lemma}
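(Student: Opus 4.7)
The plan is to reduce the claim to the equality of two Brauer characters on a common finite Galois group, and then verify this by a direct trace computation.

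First, since $\ell \neq 3$ and $e = e' = 3$, case~(1) of Theorem~\ref{thm:F} applies to both curves: both $\Q_\ell(E[p])/\Q_\ell$ and $\Q_\ell(E'[p])/\Q_\ell$ are non-abelian and both $E, E'$ acquire good reduction over the same totally ramified cubic extension $F = \Q_\ell(\ell^{1/3})$. Hence Proposition~\ref{prop:samefields} yields $K = K'$, so $\rhobar_{E,p}$ and $\rhobar_{E',p}$ factor through the common quotient $G := \Gal(K/\Q_\ell)$, which by Proposition~\ref{prop:G}(1) has the presentation
\[
G = \langle \tau,\sigma : \tau^f = 1,\ \sigma^3 = 1,\ \tau\sigma\tau^{-1} = \sigma^{-1}\rangle,\qquad f = [\Knr:\Q_\ell].
\]
The computation inside the proof of Proposition~\ref{prop:G} also gives $f = 2\delta$ where $\delta$ is the order of $-\ell$ in $\F_p^\times$, so $|G| = 3f$ divides $6(p-1)$ and is coprime to $p$. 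By Brauer's theorem it therefore suffices to show $\Tr \rhobar_{E,p}(g) = \Tr \rhobar_{E',p}(g)$ for every $g \in G$.

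Two universal inputs determine all trace values. Since $\sigma$ lies in inertia and $\ell \neq p$, one has $\det \rhobar_{E,p}(\sigma) = \chi_p(\sigma) = 1$; combined with $\rhobar_{E,p}(\sigma)$ having order $3$, this pins its characteristic polynomial as $t^2 + t + 1$ and its trace as $-1$. Lemma~\ref{lem:al0} gives $a_F(E) = a_F(E') = 0$, so $\rhobar_{E,p}(\tau)$ has characteristic polynomial $t^2 + \ell$, trace $0$, and by Cayley--Hamilton $\rhobar_{E,p}(\tau)^2 = -\ell\,\Id$. Writing $\rhobar_{E,p}(\sigma^a\tau^j) = \rhobar_{E,p}(\sigma)^a\,\rhobar_{E,p}(\tau)^j$: for $j$ even, $\rhobar_{E,p}(\tau^j) = (-\ell)^{j/2}\Id$, so the trace equals $(-\ell)^{j/2}\cdot\Tr\rhobar_{E,p}(\sigma^a) \in \{2(-\ell)^{j/2},\,-(-\ell)^{j/2}\}$; for $j$ odd, the relation $\tau^j\sigma^a\tau^{-j} = \sigma^{-a}$ gives $(\sigma^a\tau^j)^2 = \tau^{2j}$, so $\rhobar_{E,p}(\sigma^a\tau^j)^2 = (-\ell)^j\,\Id$, and $\rhobar_{E,p}(\sigma^a\tau^j)$ cannot be a scalar (otherwise it would commute with $\rhobar_{E,p}(\sigma)$, forcing $\rhobar_{E,p}(\sigma)^2 = \Id$ against order $3$), so its characteristic polynomial is $t^2 - (-\ell)^j$ and its trace is $0$. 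Every value depends only on $\ell, p, a, j$, so the two characters agree on $G$.

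The main technical hurdle concerns the case $\ell = 2$: Lemma~\ref{lem:al0} is established there only after passing to an unramified quadratic twist that places $E$ in a normal form admitting a $\Q_2$-rational $3$-torsion point (cf.~\cite[Proposition~10, Lemma~15]{symplectic}). The hypothesis that $E$ and $E'$ already have a $\Q_\ell$-rational $3$-torsion point is precisely what lets us bypass this twist, ensuring that the trace identities above apply to the given pair of curves rather than to twisted auxiliary ones whose $p$-torsion modules could differ from those of $E$ and $E'$ by a character.
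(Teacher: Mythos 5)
Your proposal is correct, and it reaches the conclusion by a genuinely different route than the paper. You share the same setup — Theorem~\ref{thm:F}(1) and Proposition~\ref{prop:samefields} give $K=K'$, Proposition~\ref{prop:G} gives the presentation of $G=\Gal(K/\Q_\ell)$, and Lemma~\ref{lem:al0} gives $a_F=0$ — but you then conclude by character theory: $|G|=3f=6\delta$ is prime to $p$, so both representations are semisimple, and your case analysis on $\sigma^a\tau^j$ (scalar $(-\ell)^{j/2}\Id$ times an order-$3$ determinant-one matrix for $j$ even; trace zero for $j$ odd since the square is $(-\ell)^j\Id$ and the matrix cannot be scalar) shows the trace functions coincide, whence the modules are isomorphic. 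The paper instead builds the conjugating matrix explicitly: it chooses models of $E$ and $E'$ over $F$ reducing to the common residual curve $\Ebar:Y^2+Y=X^3$ (this is exactly where the rational $3$-torsion hypothesis is consumed, via \cite[Lemma~15]{symplectic}), lifts a basis of $\Ebar[p]$ through both reduction isomorphisms so that the Frobenius matrices agree on the nose, and uses $\Aut(\Ebar)$ to see the inertia generators differ at most by inversion, which is undone by conjugating by the Frobenius matrix. Your approach trades this explicit construction for Brauer--Nesbitt; the paper's approach has the advantage of producing the isomorphism concretely, which feeds into the later symplectic-type analysis. Two small points. First, the appeal to ``Brauer's theorem'' should be to Brauer--Nesbitt together with Maschke, and strictly one needs equality of characteristic polynomials rather than of traces; this is harmless here because both representations have determinant $\chi_p$, so equal traces give equal characteristic polynomials. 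Second, your closing paragraph about $\ell=2$ is not actually needed: Lemma~\ref{lem:al0} already asserts $a_F(E)=0$ for the original curve (an unramified quadratic twist at worst changes the sign of $a_F$, which is irrelevant when it vanishes), so your argument never uses the $3$-torsion hypothesis at all — consistent with Theorem~\ref{T:mainTame3}, where the paper removes that hypothesis by a $3$-isogeny reduction.
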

\begin{proof}
We will show that there is $P \in \GL_2(\F_p)$ such that $P\rhobar_{E,p}(g)P^{-1}=\rhobar_{E',p}(g)$ for all $g \in G_{\Q_\ell}$.
From Theorem~\ref{thm:F} (1) we know that $K=\Q(E[p])$ and $K'=\Q(E'[p])$ are non-abelian. Hence, from Proposition~\ref{prop:samefields}, $K = K'$ is the field fixed by
$\ker \rhobar_{E,p} = \ker \rhobar_{E',p} \subset G_{\Q_\ell}$. From Propositions~\ref{prop:G},
we also have $F = \Q_\ell(\ell^{1/3}) \subset K$
and both $E$ and~$E'$ obtain good reduction over~$F$.
Moreover, the group structure of $G = \Gal(K/\Q_\ell)$ is given by
\[G \cong \langle \tau, \sigma  : \tau^f=1, \sigma^3=1, \tau \sigma  \tau^{-1} = \sigma^{-1} \rangle\]
where $f = [\Knr : \Q_\ell]$, $\sigma$ is a generator of inertia and $\tau$ acts as $\Frob_F$.

We claim that we can choose bases of $E[p]$ and $E'[p]$ such that
\[
N = N' \qquad \text{ and } \qquad A^s = A' \quad \text{ with } \; s \in \{ \pm 1 \},
\]
where $A$, $A'$, $N$ and $N'$ are the matrices representing
$\rhobar_{E,p}(\sigma)$, $\rhobar_{E',p}(\sigma)$, $\rhobar_{E,p}(\tau)$ and $\rhobar_{E',p}(\tau)$.
So, if $s=1$ we can take $P$ to be the identity; if $s=-1$ we take $P=N$
and note that
$$P N P^{-1} = N = N', \quad \text{ and } \quad P A P =A^{-1} =A',$$
as desired, where we used the presentation of $G$ for the second equality.

We will now prove the claim.
It follows from \cite[Lemma~15]{symplectic} we
can choose minimal models for $E$, $E'$ over $F$ reducing to $ \Ebar : Y^2 + Y = X^3$.
Moreover, $\gamma_{E}(\sigma)^s = \gamma_{E'}(\sigma)$ in $\Aut(\Ebar)$ with
$s= \pm 1$.

Write $\rhobar$ for the representation giving the action of $\Gal(\Fbar_\ell / \F_\ell)$ on $\Ebar[p]$. The reduction morphisms $\varphi : E_F[p] \to \Ebar[p]$ and $\varphi' : E'_F[p] \to \Ebar[p]$  satisfy $\varphi \circ \rhobar_{E,p}(\Frob_F) = \rhobar(\bar{\tau}) \circ \varphi$ and $\varphi' \circ \rhobar_{E',p}(\Frob_F) = \rhobar(\bar{\tau}) \circ \varphi'$.
Therefore $\varphi \circ \rhobar_{E,p}(\tau) = \rhobar(\bar{\tau}) \circ \varphi$ and $\varphi' \circ \rhobar_{E',p}(\tau) = \rhobar(\bar{\tau}) \circ \varphi'$.

Fix a basis for $\Ebar[p]$ and let $\bar{N}$ be the matrix representing $\rhobar(\bar{\tau})$ in it.
Lift the fixed basis of $\Ebar[p] $ to a basis of $E[p]$ and $E'[p]$ via the reduction morphisms, so that in these bases the matrices representing $\varphi$ and $\varphi'$ are the identity.
Thus $N = N' = \bar{N}$. Finally, it follows from $\psi(\gamma_E(\sigma))^s = \psi(\gamma_{E'}(\sigma))$ and \eqref{E:phi2} that $A' = A^s$ in the same bases, as claimed.
\end{proof}

\begin{lemma} \label{lem:e4}
Let $\ell $ and $p\geq 5$ be different primes such that $\ell \equiv 3 \pmod{4}$.
Let $E/\Q_\ell$  and $E'/\Q_\ell $ be elliptic curves with $e=e'=4$
and full $2$-torsion over $F = \Q_\ell(\ell^{1/4})$.
Then $E[p]$ and $E'[p]$ are isomorphic as $G_{\Q_\ell}$-modules
\end{lemma}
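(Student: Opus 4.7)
The plan is to mirror the proof of Lemma~\ref{lem:e3} step by step, with the order-3 inertia replaced by order-4 inertia. First, since $\ell \equiv 3 \pmod 4$, that is $\ell \equiv -1 \pmod 4$, Theorem~\ref{thm:F}(1) ensures that both $K/\Q_\ell$ and $K'/\Q_\ell$ are non-abelian, and that $E, E'$ both acquire good reduction over the common totally ramified extension $F = \Q_\ell(\ell^{1/4})$. Since we are in the case $(\ell,e) \neq (2,4)$, the field~$F$ is unique and Proposition~\ref{prop:samefields} yields $K = K'$. Thus $\ker \rhobar_{E,p} = \ker \rhobar_{E',p}$ and both representations factor through the same group $G = \Gal(K/\Q_\ell)$ whose presentation is given by Proposition~\ref{prop:G}(2); in both subcases (a) and (b), we retain the braid relation $\tau \sigma \tau^{-1} = \sigma^{-1}$ and the fact that $\tau$ acts on~$K$ as~$\Frob_F$.

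Next, I would use the full $2$-torsion hypothesis over $F$ to pin down a common residual model. By Lemma~\ref{lem:al0} we have $a_F(E) = a_F(E') = 0$, so the reductions $\Ebar, \Ebar'$ over $\F_\ell$ are supersingular. Full $2$-torsion over $F$ implies full $2$-torsion of $\Ebar, \Ebar'$ over $\F_\ell$; for $\ell \equiv 3 \pmod 4$ there is essentially one supersingular elliptic curve with this property, namely the curve $\Ebar : Y^2 = X^3 - X$ of $j$-invariant $1728$, whose geometric automorphism group $\Aut(\Ebar)$ is cyclic of order~$4$. Here one invokes the analog of \cite[Lemma~15]{symplectic} (in the spirit of \cite[Lemma~21]{symplectic}) to fix minimal models of $E/F$ and $E'/F$ reducing to this common $\Ebar$, and to conclude that $\gamma_{E'}(\sigma) = \gamma_E(\sigma)^s$ for some $s \in \{\pm 1\}$, since both $\gamma_E, \gamma_{E'}$ are injections of the cyclic group $\Phi \cong C_4$ into $\Aut(\Ebar) \cong C_4$.

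With this setup, I would fix a basis of $\Ebar[p]$, lift it through the reduction morphisms $\varphi$ and~$\varphi'$ to bases of $E[p]$ and $E'[p]$ in which $\varphi, \varphi'$ are represented by the identity, and let $A, N$ and $A', N'$ be the matrices for $\rhobar_{E,p}(\sigma), \rhobar_{E,p}(\tau)$ and $\rhobar_{E',p}(\sigma), \rhobar_{E',p}(\tau)$, respectively. Since $\tau$ and $\tau'$ both reduce to the Frobenius $\bar{\tau}$ on $\Ebar$, we obtain $N = N' = \bar{N}$; and from~\eqref{E:phi2} applied to both curves we obtain $A' = A^s$ with $s \in \{\pm 1\}$. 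Taking $P = \Id$ if $s = 1$ and $P = N$ if $s = -1$, the braid relation $\tau \sigma \tau^{-1} = \sigma^{-1}$ yields
\[
P N P^{-1} = N = N', \qquad P A P^{-1} = A^{-1} = A',
\]
so $P$ realises the desired isomorphism of $G_{\Q_\ell}$-modules $E[p] \simeq E'[p]$.

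The main obstacle is step~two: extracting from the full $2$-torsion hypothesis a common reduction $\Ebar$ together with the equality $\gamma_{E'}(\sigma) = \gamma_E(\sigma)^{\pm 1}$ in $\Aut(\Ebar)$. This is precisely the role played by \cite[Lemma~15]{symplectic} in Lemma~\ref{lem:e3}, and one needs the analogous local structural statement for $e = 4$ and $\ell \equiv 3 \pmod 4$; once this is in place, the remaining linear-algebra argument is essentially identical to the one used for $e = 3$.
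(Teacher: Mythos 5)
Your proposal follows the paper's proof essentially verbatim: Theorem~\ref{thm:F} and Proposition~\ref{prop:samefields} give $K = K'$, Proposition~\ref{prop:G} gives the presentation of $G$ with $\tau$ acting as $\Frob_F$, and \cite[Lemma~21]{symplectic} (the $e=4$ analogue of the Lemma~15 used for $e=3$) supplies the common residual model and the relation $\gamma_{E'}(\sigma) = \gamma_E(\sigma)^{\pm 1}$, after which the conjugation by $P \in \{\Id, N\}$ is identical to the paper's argument. The only slight imprecision is your side remark that $\Aut(\Ebar) \cong C_4$, which fails for $\ell = 3$ (where $j = 0 = 1728$ and the geometric automorphism group has order $12$, so injectivity of $\gamma_E$, $\gamma_{E'}$ alone would not force their images to coincide); this does not affect the proof, since the relation $\gamma_{E'}(\sigma) = \gamma_E(\sigma)^{\pm 1}$ is taken from the cited lemma rather than from that counting argument.
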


\begin{proof}
We will show that there is $P \in \GL_2(\F_p)$ such that $P\rhobar_{E,p}(g)P^{-1}=\rhobar_{E',p}(g)$ for all $g \in G_{\Q_\ell}$.
From Theorem~\ref{thm:F} (1) we know that $K=\Q_\ell(E[p])$ and $K'=\Q_\ell(E'[p])$ are non-abelian extensions of~$\Q_\ell$,
and both $E/\Q_\ell$ and~$E'$ obtain good reduction over~$F$.
Therefore, from Proposition~\ref{prop:samefields}, we have that $K = K'$ is the field fixed by
$\ker \rhobar_{E,p} = \ker \rhobar_{E',p} \subset G_{\Q_\ell}$.

From Propositions~\ref{prop:G} the group structure of $G = \Gal(K/\Q_\ell)$ is given by
$$G \cong \langle \tau, \sigma  : \tau^n=1, \sigma^3=1, \tau \sigma  \tau^{-1} = \sigma^{-1} \rangle$$
where $n \in \{f,2f\}$, $f = [\Knr : \Q_\ell]$, $\sigma$ is any generator of inertia and $\tau$ acts as $\Frob_F$.
As in the proof of Lemma~\ref{lem:e3}, we claim that we can choose bases of $E[p]$ and $E'[p]$ such that
\[
N = N' \qquad \text{ and } \qquad A^s = A' \quad \text{ with } \; s \in \{ \pm 1 \},
\]
where $A$, $A'$, $N$ and $N'$ are the matrices representing
$\rhobar_{E,p}(\sigma)$, $\rhobar_{E',p}(\sigma)$, $\rhobar_{E,p}(\tau)$ and $\rhobar_{E',p}(\tau)$.
Therefore, the existence of~$P$ follows exactly as in that proof, because the value of $n$ does not play a r\^ole.
The claim also follows as in the proof of Lemma~\ref{lem:e3}, where we replace \cite[Lemma 15]{symplectic} with \cite[Lemma 21]{symplectic}. Namely,
we can choose minimal models for $E$, $E'$ over $F$ reducing to $ \Ebar : Y^2 - Y = X^3$ and, moreover, $\gamma_{E}(\sigma)^s = \gamma_{E'}(\sigma)$ in $\Aut(\Ebar)$ with $s= \pm 1$; the rest of the argument is identical.
\end{proof}

\begin{lemma} \label{lem:e3e4}
Let $(\ell,e) = (2,4)$ or~$(3,3)$ and $p \geq 5$ a prime. Let $E/\Q_\ell$  and $E'/\Q_\ell $ be elliptic curves with $e=e'$. Suppose that $K/\Q_\ell$ and $K'/\Q_\ell$ are non-abelian and that $E$ and~$E'$ obtain good reduction over the same field $F$ described in Theorem~\ref{thm:F}.
Then $E[p]$ and $E'[p]$ are isomorphic as $G_{\Q_\ell}$-modules
\end{lemma}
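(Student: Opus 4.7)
The plan is to follow the strategy used in Lemmas~\ref{lem:e3} and~\ref{lem:e4}: choose bases of $E[p]$ and $E'[p]$ in which $\tau$ is represented by the same matrix $N = N'$ and $\sigma$ by matrices satisfying $A^s = A'$ for some $s \in \{\pm 1\}$, and then take the conjugating matrix $P$ to be the identity when $s = 1$ and $N$ when $s = -1$, using the dihedral relation $\tau\sigma\tau^{-1} = \sigma^{-1}$ from Proposition~\ref{prop:G} to verify $PAP^{-1} = A^{-1} = A'$ in the latter case.

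First, since by hypothesis $E$ and $E'$ acquire good reduction over the same $F$, Proposition~\ref{prop:samefields} gives $K = K'$, hence $\rhobar_{E,p}$ and $\rhobar_{E',p}$ factor through a common Galois group $G = \Gal(K/\Q_\ell)$. By Proposition~\ref{prop:G} this $G$ is of dihedral type, generated by $\sigma$ (generating inertia, of order $e$) and $\tau$ (acting as $\Frob_F$).

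Next, I would produce a common residual elliptic curve $\Ebar$. For $(\ell,e)=(3,3)$, \cite[Lemma~17]{symplectic} supplies minimal models of $E/F$ and $E'/F$ both reducing to $\Ebar : Y^2 = X^3 + X$ over $\F_3$. For $(\ell,e)=(2,4)$, the common-$F$ hypothesis together with Theorem~\ref{thm:F}(3) places both curves in the same branch ($F_1$ or $F_2$); if $F$ is given by $f_2$, I would replace $E$ and $E'$ simultaneously by their quadratic twists by $-1$, an operation that preserves the $G_{\Q_\ell}$-isomorphism class of the $p$-torsion on both sides symmetrically, reducing to the case where $F$ is given by $f_1$, and then \cite[Lemma~22]{symplectic} yields the common reduction $\Ebar : Y^2 + Y = X^3$ over $\F_2$. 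The same references, together with the compatibility relation~\eqref{E:phi2}, should further give $\gamma_E(\sigma)^s = \gamma_{E'}(\sigma)$ in $\Aut(\Ebar)$ for some $s \in \{\pm 1\}$, exactly as in the corresponding steps of Lemmas~\ref{lem:e3} and~\ref{lem:e4}.

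Finally, I would fix a basis of $\Ebar[p]$ and lift it through $\varphi$ and $\varphi'$ to bases of $E[p]$ and $E'[p]$ in which both reduction morphisms are represented by the identity. Because $\tau$ descends to the Frobenius endomorphism of the residue field on both sides, $N = N'$ is simply the matrix of Frobenius on $\Ebar[p]$; and applying~\eqref{E:phi2} to both curves together with $\gamma_E(\sigma)^s = \gamma_{E'}(\sigma)$ yields $A^s = A'$. The conjugating matrix $P$ is then chosen as in the opening paragraph, which concludes the proof. The main delicacy I anticipate is justifying $\gamma_E(\sigma)^s = \gamma_{E'}(\sigma)$ cleanly: since $\Aut(\Ebar)$ has order $12$ in the $(3,3)$ case and $24$ in the $(2,4)$ case, it contains elements of order $e$ beyond a single inverse pair, so narrowing down the image of $\gamma_E$ to one such pair must rely on the explicit form of the residual models supplied by \cite[Lemmas~17,~22]{symplectic}.
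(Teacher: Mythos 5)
Your proposal is correct and follows essentially the same route as the paper's proof: reduce to a common field $K=K'$ via Proposition~\ref{prop:samefields}, normalize both curves to a common residual model over the same $F$ (twisting both by $-1$ in the $(2,4)$ case to land in the $F_1$ branch), and use the reduction morphisms plus the relation~\eqref{E:phi2} to arrange $N=N'$ and $A^s=A'$, concluding with $P=\Id$ or $P=N$ exactly as in Lemmas~\ref{lem:e3} and~\ref{lem:e4}. The "delicacy" you flag — pinning $\gamma_E(\sigma)$ and $\gamma_{E'}(\sigma)$ down to a single inverse pair — is indeed where the paper leans on the explicit lemmas of \cite{symplectic} (adjusting $\sigma$ to $\sigma^2$ or $\sigma^3$ as needed to match the generator fixed there), so your treatment matches the paper's in substance.
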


\begin{proof}
We want $P \in \GL_2(\F_p)$ such that $P\rhobar_{E,p}(g)P^{-1}=\rhobar_{E',p}(g)$ for all $g \in G_{\Q_\ell}$.
From Proposition~\ref{prop:samefields}, we have that $K = K'$ is the field fixed by
$\ker \rhobar_{E,p} = \ker \rhobar_{E',p} \subset G_{\Q_\ell}$.

Suppose first $(\ell,3) = (3,3)$.
From Propositions~\ref{prop:G} we have $K \cap F = F$ and
the group structure of $G = \Gal(K/\Q_\ell)$ is given by
$G \cong \langle \tau, \sigma  : \tau^f=1, \sigma^3=1, \tau \sigma  \tau^{-1} = \sigma^{-1} \rangle$
where $f = [\Knr : \Q_\ell]$, $\sigma$ is any generator of inertia and $\tau$ acts as $\Frob_F$.

From \cite[Lemma~18]{symplectic} we
can choose models with good reduction for $E$, $E'$ over $F$ reducing to $ \Ebar : Y^2 = X^3 + X$.
Moreover, possibly after replacing $\sigma$ by $\sigma^2$, by the same lemma,
we also have $\gamma_{E}(\sigma)^s = \gamma_{E'}(\sigma)$ in $\Aut(\Ebar)$ with $s= \pm 1$.
Now, arguing as at the end of the proof of Lemma~\ref{lem:e3}
(noting the hypothesis on the existence of a $3$-torsion
point is not used there) we can choose
bases of $E[p]$ and $E'[p]$ such that
\[
N = N' \quad \text{ and } \quad A^s = A' \quad \text{ with } \; s \in \{\pm 1\},
\]
where $A$, $A'$, $N$ and $N'$ are the matrices representing
$\rhobar_{E,p}(\sigma)$, $\rhobar_{E',p}(\sigma)$,
$\rhobar_{E,p}(\tau)$ and $\rhobar_{E',p}(\tau)$.
The existence of~$P$ follows exactly as in the proof of Lemma~\ref{lem:e3}.

Suppose now $(\ell,e) = (2,4)$.
Since taking quadratic twists of $E$ and $E'$ by the same element preserves the existence of a $p$-torsion isomorphism,
by Theorem~\ref{thm:F} (3), after twisting both curves by -1 if needed, we reduce to the case of $F = F_1$.

By Proposition~\ref{prop:G}, we have $G \cong \langle \tau, \sigma  : \tau^n=1, \sigma^4=1, \tau \sigma  \tau^{-1} = \sigma^{-1} \rangle$
where $n\in \{f,2f \}$, $f = [\Knr : \Q_\ell]$, $\sigma$ is any generator of inertia and $\tau$ acts as $\Frob_F$.
By replacing $\sigma$ by $\sigma^3$ if necessary, we can assume that
$\sigma$ lifts to the generator $\sigma \in \Phi$
given by \cite[Lemma~23]{symplectic}.

Observe that $\tilde{c}_6(E) \equiv \pm 1 \pmod{4}$ and the value of
$\alpha_1 = \alpha_1(E)$ in \cite[(20.4)]{symplectic} is $\alpha_1 = 0$ if $\tilde{c}_6(E) \equiv 1 \pmod{4}$ and
$\alpha_1 = 1$ if $\tilde{c}_6(E) \equiv -1 \pmod{4}$; the same relation is true between $\alpha_1(E')$ and $\tilde{c}_6(E') \equiv \pm 1 \pmod{4}$.
It follows from \cite[Lemma~23]{symplectic} that we
can choose minimal models for $E$, $E'$ over $F$ reducing to $ \Ebar : Y^2 + Y = X^3$ and that $\gamma_E(\sigma)^s = \gamma_{E'}(\sigma)$ in $\Aut(\Ebar)$ with $s \in \{\pm 1\}$. The result now follows
as at the end of the proof of Lemma~\ref{lem:e4} (noting that
the full 2-torsion over $F$ hypothesis is not
used there).
\end{proof}

\subsection{Revisiting symplectic criteria}
\label{sec:revisiting}
We show that for some local symplectic criteria in~\cite{symplectic}, the $E[p] \simeq E'[p]$ as $G_{\Q_\ell}$-modules hypothesis is unnecessary. Specifically, for Theorems 1, 4, 5, and~6 in {\it loc. cit.}, we can include this isomorphism in the conclusion.

The following is the revised version of \cite[Theorem 1]{symplectic}.

\begin{theorem} Let $\ell$ and~$p \geq 5$ be different primes such that $\ell \equiv 2 \pmod{3}$.
Let $E/\Q_\ell$ and $E'/\Q_\ell$ be elliptic curves with $e=e'=3$.

Set $t=1$ if exactly one of $E$, $E'$ has a $3$-torsion point defined over $\Q_\ell$ and $t=0$ otherwise.

Set $r=0$ if $\upsilon_\ell(\Delta_m) \equiv \upsilon_\ell(\Delta_m') \pmod{3}$ and $r=1$ otherwise.

Then $E[p]$ and $E'[p]$ are isomorphic $G_{\Q_{\ell}}$-modules.
Moreover,
\[
 E[p] \text{ and } E'[p] \quad \text{are symplectically isomorphic} \quad \Leftrightarrow \quad \left(\frac{\ell}{p}\right)^r \left (\frac{3}{p} \right)^t = 1.
\]
\label{T:mainTame3}
\end{theorem}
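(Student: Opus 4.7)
The plan is to first establish the $G_{\Q_\ell}$-module isomorphism $E[p] \simeq E'[p]$ unconditionally under the theorem's hypotheses, thereby removing the assumption that appeared in \cite[Theorem~1]{symplectic}, and then invoke that original theorem for the symplectic criterion.

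Since $\ell \equiv 2 \pmod 3$ forces $\ell \neq 3$, Theorem~\ref{thm:F}(1) shows that $K = \Q_\ell(E[p])$ and $K' = \Q_\ell(E'[p])$ are non-abelian extensions of $\Q_\ell$ and that both $E$ and $E'$ acquire good reduction over the unique totally ramified cubic extension $F = \Q_\ell(\ell^{1/3})$. Proposition~\ref{prop:samefields} then yields $K = K'$, so $\rhobar_{E,p}$ and $\rhobar_{E',p}$ cut out isomorphic copies of the group $G$ presented in Proposition~\ref{prop:G}(1) as $\langle \tau, \sigma : \tau^f = 1, \sigma^3 = 1, \tau \sigma \tau^{-1} = \sigma^{-1} \rangle$, with $\tau$ acting as $\Frob_F$ and $\sigma$ generating inertia.

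To extend Lemma~\ref{lem:e3}, which requires both curves to carry a $\Q_\ell$-rational $3$-torsion point, I would reduce to this setting via an unramified quadratic twist, following the strategy employed in \cite[Proposition~10]{symplectic} for the case $(\ell,e) = (2,3)$. The effect of such a twist on the $p$-torsion is to tensor $\rhobar_{E,p}$ by the mod-$p$ unramified quadratic character $\chi$ of $\Q_\ell$. The key observation is that $\rhobar_{E,p} \otimes \chi \simeq \rhobar_{E,p}$ as $G_{\Q_\ell}$-modules: by Lemma~\ref{lem:al0} we have $a_F(E) = 0$, so the Cayley--Hamilton relation gives $\rhobar_{E,p}(\tau)^2 = -\ell \cdot \Id$, and a direct matrix computation using the relation $\tau\sigma\tau^{-1} = \sigma^{-1}$ shows that every element $\tau^k \sigma^j$ with $k$ odd --- exactly those on which $\chi$ is nontrivial --- has vanishing trace. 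Hence twisting by $\chi$ preserves the $G_{\Q_\ell}$-isomorphism class of $E[p]$, and after replacing $E$ and $E'$ (if needed) by their unramified quadratic twists to ensure both possess a $\Q_\ell$-rational $3$-torsion point, Lemma~\ref{lem:e3} delivers the desired isomorphism $E[p] \simeq E'[p]$.

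With the isomorphism in place, the equivalence $E[p] \text{ and } E'[p] \text{ symplectically isomorphic} \Leftrightarrow (\ell/p)^r(3/p)^t = 1$ is exactly the content of the original \cite[Theorem~1]{symplectic}, whose $E[p] \simeq E'[p]$ hypothesis is now automatic. The parameter $r$ encodes the effect on the Weil pairing of the discriminant-valuation mismatch modulo $3$ (contributing $(\ell/p)$ when $v_\ell(\Delta_m) \not\equiv v_\ell(\Delta_m') \pmod 3$), while $t$ encodes the Weil-pairing shift produced by a twist that forces a $3$-torsion point (contributing $(3/p)$ when exactly one of $E,E'$ already has one). The main technical obstacle is the $\chi$-invariance step: one must handle the matrix representation of $G$ in two regimes according to whether $p \equiv 1$ or $2 \pmod 3$ --- the former permitting a diagonal form for $\rhobar_{E,p}(\sigma)$ and the latter forcing a rational canonical form --- and verify in both that $\tau^k \sigma^j$ has vanishing trace for every odd $k$, thereby justifying descent through the unramified twist.
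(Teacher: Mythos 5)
Your proof is correct, and its skeleton coincides with the paper's: reduce to the situation where both curves have a rational $3$-torsion point, invoke Lemma~\ref{lem:e3} for the module isomorphism, and then quote \cite[Theorem~1]{symplectic} for the symplectic criterion. The one genuine difference is the reduction step. The paper uses the $3$-isogeny supplied by \cite[Proposition~10]{symplectic}: since $p\geq 5$, a degree-$3$ isogeny $E\to W$ restricts to a $G_{\Q_\ell}$-isomorphism $E[p]\simeq W[p]$ for free, so nothing further needs to be checked. You instead pass to the unramified quadratic twist, which forces you to prove the self-twist property $\rhobar_{E,p}\otimes\chi\simeq\rhobar_{E,p}$. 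That property is true and your trace computation establishes it, but two points deserve to be made explicit. First, the claim that the unramified quadratic twist acquires a rational $3$-torsion point when $E$ lacks one: this holds because $\rhobar_{E,3}$ is reducible with unramified diagonal characters whose product is $\chi_3$, so the diagonal is $\{1,\chi_3\}$ in some order, and for $\ell\equiv 2\pmod 3$ the character $\chi_3$ \emph{is} the unramified quadratic character, so twisting by it trivializes the subcharacter. Second, equal traces only give equal semisimplifications, so you need $\rhobar_{E,p}$ to be irreducible; this holds because inertia acts with eigenvalues $\zeta_3^{\pm1}$ that are swapped by $\tau$. In fact, noting that $\rhobar_{E,p}$ is induced from the abelian index-two subgroup $\langle\tau^2,\sigma\rangle$ (the stabilizer of the unramified quadratic subextension) gives $\rhobar_{E,p}\otimes\chi\simeq\rhobar_{E,p}$ immediately and lets you avoid the case split on $p\bmod 3$ entirely. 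Net assessment: both routes are valid; the paper's is shorter because the isogeny does the bookkeeping automatically, while yours trades that for a (correct but slightly more delicate) representation-theoretic argument that also relies on Lemma~\ref{lem:al0}.
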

\begin{proof}
We know from \cite[Proposition 10]{symplectic} that,
for $E/\Q_\ell$ any elliptic curve as in the statement,
either $E$ has $3$-torsion point over $\Q_\ell$ or
$E$ is $3$-isogenous to a curve~$W/\Q_\ell$ with a $3$-torsion point over $\Q_\ell$.  Since $p \geq 5$, this $3$-isogeny induces an isomorphism $E[p] \simeq W[p]$ of $G_{\Q_\ell}$-modules.

Therefore, after replacing $E$ and/or $E'$ by $W$ and/or $W'$ if needed, we can assume that both $E$ and $E'$ have a $3$-torsion point over $\Q_\ell$. Thus $E[p] \simeq E'[p]$ by Lemma~\ref{lem:e3}.

The claim about the symplectic type now
follows from \cite[Theorem 1]{symplectic}.
\end{proof}

The following is the revised version of \cite[Theorem 4]{symplectic}.

\begin{theorem} Let $p \geq 5$ be a prime.
Let $E/\Q_3$ and $E'/\Q_3$ be elliptic curves
with $e=e'=3$.
Suppose that $\tilde{\Delta} \equiv 2 \pmod{3}$ and $\tilde{\Delta}' \equiv 2 \pmod{3}$.

Let $r=0$ if $\tilde{c}_6 \equiv \tilde{c}_6' \pmod{3}$ and $r=1$ otherwise.

Then $E[p]$ and $E'[p]$ are isomorphic $G_{\Q_3}$-modules.
Moreover,
\[
 E[p] \text{ and } E'[p] \quad \text{are symplectically isomorphic} \quad \Leftrightarrow \quad \left (\frac{3}{p} \right)^r = 1.
\]
\label{T:mainWild3}
\end{theorem}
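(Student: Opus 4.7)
The plan is to follow the same strategy used for Theorem~\ref{T:mainTame3}: first establish the $G_{\Q_3}$-module isomorphism $E[p] \simeq E'[p]$ by appealing to our uniqueness results, and then invoke the original statement of \cite[Theorem 4]{symplectic} (which assumed this isomorphism) to deduce the symplectic criterion. The only new work is to verify that the hypotheses of Lemma~\ref{lem:e3e4} are automatic in the present setting.

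First, I would observe that the assumptions $\tilde{\Delta} \equiv 2 \pmod{3}$ and $\tilde{\Delta}' \equiv 2 \pmod{3}$, together with Theorem~\ref{thm:F} (2), imply that both $K = \Q_3(E[p])$ and $K' = \Q_3(E'[p])$ are non-abelian extensions of $\Q_3$. Crucially, in case $(\ell,e) = (3,3)$ of Theorem~\ref{thm:F} the field of good reduction is the \emph{unique} totally ramified cubic extension $F/\Q_3$ defined by $x^3 + 3x^2 + 3$, so both $E$ and $E'$ automatically obtain good reduction over this same~$F$. All hypotheses of Lemma~\ref{lem:e3e4} for the case $(\ell,e) = (3,3)$ are therefore satisfied.

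Applying Lemma~\ref{lem:e3e4} then yields an isomorphism $E[p] \simeq E'[p]$ of $G_{\Q_3}$-modules, proving the first conclusion of the theorem. With this isomorphism in hand, the symplectic/anti-symplectic classification claimed in the theorem follows from \cite[Theorem 4]{symplectic}, whose hypothesis is now fulfilled and whose conclusion reads exactly $(3/p)^r = 1$ with $r$ defined via comparison of $\tilde{c}_6$ and $\tilde{c}_6'$ modulo~$3$.

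The main conceptual obstacle was in fact already overcome in the preceding section, namely in proving Lemma~\ref{lem:e3e4}; the present theorem is essentially a packaging of that uniqueness statement together with the computation from \cite{symplectic}. One small point requiring attention is that Lemma~\ref{lem:e3e4} requires both curves to acquire good reduction over the \emph{same} $F$, but for $(\ell,e) = (3,3)$ this is automatic from Theorem~\ref{thm:F} (2), in contrast with the more delicate case $(\ell,e) = (2,4)$ where one must first adjust by a quadratic twist by $-1$ to ensure it.
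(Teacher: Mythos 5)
Your proposal matches the paper's proof: both establish that $K$ and $K'$ are non-abelian via Theorem~\ref{thm:F}~(2), note that $F$ is uniquely determined in the case $(\ell,e)=(3,3)$ so that both curves acquire good reduction over the same field, apply Lemma~\ref{lem:e3e4} to get $E[p]\simeq E'[p]$, and then invoke \cite[Theorem 4]{symplectic} for the symplectic criterion. The argument is correct and essentially identical to the one in the paper.
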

\begin{proof}
From Theorem~\ref{thm:F} we know that the $K = \Q_3(E[p])$
and $K = \Q_3(E'[p])$ are non-abelian extensions of $\Q_3$. Moreover, both curves $E$ and $E'$ obtain good reduction over the same field~$F$ given in that theorem. Thus $E[p] \simeq E'[p]$ by Lemma~\ref{lem:e3e4}.

The claim about the symplectic type now
follows from \cite[Theorem 4]{symplectic}.
\end{proof}

The following is the revised version of \cite[Theorem 5]{symplectic}.

\begin{theorem} Let $\ell$ and~$p \geq 5$ be different primes such that $\ell \equiv 3 \pmod{4}$.
Let $E/\Q_\ell$ and $E'/\Q_\ell$ be elliptic curves with  $e=e'=4$.

Set $r=0$ if $\upsilon_\ell(\Delta_m) \equiv \upsilon_\ell(\Delta_m') \pmod{4}$ and $r=1$ otherwise.

Set $t=1$ if exactly one of $\tilde{\Delta}$, $\tilde{\Delta}'$ is a square mod~$\ell$ and $t=0$ otherwise.

Then $E[p]$ and $E'[p]$ are isomorphic $G_{\Q_{\ell}}$-modules. Moreover,
\[
 E[p] \text{ and } E'[p] \quad \text{are symplectically isomorphic} \quad \Leftrightarrow \quad \left(\frac{\ell}{p}\right)^r \left (\frac{2}{p} \right)^t = 1.
\]
\label{T:mainTame4}
\end{theorem}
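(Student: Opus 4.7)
The proof will follow the template of the revised Theorem~\ref{T:mainTame3}. Since $\ell \equiv 3 \pmod{4}$ and $e = e' = 4$, Theorem~\ref{thm:F}(1) ensures that both $K = \Q_\ell(E[p])$ and $K' = \Q_\ell(E'[p])$ are non-abelian extensions of $\Q_\ell$, and both curves acquire good reduction over the totally ramified degree-$4$ extension $F = \Q_\ell(\ell^{1/4})$. By Proposition~\ref{prop:samefields}, $K = K'$. Lemma~\ref{lem:e4} will supply the module isomorphism $E[p] \simeq E'[p]$ as soon as both $E$ and $E'$ have full $2$-torsion over~$F$.

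The main task is therefore to reduce to this case, in strict analogy with the role played by \cite[Proposition~10]{symplectic} in the proof of Theorem~\ref{T:mainTame3}. A first observation is that, under our hypotheses, the image $\rhobar_{E,2}(G_{\Q_\ell}) \subseteq \GL_2(\F_2) \cong S_3$ must be cyclic of order exactly~$2$: the normal subgroup $\rhobar_{E,2}(I_\ell)$ is non-trivial (by N\'eron--Ogg--Shafarevich together with $e=4$) and is a quotient of the cyclic group $\Phi \simeq \Z/4\Z$, so its order divides $\gcd(4,|S_3|)=2$; in $S_3$ a normal subgroup of order~$2$ must coincide with the ambient group. Consequently $E$ admits a $\Q_\ell$-rational subgroup of order~$2$, and hence a $\Q_\ell$-rational $2$-isogeny $E \to W$ with $e(W)=4$; the same applies to~$E'$. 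Possibly after iterating, one reaches curves $W, W'$ satisfying $\Q_\ell(W[2]) = \Q_\ell(W'[2]) = \Q_\ell(\sqrt{\ell}) \subset F$, which is precisely full $2$-torsion over~$F$. Since $\gcd(2,p)=1$, each $2$-isogeny induces a $G_{\Q_\ell}$-module isomorphism on the $p$-torsion, so applying Lemma~\ref{lem:e4} to $W, W'$ yields
\[
E[p] \simeq W[p] \simeq W'[p] \simeq E'[p]
\]
as $G_{\Q_\ell}$-modules.

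The symplectic criterion then follows from the original \cite[Theorem~5]{symplectic}, which, once the $G_{\Q_\ell}$-isomorphism $E[p] \simeq E'[p]$ is at hand, expresses the symplectic type in terms of the invariants~$r$ and~$t$ attached to the original models of $E$ and $E'$ (whose values are unaffected by our passage to~$W,W'$, since $r$ and $t$ are read off the minimal Weierstrass data). The main obstacle I anticipate is confirming that the chain of $\Q_\ell$-rational $2$-isogenies can always be driven to terminate at $\Q_\ell(W[2]) = \Q_\ell(\sqrt{\ell})$ rather than at the other ramified quadratic extension $\Q_\ell(\sqrt{-\ell})$; this amounts to tracking the quadratic Kummer class of the $2$-torsion field along the isogeny chain, which can be carried out via V\'elu's formulas combined with the fact that $e=4$ is preserved at each step.
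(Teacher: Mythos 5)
Your proposal follows the same route as the paper: reduce both curves to the case of full $2$-torsion over $F=\Q_\ell(\ell^{1/4})$ by passing to $2$-isogenous curves, apply Lemma~\ref{lem:e4} to get the $G_{\Q_\ell}$-module isomorphism, and then invoke the original \cite[Theorem~5]{symplectic} for the symplectic criterion (correctly noting that the invariants $r,t$ are computed from $E,E'$ themselves). The group-theoretic derivation of a rational $2$-torsion point is essentially sound in its conclusion (it is \cite[Lemma~19(i)]{symplectic}, which the paper uses elsewhere), although the non-triviality of $\rhobar_{E,2}(I_\ell)$ does not follow from N\'eron--Ogg--Shafarevich alone --- unramified $2$-torsion does not force good reduction --- but rather from the fact that the order-$4$ generator of $\gamma_E(\Phi)\subset\Aut(\Ebar)$ acts on $\Ebar[2]$ with order exactly $2$.

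The genuine gap is the one you flag yourself: you assert that ``possibly after iterating'' the chain of rational $2$-isogenies one reaches a curve $W$ with $\Q_\ell(W[2])=\Q_\ell(\sqrt{\ell})$, but you do not prove it, and iteration alone cannot be expected to succeed --- if both $E$ and its $2$-isogenous curve had $2$-torsion field $\Q_\ell(\sqrt{-\ell})$, further $2$-isogenies would simply cycle and never produce full $2$-torsion over $F$. What is actually needed is the dichotomy of \cite[Lemma~21]{symplectic}: either $E/F$ already has full $2$-torsion, or the (single) $2$-isogenous curve $W/\Q_\ell$ does. This is exactly the ingredient the paper cites to close the reduction step, and it is not something that can be waved through; it requires the explicit analysis of the $2$-division polynomial (or equivalent) that you defer to ``V\'elu's formulas.'' Without that lemma (or a proof of its content), the module isomorphism $E[p]\simeq E'[p]$ is not established, and the rest of the argument does not get off the ground.
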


\begin{proof}
Let $F=\Q_\ell(\ell^{1/4})$.
We know from \cite[Lemma 21]{symplectic} that,
for $E/\Q_\ell$ any elliptic curve as in the statement,
either $E/F$ has full $2$-torsion or $E$ is $2$-isogenous
to a curve~$W/\Q_\ell$ with full $2$-torsion over~$F$.
Since $p \geq 5$, this $2$-isogeny induces an isomorphism $E[p] \simeq W[p]$ of $G_{\Q_\ell}$-modules.

Therefore, after replacing $E$ and/or $E'$ by $W$ and/or $W'$ if needed, we can assume that both $E$ and $E'$ have full $2$-torsion over~$F$.
Thus $E[p] \simeq E'[p]$ by Lemma~\ref{lem:e4}.

The claim about the symplectic type now
follows from \cite[Theorem 5]{symplectic}.
\end{proof}

The following is the revised version of \cite[Theorem 6]{symplectic}.

\begin{theorem} Let $p \geq 5$ be a prime.
Let $E/\Q_2$ and $E'/\Q_2$ be elliptic curves with $e=e'=4$.
Assume $\tilde{c}_4 \equiv 5\tilde{\Delta} \pmod{8}$ and $\tilde{c}_4' \equiv 5\tilde{\Delta}' \pmod{8}$. Assume also that $E$ and~$E'$ have good reduction over the same field $F$ given in Theorem~\ref{thm:F}.

Let $r=0$ if $\tilde{c}_6 \equiv \tilde{c}_6' \pmod{4}$ and $r=1$ otherwise.

Then $E[p]$ and $E'[p]$ are isomorphic $G_{\Q_2}$-modules. Moreover,
\[
 E[p] \text{ and } E'[p] \quad \text{are symplectically isomorphic} \quad \Leftrightarrow \quad \left (\frac{2}{p} \right)^r = 1.
\]
\label{T:mainWild4}
\end{theorem}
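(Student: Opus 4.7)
The plan is to follow exactly the template of the three preceding revised criteria (Theorems~\ref{T:mainTame3}, \ref{T:mainWild3}, and~\ref{T:mainTame4}): first upgrade the existing criterion by establishing that $E[p]$ and $E'[p]$ are \emph{a priori} isomorphic as $G_{\Q_2}$-modules, and then read off the symplectic/anti-symplectic dichotomy directly from the original \cite[Theorem~6]{symplectic}, which conditionally on this isomorphism gives precisely the stated criterion $\left(\frac{2}{p}\right)^r=1$.

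For the first step, the plan is to verify that the hypotheses of Lemma~\ref{lem:e3e4} hold in the case $(\ell,e)=(2,4)$. The assumptions $\tilde{c}_4\equiv 5\tilde{\Delta}\pmod{8}$ and $\tilde{c}_4'\equiv 5\tilde{\Delta}'\pmod{8}$ place us in case (3) of Theorem~\ref{thm:F}, which immediately yields that both $K=\Q_2(E[p])$ and $K'=\Q_2(E'[p])$ are non-abelian extensions of $\Q_2$. The remaining hypothesis of Lemma~\ref{lem:e3e4}, namely that $E$ and~$E'$ acquire good reduction over the same field $F$ of Theorem~\ref{thm:F}, is explicitly assumed in the statement. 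So Lemma~\ref{lem:e3e4} applies directly and delivers the $G_{\Q_2}$-module isomorphism $E[p]\simeq E'[p]$.

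Once this isomorphism is in hand, the second step is a citation: the symplectic/anti-symplectic characterization is exactly the content of~\cite[Theorem~6]{symplectic} under the same running hypotheses on $\tilde{c}_4,\tilde{c}_4',\tilde{\Delta},\tilde{\Delta}',$ and the definition of~$r$ via the comparison $\tilde{c}_6\equiv\tilde{c}_6'\pmod{4}$.

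The only genuine subtlety — and the reason the hypothesis that $E$ and~$E'$ acquire good reduction over the \emph{same} $F$ cannot be dropped, in contrast with the odd-residue-characteristic case of Theorem~\ref{T:mainWild3} — is the phenomenon highlighted in Theorem~\ref{thm:F}(3): at $(\ell,e)=(2,4)$ there are two non-isomorphic possibilities for the field of good reduction, given by $f_1$ and $f_2$, and Proposition~\ref{prop:samefields} only pins down $K=K'$ once this choice has been fixed. This is exactly the obstruction that forced the extra assumption in Lemma~\ref{lem:e3e4}, and it is the reason the statement here is conditional on a common $F$ rather than fully unconditional.
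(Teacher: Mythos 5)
Your proposal is correct and follows essentially the same route as the paper: Theorem~\ref{thm:F}(3) gives the non-abelian $p$-torsion fields from the congruences on $\tilde{c}_4$ and $\tilde{\Delta}$, the common-$F$ hypothesis feeds into Lemma~\ref{lem:e3e4} to produce the $G_{\Q_2}$-isomorphism $E[p]\simeq E'[p]$, and the symplectic dichotomy is then read off from \cite[Theorem~6]{symplectic}. Your closing remark on why the common-$F$ assumption cannot be dropped at $(\ell,e)=(2,4)$ is also accurate.
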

\begin{proof}
From Theorem~\ref{thm:F} we know that the $K = \Q_2(E[p])$
and $K = \Q_2(E'[p])$ are non-abelian extensions of $\Q_2$.
Moreover, by the assumption, both curves $E$ and $E'$ obtain good reduction over the same field~$F$ given in that theorem. Thus $E[p] \simeq E'[p]$ by Lemma~\ref{lem:e3e4}.

The claim about the symplectic type now
follows from \cite[Theorem 6]{symplectic}.
\end{proof}

We derive the following consequence from the above theorems.

\begin{corollary}\label{cor:uniqueEp}
Let $E/\Q_\ell$ and $E'/\Q_\ell$ be elliptic curves and $p \geq 5$. Suppose that both curves have potentially good reduction with $e=e' \in \{ 3,4 \}$ and non-abelian $p$-torsion field extension.
If $(\ell,e) = (2,4)$, assume also that both curves acquire good reduction over the same field $F$ given in Theorem~\ref{thm:F} (3).
Then $E[p] \simeq E'[p]$ as $G_{\Q_\ell}$-modules.
\end{corollary}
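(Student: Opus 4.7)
The plan is to derive the corollary by straightforward case-analysis on the pair $(\ell,e)$, using Theorem~\ref{thm:F} to match the non-abelian hypothesis on $K/\Q_\ell$ (and on $K'/\Q_\ell$) with the precise numerical hypotheses of one of the four revised symplectic theorems of Section~\ref{sec:revisiting}. In each case, the conclusion "$E[p]$ and $E'[p]$ are isomorphic as $G_{\Q_\ell}$-modules" is literally the first assertion of the corresponding revised theorem, so no new argument is required; one only needs to verify that the hypotheses transfer cleanly.

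First I would handle $e=e'=3$. If $\ell\neq 3$, Theorem~\ref{thm:F}(1) identifies the non-abelian condition with $\ell\equiv 2\pmod 3$, so Theorem~\ref{T:mainTame3} applies directly to the pair $(E,E')$ and yields $E[p]\simeq E'[p]$. If $\ell=3$, Theorem~\ref{thm:F}(2) says that the non-abelian condition is equivalent to $\tilde{\Delta}\equiv 2\pmod 3$, and applied to both curves it gives $\tilde{\Delta}\equiv\tilde{\Delta}'\equiv 2\pmod 3$; these are precisely the hypotheses of Theorem~\ref{T:mainWild3}. I would then treat $e=e'=4$ by the same dichotomy: for $\ell\neq 2$, Theorem~\ref{thm:F}(1) forces $\ell\equiv 3\pmod 4$ and Theorem~\ref{T:mainTame4} applies; for $\ell=2$, Theorem~\ref{thm:F}(3) forces $\tilde{c}_4\equiv 5\tilde{\Delta}\pmod 8$ and $\tilde{c}_4'\equiv 5\tilde{\Delta}'\pmod 8$, while the corollary's supplementary assumption that $E$ and $E'$ acquire good reduction over the same $F$ supplies the remaining hypothesis of Theorem~\ref{T:mainWild4}.

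There is no genuine obstacle: the substantive content is absorbed into the four revised theorems, and the only bookkeeping to get right is the correspondence between "$K/\Q_\ell$ non-abelian" and the explicit criteria in Theorem~\ref{thm:F}. The single real subtlety worth flagging is why the extra hypothesis on the common field $F$ is imposed precisely when $(\ell,e)=(2,4)$: this is exactly the situation where Theorem~\ref{thm:F}(3) allows two non-conjugate choices $F_1$ or $F_2$ for the field of good reduction, leading (via quadratic twist by $-1$) to two genuinely distinct $G_{\Q_2}$-modules on the $p$-torsion, so the hypothesis cannot be removed and is in fact tight.
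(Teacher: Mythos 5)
Your proposal is correct and is exactly the argument the paper intends: the corollary is stated as a direct consequence of Theorems~\ref{T:mainTame3}, \ref{T:mainWild3}, \ref{T:mainTame4} and~\ref{T:mainWild4}, each of which already asserts the $G_{\Q_\ell}$-module isomorphism, and your use of Theorem~\ref{thm:F} to translate ``non-abelian $p$-torsion field'' into the numerical hypotheses of the appropriate revised theorem is precisely the required bookkeeping. Your closing remark on why the common-$F$ hypothesis is needed only for $(\ell,e)=(2,4)$ also matches the paper's discussion of the two possible inertial fields $F_1$, $F_2$ in that case.
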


\subsection{Points on \texorpdfstring{$\Xp$}{}}

In this section, we will generate points on $\Xp(\Q_\ell)$, by giving explicit examples of elliptic curves $E'/\Q_\ell$ with the same semistability defect $e=e' \in \{3,4\}$ and non-abelian $p$-torsion field, such that they satisfy the anti-symplectic criteria given in Section~\ref{sec:revisiting}. Moreover, the same criteria also identify when it is not possible to find such $E'$, hence offering converse statements.

The case $e=6$ reduces to $e=3$, by passing to a quadratic twist of $E$, and it is included in this section for completeness. 

We start with the case of semistability defect $e=3$.

\begin{theorem}\label{thm:e3l}
	Let $\ell$ and~$p \geq 5$ be different primes such that $\ell \equiv 2 \pmod{3}$.
	Let $E/\Q_\ell$ be an elliptic curve with $e=3$.
	Then $\Xp(\Q_\ell)\neq \emptyset$
	if and only if $(3/p)= -1$ or $(\ell / p) = -1$.
\end{theorem}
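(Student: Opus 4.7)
The plan is to reduce the question to the combinatorics of the anti-symplectic criterion of Theorem~\ref{T:mainTame3}. By the final paragraph of Section~\ref{sec:Xp}, $X_E^-(p)(\Q_\ell) \neq \emptyset$ is equivalent to the existence of a non-cuspidal $\Q_\ell$-point, i.e.\ to the existence of an elliptic curve $E'/\Q_\ell$ and an anti-symplectic $G_{\Q_\ell}$-isomorphism $E[p] \simeq E'[p]$. Since $\ell \equiv 2 \pmod{3}$, Theorem~\ref{thm:F}(1) guarantees that $K = \Q_\ell(E[p])$ is non-abelian; any such $E'$ must have inertia image conjugate to that of $E$, forcing $e' = 3$ and $K'$ non-abelian. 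Conversely, for any such $E'$ Corollary~\ref{cor:uniqueEp} yields $E[p] \simeq E'[p]$ automatically, and then Theorem~\ref{T:mainTame3} identifies the isomorphism as symplectic or anti-symplectic according to whether $(\ell/p)^r (3/p)^t$ equals $1$ or $-1$.

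For the forward direction, if $(3/p) = -1$ then the opening paragraph of the proof of Theorem~\ref{T:mainTame3} (invoking \cite[Proposition~10]{symplectic}) shows that $E$ always admits a $\Q_\ell$-rational $3$-isogeny, and Corollary~\ref{cor:isogeny} immediately produces a point in $X_E^-(p)(\Q_\ell)$. If instead $(\ell/p) = -1$, the goal is to exhibit $E'/\Q_\ell$ with $e' = 3$, non-abelian $K'$, Kodaira type opposite to that of $E$ (the only possibilities being IV with $v_\ell(\Delta_m) = 4$ and IV$^*$ with $v_\ell(\Delta_m) = 8$, yielding $r = 1$), and identical $3$-torsion behaviour over $\Q_\ell$ (so that $t = 0$); Theorem~\ref{T:mainTame3} then makes the iso anti-symplectic.

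The converse is immediate from the setup. If $(3/p) = 1$ and $(\ell/p) = 1$, then for any candidate $E'$ the criterion of Theorem~\ref{T:mainTame3} produces $(\ell/p)^r (3/p)^t = 1$, ruling out anti-symplectic isomorphisms. To exclude contributions coming from an anti-symplectic self-map (i.e.\ $E' = E$ with a non-scalar automorphism), observe that by Proposition~\ref{prop:G} the group $G = \rhobar_{E,p}(G_{\Q_\ell})$ is generated by an inertia element $\sigma$ of order $3$ and a Frobenius lift $\tau$ satisfying $\tau \sigma \tau^{-1} = \sigma^{-1}$, so $\tau$ swaps the two eigenlines of $\sigma$; hence $G$ acts irreducibly on $\F_p^2$, and Schur's lemma forces $\Aut_{G_{\Q_\ell}}(E[p])$ to consist of scalars, whose determinants are all squares.

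The main obstacle is thus the explicit construction in the $(\ell/p) = -1$ case. Neither a $3$-isogeny (which, for $\ell \neq 3$, preserves $v_\ell(\Delta_m)$ and hence the Kodaira type) nor an unramified quadratic twist alters $v_\ell(\Delta_m) \bmod 3$, so the natural elementary operations on $E$ fail to swap IV and IV$^*$. I therefore expect to rely on Kraus's local models from~\cite{Kraus}, combined with the flexibility of replacing $E$ by its $3$-isogenous curve (as in the proof of Theorem~\ref{T:mainTame3}) to match $3$-torsion status, in order to realise the required change of Kodaira type while keeping $e' = 3$ and non-abelian $K'$.
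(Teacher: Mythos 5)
Your overall strategy coincides with the paper's: the ``only if'' direction follows from Theorem~\ref{T:mainTame3} (noting that any $E'$ giving a point must have $e'=3$, since a $G_{\Q_\ell}$-isomorphism $E[p]\simeq E'[p]$ forces equal inertial fields), the case $(3/p)=-1$ is handled by the rational $3$-isogeny from \cite[Proposition~10]{symplectic} together with Corollary~\ref{cor:isogeny}, and the case $(\ell/p)=-1$ reduces to exhibiting a curve $E'/\Q_\ell$ with $e'=3$ and $v_\ell(\Delta'_m)\not\equiv v_\ell(\Delta_m)\pmod 3$. Your Schur's-lemma digression about self-maps is redundant (it is already the case $E'=E$, $r=t=0$ of Theorem~\ref{T:mainTame3}), and the requirement ``identical $3$-torsion behaviour so that $t=0$'' is unnecessary: once $(3/p)=-1$ has been dispatched you may assume $(3/p)=1$, so the factor $(3/p)^t$ is $1$ regardless of $t$.

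The genuine gap is exactly where you flag it: you never produce the curve $E'$ realising the opposite discriminant valuation, and you correctly observe that the natural operations ($3$-isogenies, unramified twists) cannot change $v_\ell(\Delta_m)\bmod 3$. Saying you ``expect to rely on Kraus's local models'' is not a proof that such a curve exists for every $\ell\equiv 2\pmod 3$; the whole content of this case is that existence statement. The paper settles it by writing down explicit Weierstrass models: if $v_\ell(\Delta_m)=4$ one takes $E':y^2+\ell xy+\ell^2 y=x^3$, which has $v_\ell(\Delta'_m)=8$, and if $v_\ell(\Delta_m)=8$ one takes $E':y^2+\ell xy+\ell y=x^3$, which has $v_\ell(\Delta'_m)=4$; in both cases the valuations $(v_\ell(c_4'),v_\ell(c_6'),v_\ell(\Delta'_m))$ match an entry of \cite[Table~5]{symplectic} certifying $e'=3$ (with extra congruence conditions checked when $\ell=2$). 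To complete your argument you would need to supply such models, or some other uniform existence argument, for all $\ell\equiv 2\pmod 3$.
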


\begin{proof}
Let $E/\Q_\ell$ be as in the statement.
If $\Xp(\Q_\ell) \neq \emptyset$ then there is $E'/\Q_\ell$ such that
$E[p]$ and $E'[p]$ are anti-symplectically isomorphic $G_{\Q_\ell}$-modules. It follows from Theorem~\ref{T:mainTame3} that $(3/p)= -1$ or $(\ell/p)= -1$.

To prove the converse, suppose first $(3/p)=-1$.
It follows from \cite[Proposition 10]{symplectic} that $E/\Q_\ell$ admits a $\Q_\ell$-isogeny of degree~3, therefore $\Xp(\Q_\ell) \neq \emptyset$ by Corollary~\ref{cor:isogeny}.

Assume now $(3/p) = 1$ and $(\ell / p) = -1$. Let $E'/\Q_\ell$ be an elliptic curve with $e'=e=3$.
From Theorem~\ref{T:mainTame3} we know that
$E[p]$ and $E'[p]$ are isomorphic $G_{\Q_\ell}$-modules
and that such an isomorphism is anti-symplectic if and only if
$v_{\ell}(\Delta_m)\not\equiv v_{\ell}(\Delta'_m) \pmod{3}$.

To complete the proof, we show that such a curve $E'$ exists, and hence it will give rise to a point on $\Xp(\Q_\ell)$.

It is well-known (e.g.~\cite[Table 5]{symplectic}) that if an elliptic curve $E/\Q_\ell$ has semistability defect $e=3$ then $v_\ell(\Delta_m)\in \{4,8\}$, so we have two cases.

If $E$ satisfies $v_\ell(\Delta_m)=4$, we take
\[
E'/\Q_{\ell} \; : \; y^2+\ell xy+ \ell^2 y = x^3
\]
satisfying
\[
(v_{\ell}(c_4'), v_{\ell}(c_6'), v_{\ell}(\Delta_m'))=
\begin{cases}
    (3,4,8), & \text{ if } \ell \geq 5,\\

(4,6,8),\: \tilde{c}_6' \equiv 3 \pmod 4, \: \tilde{\Delta}'\equiv 3 \pmod 4 & \text{ if } \ell=2;
\end{cases}
\]
and so $E'$
has $e'=3$
by \cite[Table 5]{symplectic}.

If $E$ satisfies $v_\ell(\Delta_m)=8$, we take
\[
E'/\Q_{\ell} \; : \; y^2+\ell xy+ \ell y = x^3
\]
satisfying
\[
(v_{\ell}(c_4'), v_{\ell}(c_6'), v_{\ell}(\Delta_m'))=
\begin{cases}
    (2, 2, 4), & \text{ if } \ell \geq 5,\\
     (4,5,4),\: \tilde{c}_4' \equiv 3 \pmod 4, \: \tilde{c}_6' \equiv 1 \pmod 4,& \text{ if } \ell=2;
\end{cases}
\]
this curve also has $e'=3$
by \cite[Table 5]{symplectic}.
\end{proof}

\begin{theorem}\label{thm:e3l3}
	Let $p \geq 5$ be a prime. Let $E/\Q_3$ be an elliptic curve with $e=3$ and satisfying $\tilde{\Delta}_m \equiv 2 \pmod 3$.
  Then $\Xp(\Q_3)\neq \emptyset$ if and only if $(3/p) = -1$.
\end{theorem}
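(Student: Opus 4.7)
The plan is to mimic the proof of Theorem~\ref{thm:e3l}, replacing the tame symplectic criterion by its wild analogue at $\ell=3$, namely Theorem~\ref{T:mainWild3}, together with Lemma~\ref{lem:e3e4} and Corollary~\ref{cor:uniqueEp}. Recall that by Theorem~\ref{thm:F}~(2), the hypothesis $\tilde{\Delta}_m \equiv 2 \pmod 3$ is exactly what forces $K = \Q_3(E[p])$ to be non-abelian, and in this case the field $F$ of good reduction is uniquely determined (defined by $x^3+3x^2+3$).

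For the direction $(\Rightarrow)$, suppose $\Xp(\Q_3) \neq \emptyset$ and pick $E'/\Q_3$ realizing an anti-symplectic isomorphism $E[p] \simeq E'[p]$ of $G_{\Q_3}$-modules. Since $\Q_3(E[p]) = \Q_3(E'[p])$, the extension $K'/\Q_3$ is non-abelian with the same ramification profile prescribed by Proposition~\ref{prop:G}; reading off the semistability defect and the wildness from the action of $I_3$ on $E[p]$ forces $e'=3$ and $\tilde{\Delta}' \equiv 2 \pmod 3$. Theorem~\ref{T:mainWild3} then yields that the isomorphism is anti-symplectic iff $(3/p)^r = -1$ for some $r \in \{0,1\}$, which forces $(3/p) = -1$.

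For the converse, assume $(3/p) = -1$. The goal is to exhibit $E'/\Q_3$ with $e'=3$, $\tilde{\Delta}' \equiv 2 \pmod 3$ and $\tilde{c}_6' \not\equiv \tilde{c}_6 \pmod 3$, so that $r=1$ in Theorem~\ref{T:mainWild3}. Since the field $F$ in Theorem~\ref{thm:F}~(2) is unique, both $E$ and $E'$ acquire good reduction over the same $F$, so Corollary~\ref{cor:uniqueEp} (via Lemma~\ref{lem:e3e4}) gives $E[p] \simeq E'[p]$ as $G_{\Q_3}$-modules; then Theorem~\ref{T:mainWild3} asserts this isomorphism is anti-symplectic, producing the desired $\Q_3$-point on~$\Xp$. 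The construction of~$E'$ is routine and parallels the proof of Theorem~\ref{thm:e3l}: one consults the tables of \cite{symplectic} for minimal models at $\ell=3$ with $e=3$ satisfying $\tilde{\Delta}' \equiv 2 \pmod 3$, and writes down two explicit Weierstrass equations hitting each of the two possible residues of $\tilde{c}_6 \pmod 3$, verifying the values of $(v_3(c_4'), v_3(c_6'), v_3(\Delta_m'))$ and the target residue by direct computation.

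The main technical obstacle is the $(\Rightarrow)$ direction: rigorously ruling out that an $E'/\Q_3$ contributing to $\Xp(\Q_3)$ could have $e' \neq 3$ or $\tilde{\Delta}' \not\equiv 2 \pmod 3$, so that the hypotheses of Theorem~\ref{T:mainWild3} are met. This amounts to showing that the restriction of $\rhobar_{E,p}$ to inertia $I_3$ detects the semistability defect and the wild/tame nature of $K'/\Q_3$; combined with the structure of $\Gal(K/\Q_3)$ given by Proposition~\ref{prop:G} and the classification of Theorem~\ref{thm:F}, this leaves no other possibility.
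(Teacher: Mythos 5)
Your proposal is correct and follows essentially the same route as the paper: the forward direction applies Theorem~\ref{T:mainWild3} (after noting, as you rightly flag, that $E[p]\simeq E'[p]$ forces $L=L'$, hence $e'=3$ and, via Theorem~\ref{thm:F}~(2), $\tilde{\Delta}'\equiv 2\pmod 3$), and the converse exhibits an $E'$ with $e'=3$, $\tilde{\Delta}'\equiv 2\pmod 3$ and the opposite residue of $\tilde{c}_6\pmod 3$. The only step you leave implicit — producing the two explicit curves realizing $\tilde{c}_6'\equiv 1,2\pmod 3$ — is exactly what the paper supplies via the last two rows of Table~\ref{tab:abelian_examples}.
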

\begin{proof}
Let $E/\Q_3$ be as in the statement.
If $\Xp(\Q_3)\neq \emptyset$ then there is $E'/\Q_3$ such that
$E[p]$ and $E'[p]$ are anti-symplectically isomorphic $G_{\Q_3}$-modules. It follows from Theorem~\ref{T:mainWild3} that $(3/p)= -1$.

To prove the converse, suppose $(3/p)=-1$.
Let $E'/\Q_3$ be an elliptic curve with $e'=e=3$ and $\tilde{\Delta}'_m \equiv 2 \pmod 3$. From Theorem~\ref{T:mainWild3} we know that
$E[p] \simeq E'[p]$ as $G_{\Q_3}$-modules
and that such an isomorphism is anti-symplectic if and only if
$\tilde{c}_6 \not\equiv \tilde{c}'_6 \pmod 3$.

To complete the proof, we show that such a curve $E'$ exists.

As illustrated in the proof of Theorem~\ref{thm:e3l}, it suffices to give an example of $E'$ with $e'=3$ for each value of $\tilde{c}_6 \pmod 3 \in\{1, 2\}$.
These are given in the last two rows of Table~\ref{tab:abelian_examples}.
\end{proof}

We have the following consequence for the case of semistability defect $e=6$.

\begin{corollary}\label{cor:e6}  Let $\ell$ and~$p \geq 5$ be different primes.
 Let $E/\Q_\ell$ be an elliptic curve with $e=6$.
 Assume that either $\ell \equiv 2 \pmod 3$ or $\ell = 3$ and $\tilde{\Delta} \equiv 2 \pmod 3$.
Then $\Xp(\Q_\ell)\neq \emptyset$ if and only if $(3/p)= -1$ or $(\ell / p) = -1$.
\end{corollary}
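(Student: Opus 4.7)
The plan is to reduce the case $e=6$ to the case $e=3$ via a suitable quadratic twist and then invoke Theorem~\ref{thm:e3l} or Theorem~\ref{thm:e3l3}. The underlying reason this should work is that quadratic twisting changes the representation $\rhobar_{E,p}$ only by multiplication by the quadratic character~$\chi_u$, so abelianness of the image is preserved; on the inertia side, twisting by a ramified quadratic character multiplies a generator of~$\Phi$ by~$-1$, sending a group of order~$6$ to a group of order~$3$.

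Concretely, I would first cite from~\cite{symplectic} (this is the analogue of the $e=2$ reduction used in Corollary~\ref{cor:e2}) that for any $E/\Q_\ell$ with $e(E)=6$ there exists $u\in\Q_\ell^\times/(\Q_\ell^\times)^2$ such that the quadratic twist $E^u$ has $e(E^u)=3$. By Lemma~\ref{lem:qtwists}, $\Xp(\Q_\ell)\neq\emptyset$ if and only if $X^-_{E^u}(p)(\Q_\ell)\neq\emptyset$, so it remains to verify that $E^u$ satisfies the hypotheses of the appropriate $e=3$ theorem.

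If $\ell \equiv 2\pmod{3}$ then the hypothesis of Theorem~\ref{thm:e3l} concerns only~$\ell$ and is automatic for~$E^u$; applying Theorem~\ref{thm:e3l} to~$E^u$ yields
\[
X^-_{E^u}(p)(\Q_\ell)\neq\emptyset \iff (3/p)=-1 \text{ or } (\ell/p)=-1,
\]
which gives the corollary. If instead $\ell=3$ and $\tilde{\Delta}(E)\equiv 2\pmod 3$, I need to show that $\tilde{\Delta}(E^u)\equiv 2\pmod 3$ in order to apply Theorem~\ref{thm:e3l3}. By Theorem~\ref{thm:F}(2), for the curve $E^u$ with $e(E^u)=3$, the condition $\tilde{\Delta}(E^u)\equiv 2\pmod 3$ is equivalent to $\Q_3(E^u[p])/\Q_3$ being non-abelian, and since $\rhobar_{E^u,p}=\rhobar_{E,p}\otimes\chi_u$ this in turn is equivalent to $\Q_3(E[p])/\Q_3$ being non-abelian. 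So the task reduces to verifying that the hypothesis $\tilde{\Delta}(E)\equiv 2\pmod 3$ for $E$ with $e=6$ forces $\Q_3(E[p])/\Q_3$ to be non-abelian, which is the analogue of Theorem~\ref{thm:F}(2) at the $e=6$ level and can be extracted from the explicit relation between $(c_4,c_6,\Delta)$ of~$E$ and~$E^u$ under twisting (which changes these by fixed powers of~$u$). Theorem~\ref{thm:e3l3} then yields $X^-_{E^u}(p)(\Q_\ell)\neq\emptyset$ iff $(3/p)=-1$, which, since $\ell=3$, is precisely the statement $(3/p)=-1$ or $(\ell/p)=-1$.

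The main obstacle is the bookkeeping in the wild case $\ell=3$: one must confirm that the hypothesis $\tilde{\Delta}\equiv 2\pmod 3$ is an intrinsic "non-abelianness" criterion that is stable under passing between $E$ (with $e=6$) and its twist $E^u$ (with $e=3$). Once this invariance is established via the argument above, the rest of the proof is an immediate application of Lemma~\ref{lem:qtwists} and the already-proved $e=3$ theorems.
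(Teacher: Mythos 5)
Your proposal is correct and follows essentially the same route as the paper: reduce to $e=3$ via the quadratic twist $E^u$ from \cite[Lemmas 1 and 2]{symplectic}, transfer the conclusion with Lemma~\ref{lem:qtwists}, and apply Theorem~\ref{thm:e3l} (resp.\ Theorem~\ref{thm:e3l3}) after checking the hypotheses for $E^u$. The only difference is that for $\ell=3$ the paper verifies $\tilde{\Delta}(E^u)\equiv\tilde{\Delta}\pmod 3$ directly from $\Delta_m(E^u)=s^2\Delta_m$, whereas your detour through the non-abelianness criterion of Theorem~\ref{thm:F}(2) is unnecessary since it ultimately rests on the same discriminant computation you point to at the end.
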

\begin{proof} 
Let $E/\Q_\ell$ be as in the statement.
From \cite[Lemmas 1 and 2]{symplectic} there is $u$ such that the quadratic twist $E^u/\Q_\ell$ of~$E/\Q_\ell$ satisfies $e(E^u) = 3$.

If $\ell \equiv 2 \pmod 3$ then the claim follows from Theorem~\ref{thm:e3l} applied to $E^u/\Q_\ell$ and Lemma~\ref{lem:qtwists}.

If $\ell = 3$ and $\tilde{\Delta} \equiv 2 \pmod 3$. The discriminant $\Delta_m(E^u)$ of a minimal model for $E^u$
satisfies $\Delta_m(E^u) = s^2 \Delta_m$, hence $\tilde{\Delta}(E^u) \equiv \tilde{\Delta} \equiv 2 \pmod{3}$. The conclusion now follows from Theorem~\ref{thm:e3l3} applied to $E^u/\Q_\ell$ and Lemma~\ref{lem:qtwists}.
\end{proof}

Finally, we cover the case of semistability defect $e=4$.

\begin{theorem}\label{thm:e4l}
	Let $\ell \equiv 3 \pmod{4}$ and~$p \geq 5$ be different primes.
	Let $E/\Q_\ell$ be an elliptic curve with $e=4$.
	Then $\Xp(\Q_\ell)\neq \emptyset$ if and only if $(2/p)= -1$ or $(\ell / p) = -1$.
\end{theorem}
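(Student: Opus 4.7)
The strategy mirrors that of Theorem~\ref{thm:e3l}. For the forward direction, assume $\Xp(\Q_\ell)\neq\emptyset$, so that there exists $E'/\Q_\ell$ with $E[p]\simeq E'[p]$ anti-symplectically as $G_{\Q_\ell}$-modules. Since $p\geq 5$ and $\ell\neq p$, the map $\psi:\Phi\to\GL(E[p])$ is injective, so the image of inertia determines the semistability defect; this forces $e'=4$. The isomorphism also implies $\Q_\ell(E'[p])=\Q_\ell(E[p])$, which is non-abelian by Theorem~\ref{thm:F}(1). Applying Theorem~\ref{T:mainTame4} to this anti-symplectic isomorphism yields $(\ell/p)^r(2/p)^t=-1$ for the corresponding $r,t\in\{0,1\}$, whence $(\ell/p)=-1$ or $(2/p)=-1$.

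For the converse, first suppose $(2/p)=-1$. I would show that $E/\Q_\ell$ admits a $\Q_\ell$-rational $2$-isogeny; Corollary~\ref{cor:isogeny} then gives the conclusion. By Lemma~\ref{lem:al0}, $a_F(E)=0$, so $\rhobar_{E,2}(\Frob_F)$ has characteristic polynomial $(t+1)^2\pmod 2$ and thus is a $2$-element of $\GL_2(\F_2)\cong S_3$. The inertia image factors through $\Phi\cong C_4$ into $S_3$, so has order at most $2$; being normal in the full image, and since each order-$2$ subgroup of $S_3$ is its own normalizer, the entire image of $\rhobar_{E,2}$ is a $2$-group. Acting on the three nonzero points of $E[2]$, such a group must fix at least one, yielding the desired $2$-isogeny.

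Assume now $(2/p)=1$ and $(\ell/p)=-1$. By Theorem~\ref{T:mainTame4}, it suffices to exhibit $E'/\Q_\ell$ with $e'=4$, $v_\ell(\Delta_m')\not\equiv v_\ell(\Delta_m)\pmod 4$, and $\tilde{\Delta}'$ a square mod $\ell$ if and only if $\tilde{\Delta}$ is. Since $e=4$ forces $v_\ell(\Delta_m)\in\{3,9\}$ (Kodaira types III and III*), I would proceed case by case: if $v_\ell(\Delta_m)=3$ take an explicit Weierstrass model with $v_\ell(\Delta_m')=9$, and vice versa, modelled on the invariant tables in \cite{symplectic} (as in the proof of Theorem~\ref{thm:e3l}) and equipped with a free unit parameter used to prescribe the square-class of $\tilde{\Delta}'$.

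The main obstacle is this last explicit construction: one must verify that the chosen models are minimal with $e'=4$, and that the unit parameter can be varied over $\Z_\ell^\times$ to realize both quadratic-residue classes of $\tilde{\Delta}'$ modulo $\ell$ without altering the valuations $(v_\ell(c_4'),v_\ell(c_6'),v_\ell(\Delta_m'))$. This parallels the explicit families exhibited at the end of the proof of Theorem~\ref{thm:e3l}, and I expect it to be carried out in the same style.
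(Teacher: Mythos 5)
Your overall strategy coincides with the paper's: forward direction via Theorem~\ref{T:mainTame4}, the case $(2/p)=-1$ via a rational $2$-isogeny and Corollary~\ref{cor:isogeny}, and the case $(2/p)=1$, $(\ell/p)=-1$ by exhibiting a second curve with $e'=4$ and discriminant valuation in the other residue class mod~$4$. Your derivation of the $2$-torsion point from $a_F(E)=0$ and the structure of subgroups of $\GL_2(\F_2)$ is a valid (if more laborious) substitute for the paper's citation of \cite[Lemma~19~(i)]{symplectic}. The one substantive correction: the condition you impose on the square class of $\tilde{\Delta}'$ --- which you identify as the ``main obstacle'' requiring a free unit parameter --- is not needed. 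In Theorem~\ref{T:mainTame4} the symplectic criterion is $(\ell/p)^r(2/p)^t=1$, and under the standing assumption $(2/p)=1$ the exponent $t$ is irrelevant; anti-symplecticity is equivalent to $r=1$ alone, i.e.\ to $v_\ell(\Delta_m)\not\equiv v_\ell(\Delta_m')\pmod 4$. Consequently the construction reduces to writing down, for each of $v_\ell(\Delta_m)\in\{3,9\}$, a single fixed Weierstrass model with the complementary valuation and $e'=4$ (the paper uses $y^2=x^3+\ell^2x^2-\ell^3x$ and $y^2=x^3+\ell x^2-\ell x$, verified against \cite[Table~5]{symplectic}), with no parameter to vary. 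As written, your proof is incomplete only in that this explicit exhibition is deferred, and the deferral rests on a requirement that the theorem does not actually impose.
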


\begin{proof}
Let $E/\Q_\ell$ be as in the statement. 
If $\Xp(\Q_\ell)\neq \emptyset$ then there is $E'/\Q_\ell$ such that
$E[p]$ and $E'[p]$ are anti-symplectically isomorphic $G_{\Q_\ell}$-modules. It follows from Theorem~\ref{T:mainTame4} that $(2/p)= -1$ or $(\ell / p) = -1$.

To prove the converse, suppose first $(2/p)= -1$.
From \cite[Lemma 19 (i)]{symplectic} we know that $E/\Q_\ell$ has a 2-torsion point, hence admits a $\Q_\ell$-isogeny of degree~2. Therefore, $\Xp(\Q_\ell)\neq \emptyset$ by Corollary~\ref{cor:isogeny}.

Suppose now $(2/p) = 1$ and $(\ell /p)= -1$.
Let $E'/\Q_\ell$ be any elliptic curve with $e'=e=4$. 
From Theorem~\ref{T:mainTame4}
we know that the $G_{\Q_\ell}$-modules $E[p]$ and $E'[p]$ are isomorphic and that such an isomorphism is
anti-symplectic if and only if $v_{\ell}(\Delta_m)\not\equiv v_{\ell}(\Delta'_m)\pmod 4$.

To complete the proof, we show that such a curve $E'$ exists.

It is well known (e.g.~\cite[Table 5]{symplectic}) that if an elliptic curve $E/\Q_\ell$ has semistability defect $e=4$ then $v_\ell(\Delta_m(C))\in \{3,9\}$, so we have two cases.

If
$E$ satisfies $v_\ell(\Delta_m)=3$, we take
\[
E'  / \Q_{\ell} \; : \;  y^2=x^3+\ell^2x^2-\ell^3x.
\]
satisfying
\[
(v_{\ell}(c_4'), v_{\ell}(c_6'), v_{\ell}(\Delta_m'))=
\begin{cases}
(3,5,9), & \text{ if }\ell \geq 5,\\
(4,6,9),\: \tilde{\Delta}' \equiv 4 \pmod 9 & \text{ if }\ell=3,
\end{cases}
\]
and so $e'=4$ by~\cite[Table 5]{symplectic}.
If $v_\ell(\Delta_m)=9$, we take
\[
E' / \Q_{\ell} \; : \; y^2=x^3+\ell x^2-\ell x.
\]
satisfying
\[
(v_{\ell}(c_4'), v_{\ell}(c_6'), v_{\ell}(\Delta_{m}'))=
\begin{cases}
    (1, 2, 3), & \text{ if }\ell\geq 5,\\

(2,3,3),\: \tilde{\Delta}'\equiv 4 \pmod 9 & \text{ if }\ell=3;
\end{cases}
\]
this curve also has $e'=4$
by \cite[Table 5]{symplectic}.

\end{proof}

\begin{theorem}\label{thm:e4l2}
	Let $p \geq 5$ be a prime. Let $E/\Q_2$ be an elliptic curve with~$e=4$ and satisfying $\tilde{c}_4 \equiv 5\tilde{\Delta}_m \pmod 8$.
  Then $\Xp(\Q_2)\neq \emptyset$ if and only if $(2/p) = -1$.
\end{theorem}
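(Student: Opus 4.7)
The plan is to mirror the proofs of Theorems~\ref{thm:e3l3} and~\ref{thm:e4l}, using the wild symplectic criterion Theorem~\ref{T:mainWild4} together with the uniqueness results of Section~\ref{sec:non-ab}.

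For the ``only if'' direction, I would assume $\Xp(\Q_2)\neq\emptyset$ and pick $E'/\Q_2$ admitting an anti-symplectic $G_{\Q_2}$-isomorphism $\phi\colon E[p]\to E'[p]$. The hypothesis $\tilde{c}_4\equiv 5\tilde{\Delta}_m\pmod 8$ combined with Theorem~\ref{thm:F}(3) forces $K=\Q_2(E[p])/\Q_2$ to be non-abelian, hence $\Q_2(E'[p])=K$ is also non-abelian. In particular, the image of $\rhobar_{E',p}$ is not contained in a Borel, so $E'$ does not have potentially multiplicative reduction (cf.\ the proof of Theorem~\ref{thm:multiplicative}). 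Since for $\ell=2\neq p$ the inertial image in the mod-$p$ representation of a curve with potentially good reduction has order equal to its semistability defect, the existence of the isomorphism forces $e'=e=4$. Applying Theorem~\ref{thm:F}(3) to $E'$ then gives $\tilde{c}_4'\equiv 5\tilde{\Delta}_m'\pmod 8$, and the equality $\Qun K=\Qun K'$ of inertial fields, together with the fact that $F\subseteq \Qun K$ and $F'\subseteq \Qun K'$ are both totally ramified of degree $4$ over $\Q_2$ (so intersect $\Qun$ trivially), forces $F=F'$. Theorem~\ref{T:mainWild4} now applies to the pair $(E,E')$; the iso is anti-symplectic, so $(2/p)^r=-1$, giving $(2/p)=-1$.

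For the ``if'' direction, assume $(2/p)=-1$. By Theorem~\ref{thm:F}(3) and Lemma~\ref{lem:qtwists}, after possibly replacing $E$ by its quadratic twist by $-1$ (which swaps $F_1\leftrightarrow F_2$ and preserves the hypothesis), I may assume $E$ has good reduction over $F_1$. The goal is to exhibit $E'/\Q_2$ with $e'=4$, satisfying $\tilde{c}_4'\equiv 5\tilde{\Delta}_m'\pmod 8$, having good reduction over $F_1$, and with $\tilde{c}_6'\not\equiv \tilde{c}_6\pmod 4$. Once such an $E'$ is produced, Theorem~\ref{T:mainWild4} gives $E[p]\simeq E'[p]$ as $G_{\Q_2}$-modules, and since $r=1$ and $(2/p)=-1$, the isomorphism is anti-symplectic, yielding a point of $\Xp(\Q_2)$. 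To construct $E'$, I would use an explicit table of Weierstrass models over $\Q_2$ analogous to the one invoked in the proof of Theorem~\ref{thm:e3l3}, containing, for each residue class $\tilde{c}_6\pmod 4\in\{1,3\}$, a model with $e'=4$, non-abelian $p$-torsion field, and good reduction over $F_1$; standard computations of $(v_2(c_4'),v_2(c_6'),v_2(\Delta_m'))$ and the residues $\tilde{c}_4',\tilde{c}_6',\tilde{\Delta}'\pmod 8$ (compared against \cite[Table 5]{symplectic}) verify the required invariants.

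The main obstacle is purely the wild-prime bookkeeping at $\ell=2$. Unlike the $\ell\neq 2$ case of Theorem~\ref{thm:e4l}, here the field $F$ of good reduction is not unique: one must track the pair $(F_i,\tilde{c}_6\pmod 4)$ simultaneously, and quadratic twisting by $-1$ (the only natural operation on hand) swaps $F_1\leftrightarrow F_2$ \emph{and} negates $\tilde{c}_6$, so cannot be used to toggle $\tilde{c}_6\pmod 4$ while preserving $F_i$. Consequently, the converse cannot be reduced to a single example plus a twist; instead one genuinely needs two distinct families of curves with good reduction over the \emph{same} $F_i$ realising both residues of $\tilde{c}_6\pmod 4$. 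Producing such explicit models and checking their invariants (including the wild condition $\tilde{c}_4'\equiv 5\tilde{\Delta}_m'\pmod 8$) is the core computational content of the proof.
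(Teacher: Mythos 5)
Your proposal is correct and follows essentially the same route as the paper: the ``only if'' direction via Theorem~\ref{T:mainWild4}, and the ``if'' direction by twisting by $-1$ to reduce to $F=F_1$ and then exhibiting, for each residue $\tilde{c}_6\pmod 4\in\{1,3\}$, an explicit curve with $e'=4$ and good reduction over $F_1$ (the paper uses the curves \LMFDBE{6912j1} and \LMFDBE{6912l1} from Table~\ref{tab:abelian_examples}). You also correctly identified the key subtlety that a single example plus quadratic twisting does not suffice here, which is exactly why the paper records two separate examples over the same field $F_1$.
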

\begin{proof}
Let $E/\Q_2$ be as in the statement.
If $\Xp(\Q_2)\neq \emptyset$ then there is $E'/\Q_2$ such that
$E[p]$ and $E'[p]$ are anti-symplectically isomorphic $G_{\Q_2}$-modules. It follows from Theorem~\ref{T:mainWild4} that $(2/p)= -1$.

To prove the converse, suppose $(2/p)=-1$.
From Theorem~\ref{thm:F}, either $E$ or $E^{(-1)}$ has good reduction over $F_1$.
By Lemma~\ref{lem:qtwists}, after replacing $E$ by $E^{(-1)}$ if needed, we can assume that $E$ has good reduction over $F = F_1$.

Let $E'/\Q_2$ be an elliptic curve such that $e'=e=4$, $\tilde{c}_4 \equiv 5\tilde{\Delta}_m \pmod 8$ and $E'/F$ has good reduction.
From Theorem~\ref{T:mainWild4} we have that $E[p]$ and $E'[p]$ are isomorphic $G_{\Q_2}$-modules and
such an isomorphism is anti-symplectic if and only if $\tilde{c}_6 \not\equiv \tilde{c}'_6 \pmod 4$.

To complete the proof, we show that such an elliptic curve $E'$ exists.

As illustrated in the proof of Theorem~\ref{thm:e4l}, it suffices to give an example of $E'$ with $e'=4$ for each value of $\tilde{c}_6 \pmod 4 \in\{1, 3\}$. These can be found in the first two rows
of Table~\ref{tab:abelian_examples}.
\end{proof}
\begin{table}[ht]
    \centering
    \begin{tabular}{|c||c|c|c|}
        \hline
        $(\ell, e)$ & $E'/\Q_\ell$ & $F$ & Additional conditions \\ \hline
        (2,4)& \LMFDBE{6912j1}  & $x^4 + 12x^2 + 6$ &$\tilde{c}_6'\equiv 1 \pmod 4$, $\tilde{c}_4' \equiv 5\tilde{\Delta}' \pmod 8$ \\ \hline
         (2,4)& \LMFDBE{6912l1}  & $x^4 + 12x^2 + 6$  & $\tilde{c}_6'\equiv 3 \pmod 4$, $\tilde{c}_4' \equiv 5\tilde{\Delta}' \pmod 8$ \\ \hline
        (3,3)&  \LMFDBE{25920z1} & $x^3+3x^2+3$ & $\tilde{c}_6'\equiv 1 \pmod 3$, $\tilde{\Delta}'\equiv 2 \pmod 3$ \\ \hline
        (3,3)&  \LMFDBE{25920v1} &$x^3+3x^2+3$  & $\tilde{c}_6'\equiv 2 \pmod 3$, $\tilde{\Delta}'\equiv 2 \pmod 3$ \\ \hline
    \end{tabular}
   \caption{Examples
   of $E'/\Q_\ell$ with non-abelian $p$-torsion field and $\gcd(\ell,e)\neq 1$ used in the proofs of Theorems~\ref{thm:e3l3} and~\ref{thm:e4l2}; this table can be verified with the code given in \cite[Table 1]{git}.}
    \label{tab:abelian_examples}
\end{table}

\section{The case of non-abelian inertia}
Let $E/\Q_\ell$ be an elliptic curve with potentially good reduction and non-abelian inertial group~$\Phi$. This happens precisely when $(\ell,e)\in \{(2,8),(2,24),(3,12)\}$ (see~\cite{Kraus}).

\subsection{The case \texorpdfstring{$\ell=2$ and semistability defect $e=8$ or $24$}{}} \label{s:e8e24}

Let $E/\Q_2$ be an elliptic curve of discriminant $\Delta_E$ with potentially good reduction and semistability defect~$e=8$ or $e=24$.
Recall that the inertial field~$L$ of $E$ satisfies
$L = \Q_2^{\text{un}}(E[p])$ for all $p \geq 3$.
Let $L_3=\Q_2(E[3])$ and $U_3$ be the maximal unramified subextension of $L_3$.
In particular, $E/L_3$ has good reduction.

\begin{lemma}\label{lem:l2non-ab}
    Let $E/\Q_2$, $L_3$ and $U_3$ as above. Then,
    \[ U_3=\Q_2(\zeta_3), \qquad G_{U_3}=G_{L_3} \cdot I_2 \quad \text{ and } \quad \rhobar_{E,p}(\Frob_{L_3})=-2 \cdot \Id.\]
\end{lemma}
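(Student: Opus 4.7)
To establish $U_3 = \Q_2(\zeta_3)$, I will first note that the Weil pairing places $\zeta_3 \in L_3$, and since $\Q_2(\zeta_3)/\Q_2$ is unramified, $\Q_2(\zeta_3) \subseteq U_3$. It then suffices to show $|G| = 2e$, where $G := \rhobar_{E,3}(G_{\Q_2}) \simeq \Gal(L_3/\Q_2)$ and $H := \rhobar_{E,3}(I_2) = \Gal(L_3/U_3)$ has order $e$; here $H \simeq Q_8$ if $e = 8$ and $H = \SL_2(\F_3)$ if $e = 24$, since $\Phi$ acts faithfully on $E[3]$ and lands in $\SL_2(\F_3)$ (because $\chi_3$ is unramified at $2$). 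Three constraints pin down $G$ inside $\GL_2(\F_3)$: (i) $H \triangleleft G$; (ii) $G/H \simeq \Gal(U_3/\Q_2)$ is cyclic; and (iii) $\det|_G$ is surjective onto $\F_3^\times$ because $\chi_3(\Frob_2) = 2$. For $e = 24$, (iii) rules out $G = \SL_2(\F_3)$, leaving $G = \GL_2(\F_3)$ of order $48$. For $e = 8$, since $Q_8$ is characteristic in $\SL_2(\F_3)$ it is normal in $\GL_2(\F_3)$; a short computation (conjugation by $\mathrm{diag}(1,-1)$ inverts the $C_3$-quotient $\SL_2(\F_3)/Q_8$) gives $\GL_2(\F_3)/Q_8 \simeq S_3$. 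The subgroups of $\GL_2(\F_3)$ containing $Q_8$ with cyclic quotient therefore have orders $8$, $16$, or $24$, and (iii) eliminates $8$ and $24$, forcing $|G| = 16$. In both cases $|G| = 2e$, which yields the first equality.

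The equality $G_{U_3} = G_{L_3} \cdot I_2$ is immediate from the Galois correspondence: since $I_2 = G_{\Q_2^{\un}}$, the closed subgroup generated by $G_{L_3}$ and $I_2$ corresponds to $L_3 \cap \Q_2^{\un}$, which is by definition $U_3$. For the Frobenius formula, $E/L_3$ has good reduction (because $L_3 \cdot \Q_2^{\un} = L$), and its reduction $\overline{E}$ lies over the residue field of $U_3 = \Q_2(\zeta_3)$, namely $\F_4$. By construction $\Frob_{L_3}$ acts trivially on $E[3]$, so $\overline{E}[3] \subseteq \overline{E}(\F_4)$; combined with the Hasse bound this forces $\#\overline{E}(\F_4) = 9$, so $\varphi_4 \in \End(\overline{E})$ satisfies $(\varphi_4 + 2)^2 = 0$. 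On the other hand $\varphi_4 + 2$ kills $\overline{E}[3]$, so it factors through $[3]$ as $\varphi_4 + 2 = 3\psi$ for some $\psi \in \End(\overline{E})$. Then $9\psi^2 = 0$ in the domain $\End(\overline{E})$ (an order in either an imaginary quadratic field or a quaternion algebra), forcing $\psi = 0$ and $\varphi_4 = -2$ as an endomorphism. Reducing modulo $p$ yields $\rhobar_{E,p}(\Frob_{L_3}) = -2 \cdot \Id$ on $E[p]$ for every prime $p \geq 3$.

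The main obstacle is the $e = 8$ case of the first assertion: one has to correctly identify $\GL_2(\F_3)/Q_8 \simeq S_3$ to exclude the option $|G| = 48$, and then combine this with the surjectivity of $\det|_G$ to rule out the intermediate orders $8$ and $24$, pinning down $|G| = 16$. The other two claims then follow smoothly from the Galois correspondence and the standard endomorphism-ring trick, respectively.
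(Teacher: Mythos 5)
Your proof is correct, and for two of the three assertions it takes a genuinely more self-contained route than the paper. For $[U_3:\Q_2]=2$ the paper simply reads off $\#\Gal(L_3/\Q_2)\in\{16,48\}$ from Dokchitser--Dokchitser's table of local invariants of elliptic curves over $\Q_2$, whereas you reconstruct this from scratch inside $\GL_2(\F_3)$: normality of inertia, cyclicity of the unramified quotient, and surjectivity of $\det\rhobar_{E,3}=\chi_3$ force $|G|=2e$, with the identification $\GL_2(\F_3)/Q_8\simeq S_3$ doing the real work in the $e=8$ case (your computation there is correct: $Q_8$ is the unique Sylow $2$-subgroup of $\SL_2(\F_3)$, hence normal in $\GL_2(\F_3)$, and the orders $8$ and $24$ are excluded because those subgroups lie in $\SL_2(\F_3)$). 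For the Frobenius computation the paper invokes Coppola's result to get the explicit residue curve $y^2+y=x^3$ and count $\#\Ebar(\F_4)=9$, then cites Centeleghe's theorem to upgrade $\Delta_{L_3}=0$ to the scalar statement $\rhobar_{E,p}(\Frob_{L_3})=-2\cdot\Id$; you instead obtain $\#\Ebar(\F_4)=9$ for free from $\Ebar[3]\subseteq\Ebar(\F_4)$ together with the Hasse bound, and you prove the scalar statement directly by factoring $\varphi_4+2=3\psi$ and using that $\End(\Ebar)$ has no zero divisors --- in effect an inline proof of the special case of Centeleghe's theorem that is needed. The middle assertion $G_{U_3}=G_{L_3}\cdot I_2$ is the same Galois-correspondence observation in both treatments. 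What your approach buys is independence from three external references (Coppola, Dokchitser--Dokchitser, Centeleghe) at the cost of a short group-theoretic and endomorphism-ring digression; both arguments are sound.
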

\begin{proof}
Let $F$ be the field obtained from $\Q_2$ by adjoining the coordinates of one point of exact
order $3$ and a cube root of the discriminant~$\Delta_E$. From \cite[Lemma 2.1.]{Cop2adic} we know that $E/F$ has good reduction and admits a model with residue curve $\Ebar: y^2+y=x^3$.
We have $F \subset L_3$ as $\Delta_E$ can be expressed in terms of the coordinates of $3$-torsion points. 

Since $E/L_3$ has good reduction and $L_3$ has residue field $\F_4$, we compute
$\overline{E}(\F_4)=9$, thus $a_{L_3} = -4$ and $\Delta_{L_3} = a_{L_3}^2-4\cdot 4 = 0$. Thus
$\rhobar_{E,p}(\Frob_{L_3})=-2 \cdot \Id$
by \cite[Theorem 2]{Centeleghe}.

From \cite[Table 1]{doks} we see that
$\Gal(L_3/\Q_2)$ has order 16 if $e=8$ and order 48 if $e=24$, therefore $[U_3 : \Q_2] = 2$
and $G_{U_3}=G_{L_3} \cdot I_2$. 

Finally, $U_3 = \Q_2(\zeta_3)$ by the uniqueness of the unramified quadratic extension of $\Q_2$.
\end{proof}

Let $\chi_3 : G_{\Q_2} \to \{ \pm 1 \}$ be the mod~$3$ cyclotomic character; it is the unique quadratic unramified character of $G_{\Q_2}$ and fixes~$U_3$.

\begin{proposition}\label{prop:l2non-ab}
    Let $E$ and $E'$ be two elliptic curves over $\Q_2$ with $e=e' \in \{8,24\}$
    and the same inertial field $L$.
    Then, after twisting $E[p]$ by $\chi_3$ if needed, we have that $E[p]$ and $E'[p]$ are isomorphic as $G_{\Q_2}$-modules.
\end{proposition}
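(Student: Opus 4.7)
The plan is to compare $\rho := \rhobar_{E,p}$ and $\rho' := \rhobar_{E',p}$ as homomorphisms $G_{\Q_2} \to \GL_2(\F_p)$, by matching them first on $G_{L_3}$ and then on $I_2$. Since $G_{U_3} = G_{L_3} \cdot I_2$ by Lemma~\ref{lem:l2non-ab}, this will give agreement on the index-$2$ subgroup $G_{U_3}$; and because $U_3 = \Q_2(\zeta_3)$ is the kernel field of $\chi_3$, the potential disagreement on the nontrivial coset will be exactly captured by the twist by $\chi_3$.

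For the matching on $G_{L_3}$: both $E/L_3$ and $E'/L_3$ have good reduction, so $\rho|_{G_{L_3}}$ and $\rho'|_{G_{L_3}}$ are unramified, and Lemma~\ref{lem:l2non-ab} gives $\rho(\Frob_{L_3}) = \rho'(\Frob_{L_3}) = -2 \cdot \Id$; hence $\rho|_{G_{L_3}} = \rho'|_{G_{L_3}}$ with image central in $\GL_2(\F_p)$. For the matching on $I_2$: both restrictions factor through the common non-abelian inertial group $\Phi = \Gal(L/\Q_2^{\un}) \in \{Q_8, \SL_2(\F_3)\}$, are faithful (since $L = \Q_2^{\un}(E[p]) = \Q_2^{\un}(E'[p])$), and have trivial determinant because $\det\rho = \chi_p$ and $\chi_p|_{I_2} = 1$ for odd $p$. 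By the inertial classification in~\cite{inertial}, each such $\Phi$ admits, up to $\GL_2(\F_p)$-conjugacy, a unique faithful $2$-dimensional $\F_p$-representation with trivial determinant (the three faithful $2$-dimensional irreducibles of $\SL_2(\F_3)$ differ by twists by characters of $\Phi^{\mathrm{ab}} \cong C_3$, and only the standard embedding has trivial determinant; for $Q_8$ the faithful $2$-dimensional irreducible is already unique). Hence there is $P \in \GL_2(\F_p)$ with $P\rho(g)P^{-1} = \rho'(g)$ for all $g \in I_2$; conjugation by $P$ does not disturb agreement on $G_{L_3}$, since the image there is scalar.

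After this conjugation, $\rho = \rho'$ on $G_{U_3}$. Pick $\sigma \in G_{\Q_2} \setminus G_{U_3}$; since $[G_{\Q_2} : G_{U_3}] = 2$, we have $\chi_3(\sigma) = -1$ and $\sigma^2 \in G_{U_3}$. Setting $A := \rho(\sigma)$ and $A' := \rho'(\sigma)$, the latter gives $A^2 = A'^2$, and for any $g \in G_{U_3}$ the equality $\rho(\sigma g \sigma^{-1}) = \rho'(\sigma g \sigma^{-1})$ yields $A\rho(g)A^{-1} = A'\rho(g)A'^{-1}$. Thus $C := A'^{-1}A$ centralizes $\rho(G_{U_3}) \supseteq \rho(I_2) \cong \Phi$. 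Since the $1$-dimensional characters of $Q_8$ and $\SL_2(\F_3)$ are trivial on the center $\{\pm 1\}$, a faithful $2$-dimensional representation of $\Phi$ cannot split over $\overline{\F}_p$; hence $\rho|_{I_2}$ is absolutely irreducible and Schur's lemma forces $C = \lambda \cdot \Id$ with $\lambda \in \F_p^*$. Then $A^2 = A'^2$ gives $\lambda^2 = 1$, so $\lambda = \pm 1$. If $\lambda = 1$ then $\rho = \rho'$; if $\lambda = -1$ then, using $\chi_3|_{G_{U_3}} = 1$ and $\chi_3(\sigma) = -1$, we obtain $\rho = \rho' \otimes \chi_3$, which is the desired isomorphism after twisting $E[p]$ by $\chi_3$.

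The main obstacle is the uniqueness of the inertial representation used in the matching on $I_2$: establishing that, for $\Phi \in \{Q_8, \SL_2(\F_3)\}$, the faithful $2$-dimensional $\F_p$-representation with trivial determinant is unique up to $\GL_2(\F_p)$-conjugacy is the non-trivial input we extract from~\cite{inertial}. Once this together with the scalar shape of $\rho|_{G_{L_3}}$ from Lemma~\ref{lem:l2non-ab} is in hand, the rest is a short linear-algebra computation relying on $G_{U_3} = G_{L_3} \cdot I_2$ and Schur's lemma.
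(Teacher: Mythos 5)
Your proof is correct and follows essentially the same route as the paper's: match the representations on $I_2$, extend to $G_{U_3}=G_{L_3}\cdot I_2$ using the scalar image of $\Frob_{L_3}$ from Lemma~\ref{lem:l2non-ab}, and resolve the remaining index-$2$ ambiguity as a twist by $\chi_3$ via irreducibility. The only difference is that you derive the $I_2$-isomorphism from the uniqueness (up to conjugacy) of the faithful two-dimensional trivial-determinant representation of $Q_8$ and $\SL_2(\F_3)$ — a correct substitute for the paper's citation of \cite[Theorems~7 and~9]{symplectic} — and you spell out explicitly the Schur-lemma step that the paper leaves implicit.
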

\begin{proof} Since $E$ and $E'$ have the same inertial field, then $E[p]$ and $E'[p]$ are isomorphic as $I_2$-modules by \cite[Theorems 7 and 9]{symplectic}. Using the notation above, and applying
Lemma~\ref{lem:l2non-ab} to both curves, we get $U_3 = U_3'$ and
\[
  \quad G_{U_3}=G_{L_3} \cdot I_2  = G_{L'_3} \cdot I_2, \quad \text{ and } \quad
  \rhobar_{E,p}(\Frob_{L_3})=\rhobar_{E',p}(\Frob_{L'_3})=-2\cdot\Id.
\]
Both images of Frobenius commute with all matrices in $\GL_2(\F_p)$.
 Thus, the isomorphism of $I_2$-modules extends to an isomorphism of $G_{U_3}$-modules.
The representations $\rhobar_{E,p}$ and $\rhobar_{E,p}$ are irreducible (because the action of inertia is already irreducible) and their restrictions to~$G_{U_3}$ are equal. Therefore, either
    \[
    \rhobar_{E,p} \simeq \rhobar_{E',p}\qquad\text{ or } \qquad\rhobar_{E,p} \otimes \chi_3 \simeq \rhobar_{E',p},
    \]
concluding the proof.
\end{proof}
\begin{theorem}\label{thm:l2n-abinertia}
    Let $p\geq 7$ and $E/\Q_2$ an elliptic curve with $e\in \{8,24\}$. Then, we have $\Xp(\Q_2)\neq \emptyset$ if and only if $(2/p)=-1$.
\end{theorem}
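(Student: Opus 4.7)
The plan is to follow the pattern of Theorems~\ref{thm:e3l3} and~\ref{thm:e4l2}: first use Proposition~\ref{prop:l2non-ab} to show the $G_{\Q_2}$-isomorphism class of $E[p]$ is essentially unique among curves with $e=e'\in\{8,24\}$ and the same inertial field, then compute the multiplier of the resulting $p$-torsion isomorphism and show it equals $2$ modulo squares.

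For the ``only if'' direction, suppose $\Xp(\Q_2)\neq\emptyset$, so there is $E'/\Q_2$ and an anti-symplectic $G_{\Q_2}$-equivariant isomorphism $\phi\colon E[p]\to E'[p]$. Since $\Phi\in\{Q_8,\SL_2(\F_3)\}$ is non-abelian with irreducible $2$-dimensional action on $E[p]$ for $p\geq 7$, the existence of $\phi$ forces $e'=e$ and the same inertial field. By Proposition~\ref{prop:l2non-ab}, $\rhobar_{E',p}$ is isomorphic to $\rhobar_{E,p}$ or $\rhobar_{E,p}\otimes\chi_3$. Since $\chi_3$ is the unramified quadratic character, the latter case says $E'$ is the unramified quadratic twist of a curve realising the former, so Lemma~\ref{lem:qtwists} reduces us to $\rhobar_{E,p}\simeq\rhobar_{E',p}$. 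By Schur's lemma applied to this irreducible representation, the centralizer of its image in $\GL_2(\F_p)$ is scalar, and hence the multiplier $r(\phi)\in\F_p^\times/(\F_p^\times)^2$ is independent of $\phi$.

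The core is then to show $r(\phi)\equiv 2\pmod{(\F_p^\times)^2}$, so that $\phi$ being anti-symplectic forces $(2/p)=-1$. We plan to carry this out using Lemma~\ref{lem:l2non-ab}: over $L_3=\Q_2(E[3])$ both $E$ and $E'$ have good reduction with $\rhobar_{\cdot,p}(\Frob_{L_3})=-2\cdot\Id$, so the reductions $\overline{E},\overline{E'}/\F_4$ are isomorphic and we may lift the identity on their $p$-torsion to a $G_{L_3}$-equivariant isomorphism $E[p]\to E'[p]$; the descent obstruction to $G_{\Q_2}$-equivariance then lies in $H^1(\Gal(L_3/\Q_2),\F_p^\times)$ and encodes the multiplier modulo squares. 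Realizing the pair $(E,E')$ via a ramified quadratic twist whose character becomes trivial over $L_3$ (and whose uniformizer is essentially $\sqrt{2}$) yields the multiplier $2$. For the ``if'' direction, when $(2/p)=-1$ this same construction furnishes an explicit anti-symplectic partner $E'$, giving a $\Q_2$-point on $\Xp$.

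The hard part will be this explicit multiplier computation, which requires a case split between $e=8$ and $e=24$. For $e=8$, $\Gal(L/\Q_2^{\text{un}})\cong Q_8$ has three ramified quadratic subextensions over $\Q_2^{\text{un}}$, providing natural candidate twists and a direct verification. For $e=24$ the group $\SL_2(\F_3)$ has abelianization of odd order $3$, so $L/\Q_2^{\text{un}}$ has no quadratic subfield and the $\sqrt{2}$-twist must be detected through the unramified part of $L_3/\Q_2$ instead; this case would likely be handled either by a direct computation using the inertial classification in~\cite{inertial}, or by reducing to the $e=8$ case via an appropriate isogeny.
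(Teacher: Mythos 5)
Your overall architecture (uniqueness of the $G_{\Q_2}$-module up to the unramified twist $\chi_3$ via Proposition~\ref{prop:l2non-ab}, then a multiplier computation showing the only possible non-square multiplier is $2$) matches the skeleton of the paper's argument, but the two steps that actually carry the proof are missing. First, the multiplier computation: the paper does not redo it, it quotes \cite[Theorems~7--9]{symplectic}, which say that for $e=24$ the symplectic type of the (unique up to scalar) isomorphism $E[p]\to E'[p]$ is governed by $v_2(\Delta_m)\bmod 3$ and for $e=8$ by the conductor exponent together with $\tilde c_4\bmod 4$ (or the case split of \cite[Table~3]{symplectic}), with non-square multiplier always lying in the class of $2$. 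Your $H^1$-descent sketch is not carried out, and for $e=24$ you explicitly leave it open; the fallback you propose --- ``reducing to the $e=8$ case via an appropriate isogeny'' --- cannot work, because the inertial field $L=\Qun(E[q])$ and hence the semistability defect $e=[L:\Qun]$ are isogeny invariants, so no isogeny moves a curve with $e=24$ to one with $e=8$.

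Second, and more seriously, the ``if'' direction requires \emph{exhibiting} a curve $E'/\Q_2$ with the same inertial field lying in the opposite case of the symplectic criterion; its existence is not automatic. Your proposed construction of $E'$ as a ramified quadratic twist of $E$ fails outright for $e=24$: since $\Phi\cong\SL_2(\F_3)$ has projective image $A_4$ on $E[p]$, the representation $\rhobar_{E,p}$ is not induced from any quadratic extension of $\Q_2$, so $\rhobar_{E,p}\otimes\chi\not\simeq\rhobar_{E,p}$ for every nontrivial quadratic character $\chi$ (ramified or not, apart from the ambiguity already absorbed by $\chi_3$), and no quadratic twist of $E$ has $p$-torsion isomorphic to $E[p]$. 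Even for $e=8$, where the $Q_8$-representation is self-twist under the three ramified quadratic characters, you would still have to check that the resulting intertwiner has determinant in the class of $2$ rather than $1$ (conjugation inside $Q_8\subset\SL_2(\F_p)$ itself has determinant $1$, so the answer depends on the Frobenius structure and is not obvious). The paper resolves the existence question by running through the finite list of possible inertial fields from \cite[Tables~10 and~17]{inertial} and producing, for each, explicit LMFDB curves in both cases of the relevant criterion (Table~\ref{tab:l2examples}); some substitute for this verification is indispensable and is absent from your proposal.
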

\begin{proof}

Let $E$ be as in the statement, and $L$ its inertial field.
If $\Xp(\Q_\ell)\neq \emptyset$ then there is $E'/\Q_\ell$ such that
$E[p]$ and $E'[p]$ are anti-symplectically isomorphic $G_{\Q_2}$-modules. It follows from \cite[Theorem 7 and 8]{symplectic} that $(2/p)=-1$.

To prove the converse, suppose that $(2/p)= -1$.
Let $E'/\Q_2$ be any elliptic curve with $e'=e$ and the same inertial field $L$. From Proposition~\ref{prop:l2non-ab} we know that $E[p]$ is isomorphic to $E'[p]$
as $G_{\Q_2}$-modules up to a twist by $\chi_3$. So, by Lemma~\ref{lem:qtwists}, after replacing $E$ by $E^{-3}$ if needed, we can assume that $E[p] \simeq E'[p]$ as $G_{\Q_2}$-modules.
We divide it into cases.

{\sc Case $e=8$.} By \cite[Theorem 9]{symplectic}, after twisting both curves by $2$ if necessary, we can assume that $E$ and $E'$ have either both conductor $2^5$ or both conductor $2^8$. Moreover, they are anti-symplectically isomorphic if and only if one of the following holds
\begin{itemize}
    \item[(A)] Both curves have conductor $2^5$ and are in different cases of \cite[Table 3]{symplectic};
    \item[(B)] Both curves have conductor $2^8$ and $\tilde{c}_4 \not\equiv \tilde{c}_4' \pmod4$.
\end{itemize}
We will now show that such a curve $E'$ exists for each possible inertial field $L$. These fields are classified by \cite[Table 10]{inertial}. We note that in {\it{loc. cit.}} $L'/\Q_2$ stands for the totally ramified field of degree $8$ over which $E$ acquires good reduction. Since $L=\Q_2^{\text{un}} \cdot L'$, these completely classify all the inertial fields. 
As noted above, it is enough to consider only the elliptic curves $E$ with conductor $2^5$ or $2^8$. Moreover,
by the same table and Lemma~\ref{lem:qtwists}, if $N_E=2^8$, after twisting $E$ by~$2$ if needed,
we can assume that  $E$ has $L'=\LMFDBL{2.8.24.66}$. Hence, the possibilities of inertial types that we need to consider are given by $L' \in \{\LMFDBL{2.8.16.65}, \LMFDBL{2.8.16.66}, \LMFDBL{2.8.24.66}\}$.

For example, if $E$ has $L'= \LMFDBL{2.8.16.65}$ and it is in Case~(a) of \cite[Table 3]{symplectic} we take $E'=\LMFDBE{2592f1}$. One checks that $E'$ has $e'=8$ by \cite[p. 358-359 ]{Kraus}, acquires good reduction over $L'$ and belongs to Case~(b) of \cite[Table 3]{symplectic}.

From the above it suffices to give examples of $E'$ with $e'=8$ and
\begin{itemize}
    \item[(A')]  $N_{E'}=2^5$, field $L'\in \{\LMFDBL{2.8.16.65}, \LMFDBL{2.8.16.66}\}$ one in each case of \cite[Table 3]{symplectic},
    \item[(B')] $N_{E'}=2^8$, field $L'=\LMFDBL{2.8.24.66}$ one for each value of $\tilde{c}_4'  \pmod 4\in \{\pm1\}$.
\end{itemize}
Such examples can be found in Table~\ref{tab:l2examples}, concluding the proof of this case.

{\sc Case $e=24$.} By \cite[Theorem 7]{symplectic}, the curves $E$ and $E'$ are anti-symplectically isomorphic if and only if $v_2(\Delta_m)\not\equiv v_2(\Delta_m') \pmod 3$.

We will now show that such a curve $E'$ exists for each possible inertial field $L$. These fields are classified in \cite[Table 17]{inertial}. We note that in this case, $L'/\Q_2$ stands for the totally ramified field of degree $8$ whose splitting field, call it $F$, is totally ramified of degree $24$ and $E/F$ has good reduction. Since $L=\Q_2^{\text{un}}F$, these completely classify all the inertial fields. The same table shows that, after twisting $E$ and $E'$ by a quadratic character if needed, we can assume that both curves have $L'\in\{\LMFDBL{2.8.10.2}, \LMFDBL{2.8.22.132}\}$.

As illustrated before, for each of these $L'$, it suffices to give an example of $E'$ with $e'=24$ for each value of $v_2(\Delta_m') \pmod 3 \in\{1, 2\}$.
These are also given in Table~\ref{tab:l2examples}.
\end{proof}
\begin{table}[h]
    \centering
    \begin{tabular}{|c||c|c|c|c|}
        \hline
        $e$ & $L'$ & $N_E = N_{E'}$ & $E'/\Q_2$ & Additional  conditions  \\ \hline
        8& \LMFDBL{2.8.16.65} &$2^5$ & \LMFDBE{96a1} & Case~(a) of $(*)$
        \\ \hline
        8& \LMFDBL{2.8.16.65} & $2^5$&  \LMFDBE{2592f1}& Case~(b) of $(*)$
        \\ \hline
           8&\LMFDBL{2.8.16.66} & $2^5$&\LMFDBE{288a1} & Case~(a) of $(*)$
           \\ \hline
        8&\LMFDBL{2.8.16.66} & $2^5$& \LMFDBE{288e2}& Case~(b) of $(*)$\\ \hline
        8&\LMFDBL{2.8.24.66} & $2^8$& \LMFDBE{256b2} & $\tilde{c}_4'\equiv 1\pmod4$ \\ \hline
        8& \LMFDBL{2.8.24.66}& $2^8$& \LMFDBE{2304a2} &$\tilde{c}_4'\equiv -1\pmod4$ \\ \hline
        24& \LMFDBL{2.8.10.2} &$2^3$& \LMFDBE{648b1} & $v_2(\Delta_m' )\equiv 1\pmod 3$\\ \hline
        24& \LMFDBL{2.8.10.2} &$2^3$ & \LMFDBE{6696q1} & $v_2(\Delta_m' )\equiv -1\pmod 3$\\ \hline
        24& \LMFDBL{2.8.22.132}  &$2^7$ & \LMFDBE{3456a1} & $v_2(\Delta_m' )\equiv 1\pmod 3$\\ \hline
        24& \LMFDBL{2.8.22.132}   &$2^7$ & \LMFDBE{289152l1} & $v_2(\Delta_m' )\equiv -1\pmod 3$\\ \hline
    \end{tabular}
    \caption{Examples of $E'/\Q_2$ with $e=8$ or $24$ used in the proof of Theorem~\ref{thm:l2n-abinertia}. Here $(*)$ stands for \cite[Table 3]{symplectic}; this table can be verified with the code given in \cite{git}.}
    \label{tab:l2examples}
\end{table}

\subsection{The case \texorpdfstring{$\ell=3$ and semistability defect $e=12$}{}}

Let $E/\Q_3$ be an elliptic curve of discriminant $\Delta_E$ having potentially good reduction with semistability defect~$e=12$.
The inertial field~$L$ of $E$ satisfies
$L = \Q_3^{\text{un}}(E[p])$ for all $p \geq 5$.
We define also
\[
F:=\Q_3(E[2], \Delta_E^{1/4}), \quad F_0:=F(\zeta_4) \quad \text{ and } \quad U:=\Q_3(\zeta_4).
\]
Let $\chi_4 : G_{\Q_3} \to \{ \pm 1 \}$ be the mod~$4$ cyclotomic character; it is the unique quadratic unramified character of $G_{\Q_3}$ and fixes~$U$.
\begin{lemma}\label{lem:l3non-ab}
    Let $E/\Q_3$, $F'$ and $U$ be as above. Then,
    \[
     G_{U}=G_{F_0} \cdot I_3 \qquad \text{ and } \quad \rhobar_{E,p}(\Frob_{F_0})=-3 \cdot \Id.
    \]
\end{lemma}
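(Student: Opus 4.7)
The plan is to mirror the proof of Lemma~\ref{lem:l2non-ab}, swapping the roles of the primes $2$ and $3$: the $2$-torsion and a fourth root of the discriminant should now play the role that the $3$-torsion and a cube root of the discriminant played before.

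First I would verify that $E/F$ has good reduction and identify an explicit residual model. By analogy with \cite[Lemma~2.1]{Cop2adic} in the $2$-adic case, I expect $F = \Q_3(E[2], \Delta_E^{1/4})$ to be a totally ramified extension of $\Q_3$ of degree $12 = e$ over which $E$ acquires good reduction. Heuristically, the $C_3$-subgroup of $\Phi \cong C_3 \rtimes C_4$ is trivialised by passing to a splitting field of the $2$-torsion, and the remaining $C_4$-quotient is trivialised by adjoining $\Delta_E^{1/4}$. Moreover, since $j=0$ is the unique supersingular $j$-invariant in characteristic $3$, the residual curve $\overline{E}$ will be supersingular, and one can arrange for it to be defined over $\F_3$ with $a_3(\overline{E}) = 0$.

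With Step~1 in hand, the structural claim follows by pure Galois theory: $F/\Q_3$ totally ramified of degree $12$ and $U/\Q_3$ unramified quadratic give $F \cap U = \Q_3$, so $F_0 = F \cdot U$ has ramification index $12$ and maximal unramified subextension exactly $U$. Hence $F_0 \cap \Q_3^{\un} = U$, and the subgroup of $G_{\Q_3}$ generated by $G_{F_0}$ and $I_3$ fixes $F_0 \cap \Q_3^{\un} = U$, yielding $G_U = G_{F_0} \cdot I_3$.

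For the Frobenius assertion, the residue field of $F_0$ is $\F_9$, and with $\overline{E}$ defined over $\F_3$ with $a_3(\overline{E}) = 0$, Newton's identities give
\[
a_{F_0}(E) = a_3(\overline{E})^2 - 2\cdot 3 = -6, \qquad \Delta_{F_0} = (-6)^2 - 4\cdot 9 = 0.
\]
Exactly as in the proof of Lemma~\ref{lem:l2non-ab}, \cite[Theorem 2]{Centeleghe} then forces $\rhobar_{E,p}(\Frob_{F_0}) = -3 \cdot \Id$.

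The main obstacle is Step~1: I do not know a direct $3$-adic reference playing the role of \cite[Lemma~2.1]{Cop2adic}. The plan would be either to derive the statement from Papadopoulos' tables \cite{Papa} (using Kraus's criteria \cite{Kraus} to isolate the Kodaira types compatible with $e=12$), or to exhibit the reduction model by running through the finitely many minimal Weierstrass equations possible when $e=12$ and verifying the required good reduction directly. This is technical but should be routine.
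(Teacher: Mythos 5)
Your plan is correct and is essentially the paper's proof: the paper also deduces $G_U = G_{F_0}\cdot I_3$ from the fact that $F$ is totally ramified of degree $12$ over which $E$ has good reduction, and then computes $a_{F_0}=-6$, $\Delta_{F_0}=0$ and invokes \cite[Theorem 2]{Centeleghe}. The $3$-adic reference you were missing for Step~1 exists: the paper combines Kraus's Corollaire to Lemme~3 in \cite{Kraus} (giving $L=\Q_3^{\un}F$, hence good reduction of $E/F$ and $[F:\Q_3]=12$ totally ramified, cf.\ \cite[Remark 3.1]{Cop3adic}) with \cite[Lemma 3.4]{Cop3adic}, which supplies an explicit model of $E/F$ with residual curve $\overline{E}\colon y^2=x^3-x$. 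One caveat on your heuristic: supersingularity alone does not force $a_3(\overline{E})=0$, since over $\F_3$ there are supersingular curves (e.g.\ $y^2=x^3-x+1$) with $a_3=\pm 3$; so the identification of the specific residual model (or an equivalent computation of $\#\overline{E}(\F_9)=16$) is genuinely needed, exactly as you flagged.
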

\begin{proof}
    By the work of Kraus \cite[Corollaire to Lemme 3]{Kraus}, it follows that $L=\Q_3^{\text{un}}F$.
    Since $L/F$ is unramified, it follows that $E/F$ has good reduction and
    $F$ is totally ramified of degree~$12$ (see \cite[Remark 3.1]{Cop3adic}). Thus $G_{U}=G_{F_0} \cdot I_3$.

Moreover, from \cite[Lemma 3.4]{Cop3adic}, there is a model of $E/F$ with good reduction
and residual curve $\overline{E}: y^2=x^3-x$. Since $F_0 \supset F$ has residue field $\F_9$,
we have $\overline{E}(\F_9)=16$, hence $a_{F_0} = -6$ and $\Delta_{F_0} = 0$.
Thus, $\rhobar_{E,p}(\Frob_{F_0})=-3 \cdot \Id$ by \cite[Theorem 2]{Centeleghe}.
\end{proof}

\begin{proposition}\label{prop:l3non-ab}
Let $E$ and $E'$ be two elliptic curves over $\Q_3$ with $e=e'=12$
    and the same inertial field $L$.
    Then, after twisting $E[p]$ by $\chi_4$ if needed, we have that $E[p]$ and $E'[p]$ are isomorphic as $G_{\Q_3}$-modules.
\end{proposition}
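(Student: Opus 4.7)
My plan is to run the same three-step argument used in the proof of Proposition~\ref{prop:l2non-ab}, with Lemma~\ref{lem:l3non-ab} in place of Lemma~\ref{lem:l2non-ab}, the field~$F_0$ in place of~$L_3$, and~$U$ in place of~$U_3$.

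First, I would argue that, under the hypothesis that $E$ and $E'$ share the inertial field~$L$, their $p$-torsion modules are isomorphic as $I_3$-modules. Because $\Phi\cong C_3\rtimes C_4$ acts faithfully through the embedding $\gamma_E:\Phi\hookrightarrow\Aut(\Ebar)$ appearing in~\eqref{E:phi2}, the isomorphism class of $E[p]$ restricted to $I_3$ depends only on the conjugacy class of the image of~$\Phi$ inside~$\Aut(\Ebar)[p]$, and hence only on~$L$. This can be read off the tabulation of inertial types for $e=12$ in~\cite{inertial}, exactly paralleling the $(\ell,e)=(2,8)$ and $(2,24)$ arguments of~\cite{symplectic} invoked in Proposition~\ref{prop:l2non-ab}.

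Second, applying Lemma~\ref{lem:l3non-ab} to each of the two curves yields
\[
G_U=G_{F_0}\cdot I_3=G_{F_0'}\cdot I_3 \qquad\text{and}\qquad \rhobar_{E,p}(\Frob_{F_0})=\rhobar_{E',p}(\Frob_{F_0'})=-3\cdot\Id.
\]
Because the matrix $-3\cdot\Id$ is scalar and therefore central in~$\GL_2(\F_p)$, any $I_3$-equivariant isomorphism $E[p]\simeq E'[p]$ automatically intertwines these matching Frobenius elements as well, so it promotes to a $G_U$-equivariant isomorphism.

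Finally, both $\rhobar_{E,p}$ and $\rhobar_{E',p}$ are irreducible (since the inertial action alone is already irreducible in this non-abelian setting) and, by the previous step, their restrictions to the index-two subgroup $G_U\subset G_{\Q_3}$ agree. Thus they can differ only by a character of $\Gal(U/\Q_3)\cong\Z/2\Z$, and since $\chi_4$ is the unique non-trivial such character, we conclude that either $\rhobar_{E,p}\simeq\rhobar_{E',p}$ or $\rhobar_{E,p}\otimes\chi_4\simeq\rhobar_{E',p}$, as desired. The delicate step will be the initial $I_3$-module identification; once that is available, the centrality of the scalar Frobenius image and the fact that~$\chi_4$ is the unique quadratic unramified character of~$G_{\Q_3}$ render the rest of the argument automatic.
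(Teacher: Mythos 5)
Your proposal is correct and follows exactly the route the paper takes: the paper's proof is literally ``as the proof of Proposition~\ref{prop:l2non-ab} with $L_3$ and $U_3$ replaced by $F_0$ and $U$,'' i.e.\ the same three steps of an $I_3$-isomorphism from equality of inertial fields, extension to $G_U$ via the central scalar image of $\Frob_{F_0}$ from Lemma~\ref{lem:l3non-ab}, and the index-two/irreducibility argument yielding the possible twist by $\chi_4$. The only cosmetic difference is that for the first step the paper (implicitly, via the $\ell=2$ template) cites the relevant symplectic-criteria theorems of \cite{symplectic} for $e=12$ rather than re-deriving the $I_3$-module identification from the classification in \cite{inertial} as you sketch.
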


\begin{proof}
This follows as the proof of Proposition~\ref{prop:l2non-ab} with $L_3$ and $U_3$ replaced by $F_0$ and $U$.
\end{proof}
\begin{theorem}\label{thm:l3n-abinertia}
    Let $p\geq 7$ and $E/\Q_3$ be an elliptic curve with 
    semistability defect $e=12$. Then, $\Xp(\Q_3)\neq \emptyset$ if and only if $(3/p)=-1$.
\end{theorem}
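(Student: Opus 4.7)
The plan is to mirror the structure of the proof of Theorem~\ref{thm:l2n-abinertia}, replacing the role of the inertial data at~$\ell=2$ with the corresponding data at~$\ell=3$ developed in Lemma~\ref{lem:l3non-ab} and Proposition~\ref{prop:l3non-ab}.

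For the forward direction, if $\Xp(\Q_3)\neq\emptyset$ then there exists $E'/\Q_3$ such that $E[p]$ and $E'[p]$ are anti-symplectically isomorphic as $G_{\Q_3}$-modules. The relevant symplectic criterion in~\cite{symplectic} for $e=12$ at~$\ell=3$ (the analogue of \cite[Theorem~7]{symplectic} used in the $e=24$ case) forces $(3/p)=-1$, giving the necessity.

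For the converse, suppose $(3/p)=-1$. By Proposition~\ref{prop:l3non-ab}, for any $E'/\Q_3$ with $e'=12$ and the same inertial field~$L$ as~$E$, we have $E[p]\simeq E'[p]$ as $G_{\Q_3}$-modules, possibly after twisting by the unramified quadratic character~$\chi_4$. Since twisting corresponds to replacing~$E$ by its quadratic twist~$E^{-1}$ (which is fixed by~$\chi_4$), Lemma~\ref{lem:qtwists} reduces us to the case where the two $G_{\Q_3}$-modules are already isomorphic. The symplectic criterion in~\cite{symplectic} for $e=12$ then translates the anti-symplectic condition into a discrete numerical condition on~$E'$ (analogous to $v_2(\Delta_m')\not\equiv v_2(\Delta_m)\pmod 3$ in the $e=24$ case); to finish, it suffices to exhibit, for every possible inertial field~$L$ for~$E$, an elliptic curve~$E'/\Q_3$ with $e'=12$, the same inertial field, and satisfying the opposite value of this discrete invariant.

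The classification of the possible inertial fields when $(\ell,e)=(3,12)$ is given by~\cite[Table]{inertial} (the $\ell=3$ counterpart of \cite[Table~17]{inertial}): up to a quadratic twist (which is harmless by Lemma~\ref{lem:qtwists}), there are only finitely many possible totally ramified degree~$12$ extensions $F/\Q_3$ over which $E$ acquires good reduction, hence only finitely many inertial fields $L=\Q_3^{\text{un}}F$. For each such~$F$ and each value of the relevant discrete invariant modulo the appropriate modulus, I would exhibit an explicit elliptic curve $E'/\Q_3$ drawn from the LMFDB, verify using Kraus's tables \cite{Kraus} that $e'=12$, check that $E'$ acquires good reduction over the required~$F$, and confirm the required value of the discrete invariant via a short \texttt{Magma} computation; the resulting examples can be collected in a table entirely analogous to Table~\ref{tab:l2examples}. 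The main obstacle is bookkeeping: ensuring completeness of the list of inertial fields (up to twist) and producing a concrete example for each case; once that finite verification is carried out, Corollary~\ref{cor:uniqueEp}-style uniqueness together with the anti-symplectic criterion from~\cite{symplectic} yields the desired $E'$, giving a point on~$\Xp(\Q_3)$.
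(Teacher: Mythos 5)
Your proposal follows essentially the same route as the paper: necessity via the symplectic criterion of \cite{symplectic} for $(\ell,e)=(3,12)$, sufficiency via Proposition~\ref{prop:l3non-ab} plus the unramified twist by $-1$ and Lemma~\ref{lem:qtwists}, and then a finite table of explicit curves $E'$ (one per inertial field from \cite{inertial} and per value of the discrete invariant, which in the paper is the case distinction of \cite[Table~4]{symplectic}). The paper carries out exactly this finite verification in Table~\ref{tab:l3examples}, so your sketch is correct and only leaves the bookkeeping you already identified.
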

\begin{proof}
Let $E$ be as in the statement, and $L$ its inertial field.
If $\Xp(\Q_\ell)\neq \emptyset$ then there is $E'/\Q_\ell$ such that
$E[p]$ and $E'[p]$ are anti-symplectically isomorphic $G_{\Q_3}$-modules. It follows from \cite[Theorem 10]{symplectic} that $(3/p)=-1$.

To prove the converse, suppose that $(3/p)= -1$.
Let $E'/\Q_3$ be any elliptic curve with $e'=e=12$ and the same inertial field $L$. From Proposition~\ref{prop:l3non-ab} we know that $E[p] \simeq E'[p]$ as $G_{\Q_3}$-modules up to a twist by $\chi_4$. So, by Lemma~\ref{lem:qtwists}, after replacing $E$ by $E^{-1}$ if needed, we can assume that $E[p] \simeq E'[p]$ as $G_{\Q_3}$-modules. 

By \cite[Theorem 11]{symplectic}, the curves $E$ and $E'$ are anti-symplectically isomorphic if and only if they are in different cases of \cite[Table 4]{symplectic}.

We will now show that such a curve $E'$ exists for each possible inertial field $L$. These fields are classified in \cite[Table 4]{inertial}. We note that in {\it{loc. cit.}} $L'/\Q_3$ stands for the totally ramified field of degree $12$ over which $E$ acquires good reduction. Since $L=\Q_3^{\text{un}} \cdot L'$, these completely classify all the inertial fields.

As in the proof of Theorem~\ref{thm:l2n-abinertia}, for each such $L'$, it is enough to give two examples of curves $E'$ with $e=12$ and good reduction over $L'$, in different cases of \cite[Table 4]{symplectic}; we included these examples in Table~\ref{tab:l3examples}.

\begin{table}[h]
    \centering
    \begin{tabular}{|c||c|c|c|c|}
        \hline
        $e$ & $L'$ & $N_E = N_{E'}$ & $E'/\Q_3$ & Case of $(*)$
        \\ \hline
        12& \LMFDBL{3.12.15.1} &$3^3$ & \LMFDBE{1728v1} & Case~(a) \\ \hline
       12& \LMFDBL{3.12.15.1} &$3^3$ & \LMFDBE{27a1} & Case~(b) \\ \hline
    12& \LMFDBL{3.12.15.12} &$3^3$ & \LMFDBE{12096dd1} & Case~(a) \\ \hline
    12& \LMFDBL{3.12.15.12} &$3^3$ & \LMFDBE{54a1} & Case~(b) \\ \hline
 12& \LMFDBL{3.12.23.122} &$3^5$ & \LMFDBE{972b1} & Case~(c)  \\ \hline
 12& \LMFDBL{3.12.23.122} &$3^5$ & \LMFDBE{388800oh1} & Case~(d)  \\ \hline
12& \LMFDBL{3.12.23.20} &$3^5$ & \LMFDBE{15552c2} & Case~(c)  \\ \hline
    12& \LMFDBL{3.12.23.20} &$3^5$ & \LMFDBE{243b1} & Case~(d)  \\ \hline
12& \LMFDBL{3.12.23.14} &$3^5$ & \LMFDBE{243a1} & Case~(c)  \\ \hline
    12& \LMFDBL{3.12.23.14} &$3^5$ & \LMFDBE{15552bp2} & Case~(d)  \\ \hline
    \end{tabular}
    \caption{Examples of $E'/\Q_3$ with $e=12$ used in the proof of Theorem~\ref{thm:l3n-abinertia}. Here $(*)$ stands for \cite[Table 4]{symplectic}; this table can be verified with the code given in \cite[Table 3]{git}.}
    \label{tab:l3examples}
\end{table}
\end{proof}
\section{Proof of the main theorem}

\begin{proof}[Proof of Theorem~\ref{thm:MAIN}]
For $K=\R$ this follows from Theorem~\ref{thm:real}. 
Let $K = \Q_\ell$ with $\ell \neq p$.
When $E/K$ has potentially multiplicative reduction the result follows from Theorem~\ref{thm:multiplicative}, so assume $E/K$ has potentially good reduction. 
From Theorem~\ref{thm:abelian}, the problem reduces to elliptic curves $E/\Q_\ell$ with non-abelian $\rhobar_{E,p}(G_{\Q_\ell})$.
Now, the statement for $E/\Q_\ell$ having good reduction ($e=1$) follows from Theorem~\ref{thm:maingood} 
and the statement for $e=2$ follows from Corollary~\ref{cor:e2}. 
The claims for $e=3$ follow from Theorems~\ref{thm:e3l} and~\ref{thm:e3l3}, and the ones for $e=4$ follow from Theorems~\ref{thm:e4l} and~\ref{thm:e4l2}. Lastly, the case $e=6$ follows from Corollary~\ref{cor:e6}.
\end{proof}
 
\begin{proof}[Proof of Theorem~\ref{thm:l=p}]
Part~(i) follows from Theorem~\ref{thm:multiplicative}. Part~(ii) follows from Proposition~\ref{prop:good1} in the following way. Note that $\Delta_p=a_p^2-4p$, and suppose (1), (2) fail. Therefore, $p\equiv 3 \pmod 4$, and $-p\Delta_p$ is a square implies that $p\mid \Delta_p$ giving $p \mid a_p$. By the Hasse bound for $p\geq 7$ this is only possible if $a_p=0$. Then, the failure of (3) can only happen if $(2/p)=1$ (as $\Delta_p=-4p$), i.e., precisely when $p\equiv7 \pmod 8.$
\end{proof}
\begin{remark}
    Note that if $E/\Q_p$ has good supersingular reduction, i.e. $a_p=0$ when $p \geq 5$, then \cite[Proposition 12, (d)]{Serre2} tells us that $\rhobar_{E,p}(G_{\Q_p})$ is the non-split Cartan subgroup, hence it is non-abelian. In particular, arguments as in Theorem~\ref{thm:abelian} do not apply in the case not covered by Theorem~\ref{thm:l=p} part (ii).
\end{remark}
\begin{proof}[Proof of Corollary~\ref{cor:semistable}]
This is an immediate consequence of Theorems~\ref{thm:MAIN} and \ref{thm:l=p}.
\end{proof}
\section{Counterexamples to the Hasse principle}
\label{sec:Hasse}

In this section, we use our results to produce, for infinitely many~$p$, counterexamples to the Hasse principle for twists of $X(p)$ of the form $\Xp$. Assuming the Frey--Mazur conjecture, we further show there are counterexamples for infinitely many $E$ and~$p$.

\begin{lemma}\label{lem:irred}
Let $p \geq 3$ be a prime and $E/\Q$ be an elliptic curve.
Suppose that $\rhobar_{E,p}$ is irreducible.
Then all automorphisms of $E[p]$ are multiplication by a scalar.
Moreover, if $\psi : E\to E'$ is a  $\Q$-isogeny of degree $d$ and $\phi : E'[p] \to E[p]$ is a $G_\Q$-isomorphism, then $\phi$ is symplectic if and only if $(d/p)=1$.

\end{lemma}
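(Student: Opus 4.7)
The plan is to prove the two assertions in turn. Both rely on Schur's lemma; the main obstacle is showing that the $G_\Q$-endomorphism ring of $E[p]$ cannot be the quadratic field $\F_{p^2}$, for which I will use complex conjugation.

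For the first assertion, set $D := \End_{G_\Q}(E[p])$. Irreducibility of $\rhobar_{E,p}$ together with Schur's lemma shows that $D$ is a finite division algebra over $\F_p$, hence a field by Wedderburn's little theorem; since $D \hookrightarrow M_2(\F_p)$, we have $[D:\F_p]\in\{1,2\}$, so either $D=\F_p$ or $D=\F_{p^2}$. To exclude $D=\F_{p^2}$, note that this would force the image of $\rhobar_{E,p}$ into the centralizer of $\F_{p^2}$ in $M_2(\F_p)$, which is $\F_{p^2}$ itself, realized as a non-split Cartan subgroup of $\GL_2(\F_p)$; every non-scalar element of this Cartan has characteristic polynomial irreducible over $\F_p$. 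However, for complex conjugation $c\in G_\Q$ one has $\det\rhobar_{E,p}(c)=\chi_p(c)=-1$ and $\rhobar_{E,p}(c)^2=\id$, so the eigenvalues of $\rhobar_{E,p}(c)$ are the distinct elements $\pm 1\in\F_p$ (using $p\geq 3$), making $\rhobar_{E,p}(c)$ a non-scalar with split characteristic polynomial, a contradiction. Hence $D=\F_p$ and every $G_\Q$-automorphism of $E[p]$ is a scalar.

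For the second assertion, consider first the main case $\gcd(d,p)=1$, so that $\psi$ restricts to a $G_\Q$-isomorphism $\psi|_{E[p]}:E[p]\to E'[p]$. The composition $\phi\circ\psi|_{E[p]}$ is a $G_\Q$-automorphism of $E[p]$, hence by the first part equals $\lambda\cdot\id$ for some $\lambda\in\F_p^\times$. Using that $r(\cdot)$ is multiplicative under composition and that $r(\lambda\id)=\lambda^2$, one obtains
\[
r(\phi)\, r(\psi|_{E[p]}) \;=\; \lambda^2 \;\in\; (\F_p^\times)^2.
\]
By Lemma~\ref{lem:isogeny}, $r(\psi|_{E[p]})\equiv d \pmod{(\F_p^\times)^2}$, so $r(\phi)$ is a square in $\F_p^\times$ if and only if $(d/p)=1$.

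In the remaining case $p\mid d$, irreducibility forces $\ker\psi\cap E[p]=E[p]$: it is a non-trivial $G_\Q$-stable submodule (non-trivial by Cauchy's theorem applied to $\ker\psi$), so by irreducibility it is all of $E[p]$, and hence $\psi$ factors as $\psi'\circ[p]$ with $\deg\psi'=d/p^2$; iterating reduces to an isogeny of degree coprime to $p$, and the previous paragraph applies. The real work is concentrated in excluding $D=\F_{p^2}$ via complex conjugation; after that step, everything else is multiplicativity of $r$ together with the already-established Lemma~\ref{lem:isogeny}.
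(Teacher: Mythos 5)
Your proof is correct and follows the same overall strategy as the paper: first show that every $G_\Q$-automorphism of $E[p]$ is a scalar, then compose $\phi$ with $\psi|_{E[p]}$ and invoke Lemma~\ref{lem:isogeny}. For the first assertion the paper argues that odd plus irreducible implies absolutely irreducible, hence non-abelian image, and then cites \cite[Lemma~7]{symplectic} for the centralizer being scalar; your Schur--Wedderburn argument, with complex conjugation excluding $D=\F_{p^2}$, is a self-contained unwinding of exactly the same idea (oddness is the decisive input in both versions), which is arguably preferable since it avoids the external references. One place where you are more careful than the paper: the paper's proof simply asserts that $\gcd(d,p)=1$ follows from irreducibility, which is only true for cyclic isogenies (multiplication by $p$ is a counterexample), whereas your factorization of $\psi$ through $[p]$ treats the case $p\mid d$ honestly. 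Note, however, that what your reduction then proves is that $\phi$ is symplectic if and only if $(d/p^{2k}\,/\,p)=1$ after stripping off the factors of $p^2$; since the Legendre symbol $(d/p)$ is $0$ when $p\mid d$, the lemma as literally stated should really be read with $d$ prime to $p$ (which is how it is used in the paper), so your final sentence slightly oversells what the reduction delivers.
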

\begin{proof}
The mod~$p$ representation $\rhobar_{E,p} : G_\Q \to \GL_2(\F_p)$ is odd and
irreducible, hence it is absolutely irreducible.
Thus $\rhobar_{E,p}(G_\Q)$ is non-abelian by \cite[VII.47, Proposition~19]{BourbAlgII}.
Let $\phi : E[p] \simeq E[p]$ be a $G_\Q$-automorphism. We choose a basis for $E[p]$ and let $M$ be the matrix representing~$\phi$ in that basis. We have that
$M \cdot \rhobar_{E,p}(\sigma) = \rhobar_{E,p}(\sigma)\cdot M$
for all $\sigma \in G_\Q$, hence $M$ centralizes $\rhobar_{E,p}(G_\Q)$.
Therefore $M = \lambda \cdot \id$ by \cite[Lemma~7]{symplectic}, proving the first statement.

Let $\psi : E\to E'$ be an isogeny of degree $d$ and $\phi : E'[p] \to E[p]$ is a $G_\Q$-isomorphism. We have $(d,p)=1$ since $\rhobar_{E,p}$ is irreducible.
Thus $\phi \circ \psi|_{E[p]}$ is a $G_\Q$-automorphism of $E[p]$ and thus $\phi = \lambda \circ (\psi|_{E[p]})^{-1}$ for some $\lambda \in \F_p^*$ by the first statement. Hence the symplectic type of $\phi$ is equal to that of $\psi|_{E[p]}$ which is symplectic
if and only if $(d/p)=1$ by Lemma~\ref{lem:isogeny}. \end{proof}

\begin{proposition} \label{prop:conductor}
Let $p > 7$ be a prime. Let $E/\Q$ and $E'/\Q$ be elliptic curves having integral $j$-invariants and isomorphic $p$-torsion modules. 
Then $E$ and~$E'$ have the same conductor.
\end{proposition}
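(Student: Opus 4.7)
Since $j(E), j(E') \in \Z$, both curves have potentially good reduction at every prime~$\ell$. Hence, to prove $N_E = N_{E'}$, it suffices to verify that the conductor exponents $v_\ell(N_E)$ and $v_\ell(N_{E'})$ coincide for every prime~$\ell$. I would treat the primes $\ell \neq p$ and $\ell = p$ separately.

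For $\ell \neq p$, the conductor exponent of~$E$ at~$\ell$ equals the Artin conductor of the $G_{\Q_\ell}$-representation $T_p(E)$, which depends only on the restriction to the inertia subgroup~$I_\ell$ together with its filtration by higher ramification groups. Because $E/\Q_\ell$ has potentially good reduction, the image of $I_\ell$ in $\GL_2(\Z_p)$ factors through the finite inertial group~$\Phi$, whose order lies in $\{1,2,3,4,6,8,12,24\}$ by the classification recalled in Section~\ref{s:notation}. Since $p > 7$ is coprime to every such order, the reduction homomorphism $\GL_2(\Z_p) \twoheadrightarrow \GL_2(\F_p)$ is injective on the image of~$I_\ell$, as its kernel is pro-$p$. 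Consequently the $I_\ell$-module structure of $E[p]$ faithfully captures that of $T_p(E)$, together with its higher ramification filtration, and the Artin conductors computed from either coincide. The hypothesis $E[p] \simeq E'[p]$ as $G_\Q$-modules (hence as $G_{\Q_\ell}$-modules) then yields $v_\ell(N_E) = v_\ell(N_{E'})$.

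For $\ell = p$, the mod-$p$ reduction trick above fails because the image of $I_p$ in $\GL_2(\Z_p)$ is no longer finite of order coprime to~$p$. Instead, I would note that for $p \geq 5$ the inertial group at~$p$ is necessarily tame, so $v_p(N_E), v_p(N_{E'}) \in \{0, 2\}$, with~$0$ corresponding precisely to good reduction, and reduce the problem to showing that $E$ has good reduction at~$p$ if and only if $E'$ does. If $E$ has good reduction at~$p$, then $E[p]$ is the generic fibre of the finite flat $\Z_p$-group scheme $\mathcal{E}[p]$, where $\mathcal{E}$ is the N\'eron model. By Raynaud's uniqueness theorem for finite flat models over $\Z_p$ (valid since $p > 2$), the isomorphic $G_{\Q_p}$-module $E'[p]$ inherits a unique finite flat structure, so $\rhobar_{E',p}|_{G_{\Q_p}}$ is finite flat. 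A finite-flatness characterisation of good reduction due to Fontaine (applicable since $p \geq 5$) then forces $E'$ to have good reduction at~$p$. The converse is symmetric, and when both curves have additive reduction at~$p$ the tameness of~$\Phi$ immediately gives $v_p(N_E) = v_p(N_{E'}) = 2$. The main obstacle is precisely this $\ell = p$ case, which requires the external input of Raynaud's uniqueness and of a finite-flatness criterion for good reduction of an elliptic curve.
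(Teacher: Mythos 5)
Your proof is correct in substance and follows the same skeleton as the paper's (treat $\ell\neq p$ and $\ell=p$ separately, and at $p$ reduce to transferring good reduction across the isomorphism $E[p]\simeq E'[p]$), but you source the two local statements differently. The paper simply records that $N(\rhobar_{E,p})=N(\rhobar_{E',p})$ and $k(\rhobar_{E,p})=k(\rhobar_{E',p})$ and quotes Kraus's computation of the Serre level and weight of $\rhobar_{E,p}$: the level recovers the prime-to-$p$ conductor (no potentially multiplicative primes), and for $p>7$ the weight equals $2$ exactly when the reduction at $p$ is good or multiplicative. Your argument at $\ell\neq p$ is a clean direct proof of the level statement: since $\Phi$ has order in $\{1,\dots,24\}$ with only $2$ and $3$ as prime factors and the kernel of $\GL_2(\Z_p)\to\GL_2(\F_p)$ is pro-$p$, the mod-$p$ module determines the inertia action on $T_p(E)$ together with its ramification filtration, hence the Artin conductor. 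At $\ell=p$ your finite-flatness criterion is exactly the weight-$2$ condition in Edixhoven's formulation, so the two proofs are mathematically equivalent there; what each buys is a trade-off between citing Kraus's explicit tables and citing integral $p$-adic Hodge theory. Two small imprecisions you should fix: the attribution to Fontaine is off, since Fontaine's good-reduction theorem requires the whole $p$-divisible group $E'[p^\infty]$ to prolong, whereas you only know $E'[p]$ is finite flat --- the statement you actually need (finite flat plus potentially good reduction implies good reduction) comes from Raynaud's bounds on tame inertia exponents of finite flat group schemes, equivalently the Serre-weight computation; and your parenthetical ``applicable since $p\geq 5$'' is suspect --- the paper restricts to $p>7$ precisely because Kraus's weight criterion admits exceptions for $p\in\{5,7\}$, though since the proposition assumes $p>7$ this does not create a gap. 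Also, Raynaud's uniqueness of finite flat models is not needed where you invoke it: to see that $E'[p]$ is finite flat you only need the existence of a prolongation of the isomorphic module $E[p]$, namely the kernel of $p$ in the N\'eron model of $E$.
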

\begin{proof}

From $\rhobar_{E,p} \simeq \rhobar_{E'p}$, we have equality of Serre levels and Serre weights
\[
 N(\rhobar_{E,p}) = N(\rhobar_{E',p}),  \qquad k(\rhobar_{E,p}) = k(\rhobar_{E',p}).
\]
Since $j(E)$ and $j(E')$ belong to $\Z$, we know that neither $E$ nor $E'$ has any primes of potentially multiplicative reduction. Since $p>3$, it follows from \cite[page~28]{Kraus1997} that $N(\rhobar_{E,p})$ is equal to~$N_E$ away from~$p$ and similarly for $E'$. We are left to show that
$v_p(N_E) = v_p(N_{E'})$.

If $E$ has good reduction at~$p$, then $N_E = N(\rhobar_{E,p})$ and $k(\rhobar_{E,p}) = 2$. As $p > 7$, it follows from \cite[Th\'eor\`eme 1]{Kraus1997} and the discussion preceding it that  $k(\rhobar_{E',p}) = 2$ requires $E'$ to have either good or multiplicative reduction. We know $E'$ has no multiplicative primes; hence it has good reduction at~$p$. Thus $v_p(N_E) = v_p(N_{E'})=0$.

If $E$ has additive reduction, then $v_p(N_E)=2$. Moreover, as $p > 7$, from \cite[Th\'eor\`eme 1]{Kraus1997}, we have $k(\rhobar_{E,p}) > 2$.
Hence $k(\rhobar_{E',p}) > 2$ and $E'$ do not have good reduction; since it also does not have multiplicative reduction, we conclude $v_p(N_{E'})=2$.
\end{proof}

\begin{theorem} \label{thm:HasseSplit}
Let $K=\Q(\sqrt{D})$ where $D \in \{-11,-19,-43,-67,-163 \}$. Let also $E/\Q$ be an elliptic curve with CM by~$K$. If $p > 7$ is a prime such that $p \equiv 5 \pmod{8}$ and $(D / p) = 1$, then $\Xp$ is a counterexample to the Hasse principle.
\end{theorem}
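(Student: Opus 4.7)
The plan splits into establishing local solvability at every place and then ruling out rational points using the CM structure of~$E$.

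For local solvability, by Lemma~\ref{lem:qtwists} I may replace $E$ with the canonical CM elliptic curve of minimal conductor $|D|^2$, since local solvability is a $\Q$-quadratic twist invariant. Theorem~\ref{thm:real} gives $\Xp(\R) \neq \emptyset$. For every prime $\ell \neq |D|$, including $\ell=p$ because $(D/p)=1$ forces $p \nmid D$, the curve $E$ has good reduction; since $p \equiv 5 \pmod 8$ implies $p \equiv 1 \pmod 4$, Theorem~\ref{thm:mainConditional}(1) gives $\Xp(\Q_\ell) \neq \emptyset$. At $\ell = |D|$, a direct computation with the explicit minimal Weierstrass model of each of the five canonical CM curves yields $(v_\ell(c_4), v_\ell(c_6), v_\ell(\Delta_m)) = (1,2,3)$, hence Kodaira type III and $e(E/\Q_\ell) = 4$. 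Since $|D| \equiv 3 \pmod 4$ for each $D \in \{-11,-19,-43,-67,-163\}$, Theorem~\ref{thm:mainConditional}(3) applies, and $(2/p) = -1$ (from $p \equiv 5 \pmod 8$) yields $\Xp(\Q_\ell) \neq \emptyset$.

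For global non-solvability, suppose $(E', \phi) \in \Xp(\Q)$ toward a contradiction. Since $(D/p)=1$, $p$ splits in $K$ as $p = \pi\bar\pi$, and $E[p]|_{G_K} = E[\pi] \oplus E[\bar\pi]$ with $\Gal(K/\Q)$ swapping the summands. Thus $\rhobar_{E,p} \simeq \Ind_{G_K}^{G_\Q} \psi$ for a character $\psi : G_K \to \F_p^\times$ with $\psi \neq \psi^c$; this representation is irreducible and has image contained in the normalizer of a split Cartan subgroup of $\GL_2(\F_p)$. Hence $E'$ yields a non-cuspidal $\Q$-rational point on the modular curve $X_{\mathrm{sp}}^+(p)$.

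By the theorem of Bilu--Parent--Rebolledo, together with the quadratic Chabauty treatment of $p=13$ due to Balakrishnan--Dogra--M\"uller--Tuitman--Vonk, for every prime $p \geq 11$ the only non-cuspidal $\Q$-rational points of $X_{\mathrm{sp}}^+(p)$ are CM points. Since $p > 7$ and $p \equiv 5 \pmod 8$ force $p \geq 13$, the curve $E'$ must be CM. The dihedral induction structure identifies its CM field as $K$ and its CM order as $\cO_K$; since $h(K)=1$, $E'$ is a $\Q$-quadratic twist of $E$. The stabilizer of $\rhobar_{E,p}$ under tensoring by quadratic characters of $G_\Q$ is $\{1, \chi_K\}$, where $\chi_K = (D/\cdot)$ is the Kronecker character, so $E' \in \{E, E^{(D)}\}$. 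Since $E^{(D)}$ is $\Q$-isogenous to $E$ via the $|D|$-isogeny $E \to E/E[\pi]$, the curve $E'$ is $\Q$-isogenous to $E$ by an isogeny of degree $1$ or $|D|$. By Lemma~\ref{lem:isogeny} this induces a symplectic isomorphism $E[p] \to E'[p]$, since $(1/p) = 1$ and $(|D|/p) = 1$ (the latter from $(D/p) = (-1/p)(|D|/p)$ combined with $(-1/p) = 1$). By Lemma~\ref{lem:irred}, every $G_\Q$-isomorphism $E[p] \to E'[p]$ differs from this one by a scalar and hence has the same symplectic type, contradicting $\phi$ being anti-symplectic. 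Hence $\Xp(\Q) = \emptyset$.

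The main obstacle is the step ruling out non-CM $E'$, which relies on the deep classification of $\Q$-rational points on $X_{\mathrm{sp}}^+(p)$ for $p \geq 11$ (Bilu--Parent--Rebolledo, with Balakrishnan--Dogra--M\"uller--Tuitman--Vonk handling $p=13$). The other steps---the Kodaira type computations at $\ell = |D|$ and the twist analysis identifying $E'$ within the $\Q$-isogeny class of $E$---are routine but require careful case-by-case verification for the five CM curves.
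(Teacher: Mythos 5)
Your proposal follows essentially the same strategy as the paper: local solvability at good primes via Theorem~\ref{thm:mainConditional}(1) (using $p\equiv 1\pmod 4$, where the paper instead uses $p\nmid\#\rhobar_{E,p}(\Frob_\ell)$ --- both work), local solvability at $\ell=|D|$ via $e=4$, $\ell\equiv 3\pmod 4$ and $(2/p)=-1$, and global emptiness via the split Cartan classification forcing $E'$ to be CM, followed by an isogeny/symplectic-type contradiction through Lemmas~\ref{lem:isogeny} and~\ref{lem:irred}. The one place you genuinely diverge is how you pin down $E'$ once it is known to be CM: you argue through the induced (dihedral) structure of $\rhobar_{E,p}$ that the CM field of $E'$ is $K$ and its CM order is $\cO_K$, then classify the admissible quadratic twists. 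The paper instead observes that $E'$, being CM, has integral $j$-invariant, applies Proposition~\ref{prop:conductor} to get $N_{E'}=N_E=q^2$, and uses the fact that the isogeny class of $E$ is the unique one of that conductor containing a CM curve. Your route is workable but you assert rather than prove its two key claims: identifying the CM field as $K$ from the induction structure requires knowing the projective image is dihedral of order strictly greater than $4$ (otherwise the representation is induced from three quadratic fields), and identifying the order as the maximal one requires the classification of class-number-one orders (for these five $D$ only the maximal order has class number one). The paper's conductor argument sidesteps both points.

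Two small gaps to repair. First, you never rule out rational cusps: a point of $\Xp(\Q)$ need not be of the form $(E',\phi)$; since $p>7$, Proposition~\ref{prop:cuspsQ} shows the cusps are not defined over $\Q$, and this step should be stated. Second, your claim that all five canonical curves have $(v_\ell(c_4),v_\ell(c_6),v_\ell(\Delta_m))=(1,2,3)$ at $\ell=|D|$ (Kodaira type III) is stronger than needed and not verified; for $e=4$ one could also have $v_\ell(\Delta_m)=9$ (type III*). Only $e=4$ matters for invoking Theorem~\ref{thm:mainConditional}(3), so it is safer to state just that, as the paper does.
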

\begin{proof}
For each fixed field $K$ in the statement, all elliptic curves with CM by~$K$ have the same
$j$-invariant $\neq 0, 1728$, therefore the curve $E/\Q$ is uniquely determined up to a quadratic twist. Note that, by Lemma~\ref{lem:qtwists}, the curve $\Xp$ is a counterexample to the Hasse principle if and only if $X_{E^u}^-(p)$ is a counterexample to the Hasse principle for any quadratic twist~$E^u /\Q$ of $E$. Thus,
we can assume that $E$ has minimal conductor among its twists. That is, for each $D = -11,-19,-43,-67$ or $-163$, we take $E$ to be the elliptic curve with Cremona label
$121b1$, $361a2$, $1849a2$, $4489a2$ or $26569a2$, respectively.

Note that $q := -D$ is a prime satisfying $q \equiv 3 \pmod{4}$.
For each $E$ in the above list, a quick consultation of LMFDB~\cite{LMFDB} shows the following facts: 
\begin{enumerate}
    \item[(i)] $E$ has conductor $N_E=q^2$ and potentially good reduction at $q$ with semistability defect $e = 4$;\label{CM1}
        \item[(ii)] the isogeny class of $E$ is the unique one of conductor $q^2$ containing a CM curve; \label{CM3}
    \item[(iii)] the isogeny class of $E$ contains precisely two elliptic curves i.e. $E$ and $E^D$ related by an isogeny of degree $q$. \label{CM2}

\end{enumerate}

From \cite[\S 4.3.1, Th\'eor\`eme 7]{Halberstadt} we know that the image of $\rhobar_{E,p}$ is irreducible and contained in the normalizer of a Cartan subgroup of $\GL_2(\F_p)$. Moreover, it is the split Cartan if $p$ splits in $K$ and the non-split Cartan if $p$ is inert in $K$.
In particular, $p \nmid \#\rhobar_{E,p}(G_\Q)$.

The fact that $X_{E}^-(p)(\R) \neq \emptyset$ and $\Xp(\Q_\ell) \neq \emptyset$ for all primes~$\ell$ follows from Theorems~\ref{thm:MAIN} and~\ref{thm:l=p}. Note that for all good primes~$\ell \nmid N_E = q^2$, we use the fact that $p \nmid \#\rhobar_{E,p}(\Frob_\ell)$ for CM curves. For $\ell=q$, by~(i), we are in case $e=4$ and the hypothesis $p \equiv 5 \pmod{8}$ assures that $(2/p)=-1$.

To complete the proof, we will show that $\Xp(\Q) = \emptyset$. We start by noting that Proposition~\ref{prop:cuspsQ} assures that the cusps of $\Xp$ are not defined over $\Q$. Thus, we are left with showing that $Y^-_E(p)(\Q) = \emptyset$. Assume there is a rational point $(E', \phi) \in Y^-_E(p)(\Q)$, where $E'/\Q$ is an elliptic curve and 
$\phi : E[p] \to E'[p]$ is an anti-symplectic $G_\Q$-isomorphism.

The assumption $(D/p) = 1$ implies that the image of $\rhobar_{E,p}$ is contained in the normalizer of a split Cartan subgroup (since $p$ splits in~$K$), hence the same holds for
$\rhobar_{E',p}$. By \cite[Theorem 1.2]{Xsplit13}, it follows that $E'$ has CM. In particular, both $E$ and $E'$ have integral $j$-invariants and so
$N_{E'}= N_{E}$ by Proposition~\ref{prop:conductor}. Thus $E$ and $E'$ are isogenous by~(ii) above.

By~(iii) one gets that an isogeny $\psi : E' \to E$ is either an isomorphism or of degree~$q = -D$. By quadratic reciprocity and our assumptions $(q/p)=(-D/p)=(D/p)=1$. Hence Lemma~\ref{lem:irred} implies that $\phi$ is symplectic, a contradiction.
\end{proof}
\begin{remark}\label{rem:CM} 
The previous theorem is optimal in the following sense. The CM discriminants~$D$ 
not present in the statement of Theorem \ref{thm:HasseSplit} do not give rise to counterexamples to the Hasse principle. They correspond to the following CM curves (given by their Cremona labels), up to a quadratic twist:
$27a1, \; 27a2,\;  32a2,\; 32a3,\;36a4,\; 49a1, \; 49a4, \;  256a1$.

For example, let $E$ be one of the elliptic curves $27a1$, $27a2$. One can check that $E$ admits a $3$-isogeny and has semistability defect $e=12$ at~$\ell=3$. For $\Xp$ to be a counterexample to the Hasse principle, we must have $\Xp(\Q)=\emptyset$. Hence, Lemma~\ref{lem:isogeny} imposes that $(3/p)=1$. However, Theorem~\ref{thm:MAIN} implies that $\Xp(\Q_3) = \emptyset$ and, in particular, we don't have everywhere local points. 
Similar arguments show that all curves in the above list have isogenies preventing them from being counterexamples to the Hasse principle.
\end{remark}

The assumption $(D/p) = 1$ in Theorem~\ref{thm:HasseSplit} ensures $\rhobar_{E,p}$ lies in the normalizer of a split Cartan subgroup of $\GL_2(\F_p)$; this is necessary due to the absence of an analogous result to \cite[Theorem 1.2]{Xsplit13} for the non-split case. Assuming the following consequence of Serre's uniformity conjecture allows for the same proof in the non-split case.

\begin{conj}\label{conj:nonsplit}
Let $p$ be a prime.
Then there exists an elliptic curve $E/\Q$ without CM such that
the image of $\rhobar_{E,p}$ is contained in the normalizer of a non-split Cartan subgroup
of $\GL_2(\F_p)$ if and only if $p \leq 11$.
\end{conj}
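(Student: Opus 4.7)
The conjecture reformulates naturally as a statement about rational points on the non-split Cartan modular curve $X_{\text{ns}}^+(p)$: up to quadratic twist, its non-CM, non-cuspidal $\Q$-points correspond precisely to the non-CM elliptic curves $E/\Q$ whose mod $p$ Galois image lies in the normalizer of a non-split Cartan subgroup of $\GL_2(\F_p)$. The plan is to handle the two implications separately.

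For the easy direction, I would exhibit explicit non-CM elliptic curves for each $p \in \{2,3,5,7,11\}$, for instance by scanning curves of small conductor in the LMFDB and computing the image of $\rhobar_{E,p}$ via Frobenius traces at a handful of primes of good reduction; if all these traces satisfy the congruences characterizing the normalizer of a non-split Cartan (notably $a_\ell \equiv 0 \pmod{p}$ when $\ell$ is inert in the associated quadratic structure, together with the correct conditions in the split case), the image is determined up to a finite check using Sutherland's algorithm. For $p\leq 7$ the modular curves $X_{\text{ns}}^+(p)$ are rational, so non-CM points abound; for $p=11$ the curve has positive genus, but explicit non-CM rational points are known from the classical literature on non-split Cartan modular curves.

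For the hard direction, one must show that $X_{\text{ns}}^+(p)(\Q)$ consists only of cusps and CM points for every $p\geq 13$. This is precisely Serre's uniformity conjecture on the non-split Cartan side and is unconditionally open. I would first attack $p=13$ using the quadratic Chabauty machinery together with the explicit geometry of $X_{\text{ns}}^+(13)$, following the Balakrishnan--Dogra--M\"uller--Tuitman--Vonk strategy that succeeded for the split Cartan at $p=13$. For larger primes, I would hope to combine a Runge-type argument in the spirit of Bilu--Parent (which handled the split Cartan case for all sufficiently large~$p$) with a careful control of integral points on $X_{\text{ns}}^+(p)$, possibly coupled with Mazur's formal immersion technique applied to a suitable quotient.

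The main obstacle is clearly the reverse direction for $p \geq 13$: no uniform method is currently available on the non-split side, Runge's method as implemented by Bilu--Parent does not transfer directly, and even handling individual moderate primes requires a substantial geometric and analytic effort. Consequently, a complete proof of the conjecture lies beyond present techniques, and the best I could realistically offer is a case-by-case treatment for the first few primes together with a conditional statement assuming Serre's uniformity.
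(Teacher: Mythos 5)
There is nothing in the paper to compare your attempt against: the statement is labelled as a \emph{Conjecture} (Conjecture~\ref{conj:nonsplit}) and the authors give no proof of it. They explicitly introduce it as a consequence of Serre's uniformity conjecture and use it only as a hypothesis in Theorem~\ref{thm:HasseNonSplit}. Your assessment is therefore the right one: the ``only if'' direction is precisely the non-split half of Serre's uniformity conjecture and is open, so no complete proof is expected here, and you are correct to say so rather than to fabricate an argument.

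A few refinements to your sketch. The ``if'' direction for $p\le 11$ is genuinely provable and essentially classical: $X_{\mathrm{ns}}^{+}(p)$ has genus $0$ with a rational point for $p\le 7$, and $X_{\mathrm{ns}}^{+}(11)$ is an elliptic curve of rank $1$, so in all cases one has infinitely many rational points, of which only finitely many are CM or cuspidal; exhibiting explicit non-CM examples as you propose is routine. On the hard direction, note that $p=13$ has in fact already been settled in the literature: the quadratic Chabauty method of the cited Balakrishnan--Dogra--M\"uller--Tuitman--Vonk paper was subsequently applied to the non-split Cartan curve of level $13$ (the ``cursed curve''), so your proposed first step is already done. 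The genuine obstruction is $p\ge 17$: as you say, the Bilu--Parent Runge argument does not transfer to the non-split side because the relevant cusps are not rational, and no uniform method is currently available. Your proposal is an accurate description of the state of the art and matches the paper's own stance of treating the statement as a conjecture.
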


\begin{theorem} \label{thm:HasseNonSplit} Assume Conjecture~\ref{conj:nonsplit}.
Let $D$, $K$ and~$E$ be as in Theorem~\ref{thm:HasseSplit}.
If $p \geq 11$ is a prime such that $p \equiv 3 \pmod{8}$ and $(D / p) = -1$, then $\Xp$ is a counterexample to the Hasse principle.
\end{theorem}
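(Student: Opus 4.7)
The proof follows very closely that of Theorem~\ref{thm:HasseSplit}, with the role of ``split'' replaced by ``non-split'' and Conjecture~\ref{conj:nonsplit} playing the role that \cite[Theorem 1.2]{Xsplit13} played there. By Lemma~\ref{lem:qtwists} I would reduce to $E$ having minimal conductor among its quadratic twists, so that $E$ is one of the specific Cremona curves listed in the proof of Theorem~\ref{thm:HasseSplit}, with $N_E = q^2$ for $q = -D$, potentially good reduction at $q$ with $e = 4$, and the properties (i)--(iii) recorded there. The hypothesis $(D/p) = -1$ means $p$ is inert in $K$, so by \cite[\S 4.3.1, Th\'eor\`eme 7]{Halberstadt} the image of $\rhobar_{E,p}$ is contained in the normalizer of a non-split Cartan subgroup of $\GL_2(\F_p)$. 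This normalizer has order $2(p^2 - 1)$, coprime to $p$; hence $p \nmid \#\rhobar_{E,p}(G_\Q)$, and $\rhobar_{E,p}$ is irreducible.

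Local solubility is established exactly as in Theorem~\ref{thm:HasseSplit}. Theorem~\ref{thm:real} gives $\Xp(\R) \neq \emptyset$. For primes $\ell \nmid N_E$, since $p \nmid \#\rhobar_{E,p}(\Frob_\ell)$, Theorem~\ref{thm:mainConditional}~(1) yields $\Xp(\Q_\ell) \neq \emptyset$. At $\ell = q$, each $q \in \{11, 19, 43, 67, 163\}$ satisfies $q \equiv 3 \pmod 4$, so Theorem~\ref{thm:mainConditional}~(3) applies and gives $\Xp(\Q_q) \neq \emptyset$ provided $(q/p) = -1$ or $(2/p) = -1$; the assumption $p \equiv 3 \pmod 8$ ensures $(2/p) = -1$.

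For the global step, Proposition~\ref{prop:cuspsQ} shows no cusp of $\Xp$ is $\Q$-rational, so it suffices to rule out points in $Y_E^-(p)(\Q)$. A hypothetical $(E', \phi) \in Y_E^-(p)(\Q)$ yields an isomorphism $E[p] \simeq E'[p]$ of $G_\Q$-modules, so $\rhobar_{E',p}$ also has image in the normalizer of a non-split Cartan. Invoking Conjecture~\ref{conj:nonsplit}, the curve $E'$ must have CM (this is immediate for $p \geq 13$; at $p = 11$ one should additionally rule out the sporadic non-CM examples that the conjecture permits). Proposition~\ref{prop:conductor} then forces $N_{E'} = q^2$; property~(ii) places $E'$ in the isogeny class of $E$; property~(iii) constrains the isogeny degree to $1$ or $q$. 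Since $p \equiv 3 \pmod 4$ we have $(-1/p) = -1$, hence
$(q/p) = (-D/p) = (-1/p)(D/p) = (-1)(-1) = 1$,
so Lemma~\ref{lem:irred} forces $\phi$ to be symplectic, contradicting the anti-symplectic assumption.

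The principal obstacle is exactly the CM conclusion for $E'$ in the global step: no unconditional analogue of \cite[Theorem 1.2]{Xsplit13} is available in the non-split Cartan setting, which is why the theorem must be conditional on Conjecture~\ref{conj:nonsplit}. A secondary subtlety is the boundary case $p = 11$, since the conjecture itself still permits sporadic non-CM examples there; however, the additional constraints imposed by $(D/p) = -1$, $p \equiv 3 \pmod 8$, and the conductor $q^2$ should leave only a short finite list of curves to eliminate by direct inspection.
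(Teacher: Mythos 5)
Your proposal is correct and is precisely the argument the paper intends: the paper gives no separate proof of Theorem~\ref{thm:HasseNonSplit}, stating only that Conjecture~\ref{conj:nonsplit} ``allows for the same proof'' as Theorem~\ref{thm:HasseSplit}, and your substitutions are all right --- non-split Cartan because $p$ is inert in $K$, $(2/p)=-1$ from $p\equiv 3\pmod 8$ for the local point at $\ell=q$, and $(q/p)=(-1/p)(D/p)=1$ forcing any isogeny-induced isomorphism to be symplectic. Your aside on $p=11$ flags a genuine delicacy that the paper itself glosses over (the conjecture only forces $E'$ to have CM for $p\geq 13$, and since $X_{ns}^+(11)$ has infinitely many non-CM rational points the residual check at $p=11$ is not merely a ``short finite list''), but this boundary issue is inherited from the paper's statement and does not affect the argument for $p\geq 13$.
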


We recall the following well-known conjecture (see \cite{FreyMazur} for its original, weaker formulation).
\begin{conj}[Frey--Mazur] \label{conj:FM}
Any two elliptic curves $E/\Q$ and $E'/\Q$ with isomorphic $p$-torsion modules for some $p > 17$ are $\Q$-isogenous.
\end{conj}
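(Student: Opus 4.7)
The Frey--Mazur conjecture is a well-known open problem, so the sketch below is necessarily a program rather than an actual proof. The natural framework is the \emph{Frey--Mazur surface} $Z(p)$, the quotient of $X(p)\times X(p)$ by the diagonal action of $\GL_2(\F_p)/\{\pm 1\}$, whose $\Q$-points parametrize equivalence classes of pairs $(E,E')$ of elliptic curves over~$\Q$ equipped with an isomorphism $E[p]\simeq E'[p]$ of $G_\Q$-modules up to scalars. The conjecture is equivalent to the assertion that, for $p>17$, every $\Q$-point of $Z(p)$ lies on the \emph{isogeny locus} $Z_{\mathrm{isog}}(p)$ together with the finitely many cuspidal and CM-like strata. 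My first step would therefore be to give a clean modular description of $Z(p)$ and of $Z_{\mathrm{isog}}(p)$ inside it, using Lemma~\ref{lem:isogeny} to classify the symplectic structure on the $p$-torsion induced by isogenies of degree coprime to~$p$.

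The plan would then split into a geometric part and an arithmetic part. For large primes, I would try to prove that $Z(p)$ is of general type for every $p\geq p_0$ with explicit $p_0$; combined with the Bombieri--Lang conjecture, this would force $Z(p)(\Q)$ to lie on a proper Zariski closed subvariety, and the remaining task would be to identify this subvariety with $Z_{\mathrm{isog}}(p)$ plus the enumerable exceptional strata. A complementary fibred approach via the projection $Z(p)\to X(1)$ sending $(E,E')\mapsto j(E)$, whose fibre is the twist $X^+_E(p)$ of genus $g=1+(p^2-1)(p-6)/24\geq 3$ for $p\geq 7$, gives finiteness of each fibre by Faltings's theorem but does not by itself supply the uniformity in~$E$ that the conjecture demands.

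For the remaining range $17<p<p_0$, the plan would be explicit. Using the Serre conductor/weight machinery behind Proposition~\ref{prop:conductor}, together with Ribet-style level lowering, one bounds the conductor of any putative second curve $E'$ with $E[p]\simeq E'[p]$ in terms of that of~$E$; one then eliminates the finitely many candidate isogeny classes by Chabauty--Coleman, or quadratic Chabauty, applied to $X^+_E(p)$ for each relevant~$E$ in the resulting finite list. The main obstacle, and the reason the conjecture remains open, is twofold: the geometric half reduces to Bombieri--Lang, itself a deep open conjecture, while the explicit half must control rational points on modular curves whose genus grows as $p^3$, which is computationally forbidding already for $p=19$. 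Moreover, the bound $p>17$ is believed to be sharp (because of the known sporadic coincidences at $p=17$), so any argument must produce a mechanism explaining why such congruences cannot persist for larger~$p$; a genuinely unconditional proof would almost certainly require a new input, for instance a uniformity theorem for mod~$p$ Galois representations beyond what is currently known from the work of Mazur and Serre.
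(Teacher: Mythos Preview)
The statement is labelled \emph{Conjecture} in the paper, and the paper does not attempt to prove it: the Frey--Mazur conjecture is simply recalled as a hypothesis and then assumed in Corollary~\ref{cor:Frey-MazurSimple}, Corollary~\ref{cor:bound}, and Theorem~\ref{thm:Frey-Mazur}. There is therefore no ``paper's own proof'' to compare against.

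You correctly identify this as a well-known open problem and are explicit that what you are offering is a program, not a proof. Your outline is a reasonable summary of the standard heuristic picture (the Frey--Mazur surface $Z(p)$, general type plus Bombieri--Lang for large $p$, explicit Chabauty-type methods for small $p$), and you are honest about the two principal obstructions: the reliance on Bombieri--Lang, and the computational infeasibility of handling curves of genus growing like $p^3$. Since the paper makes no claim to prove the conjecture, there is no gap to flag in your proposal relative to the paper; the only remark is that no proof was expected or required here.
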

Using Theorem~\ref{thm:MAIN}, we can conjecturally create counterexamples to the Hasse principle for infinite choices of $E$ and $p$. The following theorem provides a non-exhaustive list of such counterexamples.
\begin{theorem} \label{thm:Frey-Mazur}
    Let $p>17$ be a prime. Let $E/\Q$ be an elliptic curve semistable at $p$. Assume the Frey--Mazur Conjecture holds for $E$. Further, assume that any of the following cases hold, where all the unspecified bad reduction primes $\ell \neq p$ of~$E$ do not belong to Table~\ref{table:main}. 
   \begin{enumerate}
    \item $p \equiv 1 \pmod{4}$ and all $\Q$-isogenies of $E$ have degree $d$ satisfying $(d/p)=1$;
    \item $p \equiv 5 \pmod{8}$, $E[2](\Q) = \emptyset$ and $E/\Q_2$ has potentially good reduction with $e \in \{8,24\}$ or $e=4$ and $\tilde{c}_4\equiv 5\tilde{\Delta} \pmod{8}$;
    \item $p \equiv 5 \pmod{12}$, $E$ has no $2$-and $3$-isogenies and $E/\Q_3$ has potentially good reduction with
    $e =12$ or $e=3$ and~$\tilde{\Delta} \equiv 2 \pmod{3}$;
    \item $p \equiv 5 \pmod{24}$, $E$ has no $\Q$-isogenies and for all primes $\ell \mid N_E$ all cases not in Table~\ref{table:main} are allowed.
   \end{enumerate}
   Then $X_E^-(p)$ is a counterexample to the Hasse principle.
\end{theorem}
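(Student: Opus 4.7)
The plan is to establish local solubility everywhere and the absence of rational points. For local points, the archimedean place is handled by Theorem~\ref{thm:real}. Each of the four cases forces $p \equiv 1 \pmod{4}$: directly in case~(1), and because $p \equiv 5 \pmod{8}$, $5 \pmod{12}$, or $5 \pmod{24}$ imply $p \equiv 1 \pmod{4}$ in the other cases. Combined with the semistability of~$E$ at~$p$, this gives $\Xp(\Q_p) \neq \emptyset$ via Theorem~\ref{thm:mainNotConditional}~(1) or Theorem~\ref{thm:mainConditional}~(1); the latter also handles all primes $\ell \neq p$ of good reduction. For bad primes, the unspecified ones are covered by the hypothesis that they lie in Theorem~\ref{thm:mainNotConditional}, while the specified ones are handled by Theorem~\ref{thm:mainConditional}~(4) or~(5) after verifying the Legendre symbol conditions via quadratic reciprocity: $p \equiv 5 \pmod{8}$ gives $(2/p) = -1$, and $p \equiv 5 \pmod{12}$ together with $p \equiv 1 \pmod{4}$ gives $(3/p) = (p/3) = (2/3) = -1$; case~(4) combines both.

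To show $\Xp(\Q) = \emptyset$, Proposition~\ref{prop:cuspsQ} first eliminates rational cusps since $p > 17$, so it suffices to prove $Y_E^-(p)(\Q) = \emptyset$. If $(E', \phi)$ were such a rational point, the Frey--Mazur conjecture applied to~$E$ would yield a $\Q$-isogeny $\psi : E \to E'$ of some degree~$d$. Once we know that $\rhobar_{E,p}$ is irreducible (in particular $p \nmid d$), Lemmas~\ref{lem:irred} and~\ref{lem:isogeny} force $\phi$ and $\psi|_{E[p]}$ to have the same symplectic type, so $\phi$ is anti-symplectic if and only if $(d/p) = -1$; showing $(d/p) = 1$ then gives a contradiction. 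Cases~(1) and~(4) are immediate: in~(1) the hypothesis directly gives $(d/p) = 1$ for every isogeny, and $(p/p) = 0 \neq 1$ excludes $p$-isogenies, providing the required irreducibility; in~(4), the absence of non-trivial $\Q$-isogenies means $\psi$ is an isomorphism and $d = 1$.

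The crux, and the main obstacle, is cases~(2) and~(3). The plan is to establish the following key lemma: under the stated local non-abelian hypotheses at $\ell = 2$ (respectively $\ell = 3$), $\rhobar_{E,q}$ is irreducible for every odd prime $q \neq \ell$. The proof is a Borel-fitting argument. If $\rhobar_{E,q}(G_{\Q_\ell})$ lay in a Borel $B \leq \GL_2(\F_q)$, then the image of inertia~$\Phi$ would sit inside~$B$. For $e \in \{8,24\}$ at $\ell = 2$ or $e = 12$ at $\ell = 3$, $\Phi$ is non-abelian of order coprime to~$q$, so $\Phi$ has trivial intersection with the unipotent radical~$U$ of~$B$ and would embed into the abelian quotient $B/U$, contradicting non-abelianness. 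For the remaining wild cases $e = 4$ at $\ell = 2$ and $e = 3$ at $\ell = 3$, the inertia generator~$\sigma$ has order $e \in \{3,4\}$ and the intrinsic Frobenius--inertia relation $\tau \sigma \tau^{-1} = \sigma^{-1}$ from Proposition~\ref{prop:G} still holds in $\rhobar_{E,q}(G_{\Q_\ell})$; inside~$B$ one can diagonalise~$\sigma$ by a further change of basis within~$B$, and conjugation by an upper-triangular~$\tau$ preserves the span of the first basis vector, forcing the eigenvalue of~$\sigma$ on that vector to equal its own inverse and hence to have order dividing~$2$, contradicting $e \in \{3,4\}$.

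Applied with $q = p$, the lemma gives irreducibility of $\rhobar_{E,p}$; applied with any other odd prime $q \neq \ell, p$ it rules out $q$-isogenies of~$E$. Combined with $E[2](\Q) = \emptyset$ in case~(2)---which, since a $\Q$-rational $2$-isogeny would correspond to a $G_\Q$-stable order-$2$ subgroup of $E[2]$ generated by a rational $2$-torsion point, forbids any $2$-isogeny---or with the explicit absence of $2$- and $3$-isogenies in case~(3), we conclude that $E$ has no non-trivial $\Q$-isogeny. Therefore $\psi$ must be an isomorphism, $d = 1$, and $\phi$ must be symplectic, the desired contradiction.
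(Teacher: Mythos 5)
Your overall architecture matches the paper's proof closely: the local analysis (archimedean place via Theorem~\ref{thm:real}, the observation that every case forces $p \equiv 1 \pmod 4$, the Legendre-symbol bookkeeping feeding into Theorems~\ref{thm:mainNotConditional} and~\ref{thm:mainConditional}) is the same, and the global argument via Proposition~\ref{prop:cuspsQ}, Frey--Mazur, and Lemma~\ref{lem:irred} is also the paper's. In cases (2) and (3) the paper likewise reduces everything to showing $E$ has no $\Q$-isogenies by exploiting the local structure at $\ell$, so your ``key lemma'' is the right idea and your wild-inertia argument (using $\tau\sigma\tau^{-1}=\sigma^{-1}$ to force $\sigma^2=\Id$ inside a Borel) is a legitimate variant of the paper's version (which instead notes the image is non-abelian with no element of order $q$, hence cannot lie in a Borel).

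There is, however, one genuine gap in your key lemma. For $e=24$ at $\ell=2$ you assert that $\Phi$ is ``non-abelian of order coprime to $q$'' for every odd $q \neq 2$; this is false for $q=3$, since $\Phi \cong \SL_2(\F_3)$ has order $24$. And this is precisely a case you need: in case (2) the hypotheses only exclude rational $2$-torsion, so a rational $3$-isogeny must still be ruled out (a $3$-isogenous curve would produce an anti-symplectic point on $Y_E^-(p)$ whenever $(3/p)=-1$, which happens for infinitely many $p \equiv 5 \pmod 8$). The repair is short: either observe that a Borel subgroup of $\GL_2(\F_3)$ has order $12 < 24$, so $\SL_2(\F_3)$ cannot embed in it, or argue as the paper does that for $e \in \{8,24\}$ the inertia action on $E[q]$ is already irreducible for all $q \geq 3$ (any faithful two-dimensional representation of $Q_8$ or $\SL_2(\F_3)$ in odd characteristic is irreducible), which handles $q=3$ uniformly. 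With that single case patched, your proof is complete and is essentially the paper's.
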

\begin{proof}
Let $E$ be as in the statement.

The fact that $X_{E}^-(p)(\R) \neq \emptyset$ and $\Xp(\Q_\ell) \neq \emptyset$ for all primes~$\ell$ follows from Theorems~\ref{thm:MAIN} and~\ref{thm:l=p}. Note that since $p \equiv 1 \pmod{4}$, we have $X_{E}^-(p)(\Q_\ell) \neq \emptyset$ for all good primes $\ell \nmid N_E$.
Moreover, by assumption, the primes $\ell \mid N_E$ without a specified reduction type do not belong to Table~\ref{table:main}. This is because $p \equiv 1 \pmod{4}$ gives $(3/p) = (p/3)$ and if $p \equiv 5 \pmod{8}$ we have $(2/p) = -1$; moreover, $(p/3) = -1$ precisely for $p \equiv 2 \pmod{3}$. 

Note that, in case (4), $E$ has no $\Q$-isogenies by assumption.
We claim the same is true for cases (2) and (3). Indeed, if $e=4$ and $\tilde{c}_4\equiv 5\tilde{\Delta} \pmod{8}$, then for all primes $q \geq 3$, the $q$-torsion is non-abelian by Theorem~\ref{thm:F}; further, it contains no element of order~$q$ by Proposition~\ref{prop:G} (1), hence $\rhobar_{E,q}(G_{\Q_2})$ is not contained in a Borel subgroup. Thus 
$\rhobar_{E,q}(G_\Q)$ is irreducible for all $q\geq3$. If $e\in \{8,24\}$, the restriction of $\rhobar_{E,q}$ to the inertia subgroup $I_2$ is irreducible for all primes $q\geq 3$, and in particular $\rhobar_{E,q}(G_\Q)$ shares the same property. This, together with the absence of rational $2$-torsion points, implies that $\rhobar_{E,q}(G_\Q)$ is irreducible for all primes $q$ in all of the cases above. Therefore, $E$ has no $\Q$-isogenies. 
A similar argument proves the claim for case $(3)$.

To complete the proof, we will show that $\Xp(\Q) = \emptyset$. We start by noting that Proposition~\ref{prop:cuspsQ} assures that the cusps of $\Xp$ are not defined over $\Q$. Thus, we are left with showing that $Y^-_E(p)(\Q) = \emptyset$. Assume there is a rational point $(E', \phi) \in Y^-_E(p)(\Q)$, where $E'/\Q$ is an elliptic curve and 
$\phi : E[p] \to E'[p]$ is an anti-symplectic $G_\Q$-isomorphism.

By the Frey--Mazur conjecture, there is a $\Q$-isogeny $\psi : E' \to E$ of degree~$d$. By our assumptions, it follows that  
 $(d/p)=1$; therefore, $\phi$ is symplectic by Lemma~\ref{lem:irred}, giving a contradiction.
\end{proof}

Observe that Corollary~\ref{cor:Frey-MazurSimple} is a direct consequence of part (1).

\section{The Diophantine equation \texorpdfstring{$x^3+b^3=Cz^p$}{x³+b³=Czᵖ}}\label{sec:33p}

Consider the generalized Fermat equation
\begin{equation} \label{E:GFE1}
  x^r + y^q =Cz^p,
\end{equation}
where $p,q,r > 2$ are primes and $C\in \Z_{>0}$.
A solution $(a,b,c) \in \Z^3$ of~\eqref{E:GFE1} is called {\it primitive} if $\gcd(a,b,c)=1$ and it is called {\it non-trivial} if $abc \neq 0$. The triple of exponents $(r,q,p)$ are called the {\it signature} of the equation.

The folklore expectation (\textit{Beal's Conjecture, Fermat-Catalan Conjecture}, etc.) is that when $p,q,r$ are large enough, the equation~\eqref{E:GFE1} has no primitive non-trivial solutions, and this is well known to be a consequence of the \textit{abc} conjecture.
Wiles' famous proof of Fermat's Last Theorem \cite{Wiles} opened up the door for tackling families of signatures $(r,q,p)$, where two of the exponents are fixed and one is varying.
This approach is called {\it the modular method} and works by contradiction, relying on the modularity of elliptic curves, and it can be summarized in a few steps: attaching a Frey curve, modularity, level lowering and elimination.
The
modular method is very powerful to prove the non-existence of solutions for large varying p
but often fails for small values of p. As an application of our main theorem, we will introduce
a new technique for the elimination step that applies for concrete small values of $ p$.

For a survey on many solved signatures using the modular method can be found in \cite{BCDY}.
In this section, we consider the signature $(3,3,p)$, more precisely, the generalized Fermat equation
\begin{equation} \label{eq:33p}
  x^3 + y^3 = Cz^p,
\end{equation}
where $p > 3$ is a prime and $C\in \Z_{>0}$. Many authors (see~\cite{Bruin, ChenSiksek,Freitas,Kraus33p}) have studied the solutions of \eqref{eq:33p} for $C=1$ and their combined results show that there are no primitive, non-trivial solutions for a set of prime exponents $p$ with density greater than $0.844$. Moreover, many values of $C\neq 1$ are tackled in \cite{BBF}. In what follows,  we will prove Theorem~\ref{thm:33p} via the modular method.

\begin{remark} Note that $5 \in S_1 \subset S_0$ where the $S_i$ are defined in~\cite[Introduction]{BBF}, hence the non-existence of solutions given by Theorem~\ref{thm:33p} does not follow from Theorem~1.4 in \loccit; note also  that Theorem~\ref{thm:33p} is not covered by Theorems~1.8 and~1.9 in \loccit $\:\:$Finally, observe that for a fixed $p$, the theorem covers 50\% of the integers~$\alpha$.
\end{remark}

\begin{remark}
Allowing the algorithm in~\cite{git} to run longer, we can easily increase the set of primes $p$ covered by Theorem~\ref{thm:33p}.
\end{remark}

\subsection{Frey Curve}\label{Frey33p}
Let $p > 5$ and suppose that $(a,b,c)\in \Z^3$ is a non-trivial, primitive solution to
\begin{equation}\label{eq:33p5}
x^3+y^3=5^\alpha z^p,
\end{equation}
where $\alpha$ is a positive integer. Without loss of generality, $ac$ is even and $b \equiv (-1)^{c+1} \pmod{4}$.
Following \cite{BBF}, we associate to such a solution a {\it Frey elliptic curve} $F = F^{(i)}_{a,b}$ of the shape
$$
F^{(0)}_{a,b} \; \; : \; \; y^2+xy=x^3 + \frac{3( b-a) +2}{8} x^2 + \frac{3(a+b)^2}{64} x + \frac{ 9 (b-a)(a+b)^2}{512},
$$
or
$$
 F^{(1)}_{a,b} \; \; : \; \;  y^2=x^3+3abx+b^3-a^3,
$$
depending on whether $c$ is even or odd, respectively. For future reference, we note that these are minimal models with discriminant given by
\[
  \Delta(F)=-2^{12i-8} 3^3 5^{2\alpha} c^{2p}.
\]
Moreover, Lemma 2.1 in \loccit\: gives
\begin{equation}\label{eq:cond33p}
N_{F}=\begin{cases}
               90 \, \mathcal{R}&\text{ if $c$ even, }b\equiv -1\text{ {\rm (mod} } 4), \; 
               \text{or}\\
               180 \, \mathcal{R}&\text{ if $c$ odd, } v_2(a)\geq 2\text{ and }b\equiv1\text{ {\rm (mod} } 4), \; \text{or}\\
               360 \, \mathcal{R}&\text{ if $c$ odd, } v_2(a)=1\text{ and }b\equiv1\text{ {\rm (mod} } 4),
              \end{cases}\end{equation}
where 
$\mathcal{R}$ denotes the product of the primes $\ell$ satisfying $ \ell \mid c$ and $\ell \nmid 30$.
\subsection{Modularity and level lowering}\label{sec:MLL}
Using standard modularity, irreducibility and level lowering results, one gets that, for $p \geq 17$, there exists a newform $f\in
S_2^{new}(N_{F}/\mathcal{R})$ (the space of weight $2$ cuspidal newforms for the congruence subgroup $\Gamma_0(N_{F}/\mathcal{R})$)
and a prime $\fp \mid p$ in the field of coefficients of~$f$ such that
\begin{equation} \label{iso2}
\rhobar_{F,p} \simeq \rhobar_{f,\fp} \simeq \rhobar_{E_f,p},
\end{equation}
where $\rhobar_{f,\fp}$ denotes the
modulo~$\fp$ representation attached to $f$. Further, the second isomorphism in~\eqref{iso2} follows from the Eichler-Shimura correspondence and the fact that all newforms at level $N_p:=N_{F}/\mathcal{R} \in \{90,180,360\}$ are rational (a quick consultation of the LMFDB~\cite{LMFDB} shows this), where $E_f$ denotes an elliptic curve (up to isogeny) associated with~$f$.
Now, further consultation of LMFDB, shows that we can assume that $E_f$ belongs to the following set
of elliptic curves  (given by their Cremona reference)
\begin{equation}\label{eq:Ef}
    \{ 90a4,90b2,90c2,180a2, 360a2, 360b2, 360c2, 360d2,360e2 \}.
\end{equation}

\subsection{Elimination}\label{sec:elimination}
It follows from the isomorphism~\eqref{iso2} that,
for all primes $\ell \nmid pN_p$,
\[
\begin{cases}
    a_{\ell}(E_f) \equiv a_{\ell}(F) \pmod{p}, \:\: \text{ if } \ell \nmid  \mathcal{R},\\
    a_\ell(E_f) \equiv \pm (\ell+1) \pmod{p}, \:\: \text{ if } \ell \mid  \mathcal{R},
    
\end{cases}
\]
and consequently, \[p\mid B_\ell(E_f):=((\ell+1)^2-a_\ell(E_f))\prod_{t\in S_\ell} (t-a_\ell(E_f))\] where
\[
S_\ell:=\{t\in \Z: \:\: t=a_\ell(F_{a,b}), \text{ for all values }a,b \in \F_\ell \text{ such that }F^{(i)}_{a,b} \text{ has good reduction}  \}.
\]

With {\tt Magma} we check that the quantity $B_7(E)$ eliminates the curves
$E \in \{180a2,360b2,360c2 \}$ for $p > 7$; furthermore, all the other curves satisfy $B_\ell(E_f) = 0$ for all primes $\ell < 500$ of good reduction.
To eliminate the remaining isogeny classes in the list, we will introduce a symplectic argument  using primes of good reduction. Concretely, we show that the symplectic information coming from the multiplicative prime $5$ conflicts with our Theorem~\ref{thm:MAIN} at a suitably chosen prime of good reduction $\ell$.

We will need the following slightly modified version of \cite[Proposition 6.1]{BBF}.

\begin{proposition}\label{prop:sym5}
    Let $(a,b,c)\in\Z^3$ be a primitive solution to \eqref{eq:33p} with $C=5^\alpha$,~$(\alpha/p)=-1$. Suppose that~\eqref{iso2} holds with $$E_f \in \{ 90a4,90b2,90c2,360a2,360d2,360e2 \}.$$
    Then $X_{E_f}^-(p)(\Q)\neq \emptyset$.
\end{proposition}
\begin{proof}
    We have $v_5(N_F) = v_5(N_{E_f}) = 1$, so $5$ is a prime of multiplicative
reduction of both $F$ and~$E_f$; further, we have $v_5(\Delta(E_f)) =2$ for all $E_f$ in the statement.
Moreover, we have that~$p \nmid \alpha$ and
$v_5(\Delta(F)) = 2\alpha + 2pv_5(c)$.
A direct application of
\cite[Proposition~2]{KO} at $\ell = 5$ implies that the Galois isomorphism \eqref{iso2} is anti-symplectic. Therefore, the Frey curve $F = F_{a,b}$ gives rise to a point on $X_{E_f}^-(p)(\Q)$.
\end{proof}
\begin{proof}[Proof of Theorem~\ref{thm:33p}]
We now put together all the steps of the modular method. Suppose that $(a,b,c)\in \Z^3$ is a non-trivial, primitive solution to \eqref{eq:33p5}. Then we attach the Frey elliptic curve $F = F^{(i)}_{a,b}$ given in Section~\ref{Frey33p}. By Section~\ref{sec:MLL}, this gives rise to the isomorphism~\eqref{iso2} where $E_f$ is one of the curves in Proposition~\ref{prop:sym5}.

The code provided in \cite{git} shows that for all primes $p$ in the statement and $$E\in \{ 90a4,90b2,90c2, 360a2,360d2,360e2\},$$ there exists an $\ell \neq p$ satisfying the conditions in first row of Table~\ref{table:main} and therefore
$X_E^-(p)(\Q_\ell)= \emptyset$ implies that $X_{E}^-(p)(\Q)=\emptyset,$
contradicting Proposition~\ref {prop:sym5} because $(\alpha/p)=-1$ by assumption.
\end{proof}

\end{document}